\theoremstyle{plain}
\newtheorem{theorem}{Theorem}[section]
\newtheorem{lemma}[theorem]{Lemma}
\newtheorem{corollary}[theorem]{Corollary}
\newtheorem{prop}[theorem]{Proposition}
\theoremstyle{definition}
\theoremstyle{remark}
\titleformat{\section}[runin]{\normalfont\bfseries}{\thesection .}{0.5em}{}
\titleformat{\subsection}[runin]{\normalfont\bfseries}{\thesubsection .}{0.5em}{}
\begin{document}

	\title{
		An Irreversible Investment Problem with Incomplete Information about Profitability
	}
	
	\author{Fabian Gierens\thanks{Trier University, Department IV - Mathematics}
		\hspace{1cm}  Berenice Anne Neumann\footnotemark[1]   }
	\maketitle

	\allowdisplaybreaks

	\begin{abstract}	
		We analyze an irreversible investment decision for a project which yields a flow of future operating profits given by a geometric Brownian motion with unknown drift. 
		In contrast to similar optimal stopping problems with incomplete information, the agent's payoff now depends directly on the unknown drift and not only indirectly through the underlying dynamics. 
		Hence, many standard arguments are not applicable. 
		Nonetheless, we show that it is optimal to invest in the project if the current profit level exceeds a threshold depending on the current belief for the true state of the unknown drift.
		These thresholds are described by a boundary function, for which we establish structural properties like monotonicity and continuity. 
		To prove these, we identify a central class of stopping times with useful features. Moreover, we characterize the boundary function as the unique solution of a nonlinear integral equation. 
		Building on this characterization we compute the boundary function numerically and investigate the value of information. \\
		
		\noindent\textbf{Keywords:} irreversible investment; incomplete information; nonlinear integral equation; optimal stopping \\
		
		\noindent\textbf{MSC Codes (2020):} 93C41, 60G40, 93E11

	\end{abstract}

%%%%%%%%%%%%%%%%%%%%%%%%%%%%%%%%%%%%%%%%%%%%%%%%%%%%%%%%%%%%%%%%%%%%%%%%%%%%%%%%%%%%%
\section{Introduction}
In many investment scenarios, decision makers have to decide if and when to take investment opportunities, but the decision cannot be easily altered or reversed once it is made. Noting that the economic conditions are dynamic and uncertain, the flexibility to choose the exact timing of the irreversible investment is valuable as it gives the possibility to wait for more favorable economic conditions. 
Hence, to identify the optimal investment time is a central problem in economic decision-making with a close link with optimal stopping \cite{DixitPindyck1994, McDonaldSiegel1986}.

In the present paper, we consider an irreversible investment problem where an agent can decide to invest in a project which afterwards generates an infinite flow of profits given by a geometric Brownian motion with unknown drift. As usual, the agent is risk-neutral and discounts future cash flows. In the case when the dynamics of these profits are completely known, it is optimal to stop once the profit process reaches a certain threshold, which can be calculated explicitly \cite[Sec. 6.1]{DixitPindyck1994}.
Here, we consider a variant with incomplete information where the average growth rate (drift) of the profits is not known but takes one of two potential values. In this case, the agent has an additional factor to include in her analysis: Besides the option to wait for more favorable conditions, she also gains information about the true drift of the profit process while waiting. 
Therefore, understanding how learning influences the decision-making process is of great interest when the economic perspective of investment projects is not known (e.g., technological innovations). 

Drift uncertainty has been investigated for many optimal stopping as well as stochastic control problems:
D\'{e}camps, Mariotti and Villeneuve \cite{DecampsMariottiVilleneuve2005} and Klein \cite{Klein2009} 
also investigate an investment problem, but in their case the investment generates a lump-sum payment, Ekström and Lu \cite{EkströmLu2011} and Ekström and Vaicenavicius \cite{EkströmVaicenavicius2016} analyze asset liquidation problems and Gapeev \cite{Gapeev2012} and Ekström and Vannest\r{a}l \cite{EkströmVannestal2019} consider American options.
Karatzas and Zhao \cite{KaratzasZhao2001} analyze a portfolio optimization problem, De Angelis \cite{DeAngelis2019} investigates an optimal dividend problem and Dammann and Ferrari \cite{DammannFerrari2023} consider optimal liquidation under price impact. 
Additionally, stopping games with drift uncertainty have been investigated, e.g.,  \cite{DeAngelisEkströmGlover2021,DeAngelisGensbittelVilleneuve2021}.
In all these papers the agent's payoff does not depend directly on the unknown drift but only indirectly through the dynamics of the payoff process.
In contrast, the payoff in our problem depends strongly on the unknown drift since the payoff of the investment is not only determined by the current profit level but rather by the subsequent growth of the profits.

As usual, also for our investment problem the investment decision only depends on the current profit level and the current belief on the true state of the unknown drift. More precisely, the optimal strategy is to stop as soon as the current profit exceeds a threshold depending on the current belief on the true state of the unknown drift. 
These thresholds are specified by a boundary function, which monotonically and continuously connects the two thresholds for the problems with known drift.
As usual for problems with incomplete information, to prove these results, we transform our problem into a two-dimensional Markovian optimal stopping problem with complete information by introducing the belief process.
Using standard arguments, we prove that a stopping time exists and that it is characterized by the boundary function. However, for further investigations of the boundary function standard arguments from the literature are not applicable since the payoff depends directly on both the profit and the belief process.
Instead, to derive structural properties like monotonicity and continuity of the boundary function, we introduce a special class of stopping times, which allow us to connect  the problems for different initial profit levels and initial beliefs.

Moreover, as for other multi-dimensional stopping problems (e.g., \cite{ ChristensenSalminen2018, DammannFerrari2022,DetempleKitapbayev2020}), we derive a nonlinear integral equation of which the boundary function is the unique solution. This allows us to introduce an iterative scheme that we solve numerically and which we use to investigate the value of information.
In the literature, the derivation of these integral equations is tailored to the specific problem specification. Our setting differs from these examples by exhibiting both dependent underlying processes and a non-convex payoff and value function.
Nonetheless, we can adapt their arguments to our setting and in particular establish the smooth fit conditions for the associated free boundary problem using the stopping times introduced before.
Finally, to prove that the boundary function is the unique solution of this integral equation, we rely on a result by Christensen et al. \cite{ChristensenCrocceMordeckiSalminen2019} and exploit the dependency of the profit and the belief process.

The paper is organized as follows. 
In Section~\ref{sec:Model} we provide the precise formulation of our investment problem with uncertain drift. 
In Section~\ref{sec:Transformation} we provide the Markovian reformulation of the problem and introduce the special class of stopping times. 
In Section~\ref{sec:Characterization} we derive central properties of the boundary function and thereafter characterize the boundary function by 
a nonlinear integral equation. 
Finally, Section~\ref{sec:Numerics} illustrates our findings with a numerical study.
	
%%%%%%%%%%%%%%%%%%%%%%%%%%%%%%%%%%%%%%%%%%%%%%%%%%%%%%%%%%%%%%%%%%%%%%%%%%%%%%%%%%%%%
\section{Model}
	\label{sec:Model}
	
	An agent has the option to invest in a risky project. 
	The investment is irreversible and entails sunk cost $I > 0$. 
	Once installed, the project generates an infinite flow of profits with no operating cost. 
	The profits follow a geometric Brownian motion with unknown drift $\mu_{\theta}$ and known volatility $\sigma > 0$, i.e.,
	\begin{displaymath}
		dX_t = \mu_\theta X_t dt + \sigma X_t dW_t, \quad X_0 = x,
	\end{displaymath} 
	where  $W = (W_t)_{t \ge 0}$ is a standard Brownian motion.
	The drift is determined by a random variable $\theta$, independent of the Brownian motion $W$ and with values in $\{0,1\}$ such that $\mathbb{P}(\theta=0) = 1- \pi$ and $\mathbb{P}(\theta=1)=\pi$. 
	The agent is risk-neutral and discounts future profits and costs at a discount rate $r > 0$. 
	As usual we assume $r > \max\{\mu_0,\mu_1\}$. 
	Moreover, without loss of generality, we assume that $\mu_0<\mu_1$.
	
	The agent observes the profit flow $X$ and decides when to invest in the project.
	Formally, this means that the agent chooses an $\mathcal{F}^X$-stopping time $\tau$, where $\mathcal{F}^X$ is the filtration generated by $X$. 
	The agent is allowed to choose $\tau = \infty$, which means that the agent does not invest in the project.
	Hence, the optimization problem can be summarized as 
	\begin{align}
		\label{eq:OSP-Initial}
		V(x,\pi) 
		&:= \sup_{\tau \, \mathcal{F}^X\text{-stopping time}} \mathbb{E} \left[ e^{-r\tau} \left( \int_0^\infty e^{-rs} X_{\tau+s} \, ds - I \right) \right].
	\end{align}
		
	The above set-up is modeled by the following probability space:
	We denote by $C(\mathbb{R_+})$ the set of all continuous functions mapping from $\mathbb{R}_+$ to $\mathbb{R}$. 
	We define $\Omega = [0,1] \times C(\mathbb{R_+})$ and equip it with the Borel $\sigma$-algebra $\mathcal{B}\left([0,1] \times  C(\mathbb{R_+}) \right)$ and 
	the probability measure $\mathbb{P}$ given by $\mathbb{P} = \mathbb{U} \otimes \mathbb{W}$, where $\mathbb{U}$ is the standard uniform distribution and $\mathbb{W}$ is the Wiener measure.
	For $\pi \in [0,1]$, we define the random variable $\theta^\pi$ by $\theta^\pi(u,w) = \mathbb{I}_{[0,\pi]}(u)$ for every $(u,w) \in [0,1] \times C(\mathbb{R_+})$.
	Moreover, for $t \ge 0$ we define the random variable $W_t$ by $W_t(u,w)= w(t)$ for every $(u,w) \in [0,1] \times C(\mathbb{R_+})$, which implies that $W = (W_t)_{t \ge 0}$ is a standard Brownian motion. 
	In this setting, for a given fixed initial profit level $x \in (0,\infty)$ and  prior probability $\pi \in [0,1]$, the profits are given by $X^{x,\pi} = \left( X^{x,\pi}_t \right)_{t \ge 0}$ with
	\begin{displaymath}
		X^{x,\pi}_t := x \exp \left( \left(\mu_{\theta^{\pi}}- \tfrac{1}{2} \sigma^2 \right) t + \sigma W_t \right).
	\end{displaymath}
	The augmented filtration generated by $X^{x,\pi}$ is denoted by $\mathcal{F}^{X^{x,\pi}}$ and the set of $\mathcal{F}^{X^{x,\pi}}$-stopping times is denoted by $\mathcal{T}^{x,\pi}$. 
	
	As in the case with full information (see \cite[Sec. 6.1]{DixitPindyck1994}), we can rewrite \eqref{eq:OSP-Initial} in terms of the current profits: 
	Let us fix $(x, \pi) \in (0,\infty) \times [0,1]$.	
	For $i \in \{0,1\}$, we have that $X^{x,i}$ is a geometric Brownian motion with drift $\mu_i$, which, by \cite[Sec. 6.1]{DixitPindyck1994}, implies that 
	\begin{displaymath}
		\mathbb{E} \left[ e^{-rt} \left( \int_0^\infty e^{-rs} X^{x,i}_{t+s} \, ds - I \right) \right] 
		= \mathbb{E} \left[ e^{-rt} \left( \frac{X^{x,i}_t}{r-\mu_i} - I \right) \right].
	\end{displaymath} 
	Since we can decompose $X^{x,\pi}$ as 
	\begin{align}
		\label{eq:XDecomposition}
		X^{x,\pi}_t = \mathbb{I}_{\{\theta^\pi = 0\}} X^{x,0}_t + \mathbb{I}_{\{\theta^\pi = 1\}} X^{x,1}_t, \quad t \ge 0,
	\end{align}
	we obtain for every  $t \ge 0$ that
	\begin{align}
		\label{eq:Derivation_ValueFunctionWithoutIntegral}
		\mathbb{E} \left[ e^{-rt} \left( \int_0^\infty e^{-rs} X^{x,\pi}_{t+s} \, ds - I \right) \right] 
		= \mathbb{E} \left[ e^{-rt} \left( \frac{X^{x,\pi}_t}{r-\mu_{\theta^\pi}} - I \right) \right]
	\end{align}
	and we arrive at
	\begin{equation}
		\label{eq:OSP-Start}
		V(x,\pi) = \sup_{\tau \in \mathcal{T}^{x,\pi}} \mathbb{E} \left[ e^{-r\tau} \left( \frac{X^{x,\pi}_\tau}{r-\mu_{\theta^\pi}} - I \right) \right].
	\end{equation}
	
%%%%%%%%%%%%%%%%%%%%%%%%%%%%%%%%%%%%%%%%%%%%%%%%%%%%%%%%%%%%%%%%%%%%%%%%%%%%%%%%%%%%%
\section{A Markovian Reformulation with Full Information}
	\label{sec:Transformation}
	
	In this section we prove that there is an optimal stopping time for the problem \eqref{eq:OSP-Start}. 
	Noting that the random variable $\theta^\pi$ is not observable, we face an optimal stopping problem with incomplete information, where the profit process $X^{x,\pi}$ conveys some (noisy) information about $\mu_{\theta^\pi}$. 
	As usual, we solve this problem by introducing a belief process, which summarizes the current knowledge about the unknown random variable $\theta^\pi$. 
	Applying filtering theory, we then transform the problem into a two-dimensional Markovian stopping problem with full information. 
	Using standard results, we obtain that a first entry time for this two-dimensional process is optimal. 
	Thereafter, we establish that the current belief is a function of the initial values $(x, \pi)$, the current time and the current level of profits. 
	This insight allows us to focus on a particular class of stopping times in the subsequent analysis. 
	As a preparation for the next section, we derive first structural properties of these stopping times and the value function.
	
	Let us fix the initial values $(x, \pi) \in (0, \infty) \times [0,1]$.
	As usual (e.g., \cite[Sec. 3.1]{DecampsMariottiVilleneuve2005}, \cite[p. 4]{EkströmLu2011}) we introduce the belief process $\Pi^\pi = (\Pi^\pi_t)_{t \ge 0}$ via
	\begin{displaymath}
		\Pi^\pi_t := \mathbb{P}(\theta^\pi=1|\mathcal{F}^{X^{x,\pi}}_t), \quad \Pi^\pi_0 := \pi,
	\end{displaymath} 
	which summarizes the current knowledge of the agent on $\theta^\pi$ up to time $t$. 
	By \cite[Theorems 7.12 and 9.1]{LiptserShiryaev1977} the two-dimensional process $(X^{x,\pi},\Pi^\pi)$ has the dynamics
	\begin{align}
		\label{eq:XPiDynamics}
		\begin{pmatrix} dX^{x,\pi}_t \\ d\Pi^\pi_t \end{pmatrix} 
		= \begin{pmatrix} \left[ \mu_0 + \Pi^\pi_t (\mu_1 - \mu_0) \right] X^{x,\pi}_t \\ 0 \end{pmatrix} \, dt
		+ \begin{pmatrix} \sigma X^{x,\pi}_t \\ \frac{\mu_1-\mu_0}{\sigma} \Pi^\pi_t (1-\Pi^\pi_t) \end{pmatrix} \, d\bar{W}^\pi_t,
	\end{align}
	where $\bar{W}^\pi$ is an $(\mathbb{P},\mathcal{F}^{X^{x,\pi}})$-Brownian motion. 
	By \cite[Lemma 3.1]{DecampsMariottiVilleneuve2005}, the belief process $\Pi^\pi$ lies in $[0,1]$~a.s., for $\pi \in (0,1)$ we have that $\inf \{t \ge 0: \Pi^\pi_t \notin (0,1)\} = \infty$~a.s.\ and for $\pi \in \{0,1\}$ we have $\Pi^0_t=0$ and $\Pi^1_t=1$ for all $t \ge 0$~a.s.\
	Additionally, using Lévy's upward theorem and $\mu_{\theta^\pi} - \frac{1}{2} \sigma^2  = \lim_{t \to \infty} \frac{\ln(X^{x,\pi}_t/x)}{t}$~a.s., we observe that $\Pi^\pi_t  \to \mathbb{P}(\theta^\pi=1|\mathcal{F}^{X^{x,\pi}}_\infty) = \theta^\pi $ a.s.\ for $t \to \infty$. 
	Using the belief process, we can rewrite the expected payoff associated to a stopping time $\tau \in \mathcal{T}^{x,\pi}$ as 
	\begin{align*}
		&\mathbb{E} \left[ e^{-r\tau} \left( \frac{X^{x,\pi}_\tau}{r-\mu_{\theta^\pi}} - I \right) \right] = \mathbb{E} \left[ \mathbb{E} \left[ e^{-r\tau} \left( \frac{X^{x,\pi}_\tau}{r-\mu_{\theta^\pi}} - I \right) \middle| \mathcal{F}^{X^{x,\pi}}_\tau \right] \right] \\
		&= \mathbb{E} \left[ e^{-r\tau} \left( \frac{X^{x,\pi}_\tau}{r-\mu_0} - I \right) \mathbb{P}(\theta^\pi = 0 | \mathcal{F}^{X^{x,\pi}}_\tau) + e^{-r\tau} \left( \frac{X_\tau^{x,\pi}}{r-\mu_1} - I \right)  \mathbb{P}(\theta^\pi = 1 | \mathcal{F}^{X^{x,\pi}}_\tau) \right] \\
		&= \mathbb{E} \left[ e^{-r\tau} \left( X^{x,\pi}_\tau \left( \frac{1-\Pi^\pi_\tau}{r-\mu_0} + \frac{\Pi^\pi_\tau}{r-\mu_1} \right) -I \right) \right].
	\end{align*} 
	Since $\tau$ is arbitrary, we obtain
	\begin{equation}
		\label{eq:OSP-FullInfo}
		V(x,\pi) = \sup_{\tau \in \mathcal{T}^{x,\pi}} \mathbb{E} \left[ e^{-r\tau} g(X^{x,\pi}_\tau,\Pi^\pi_\tau) \right],
	\end{equation}
	where the payoff function $g : (0,\infty) \times [0,1] \to \mathbb{R}$ is defined by
	\begin{displaymath}
		(\hat{x}, \hat{\pi}) \mapsto g(\hat{x},\hat{\pi}) := \hat{x} \left( \frac{1-\hat{\pi}}{r-\mu_0} + \frac{\hat{\pi}}{r-\mu_1} \right) - I.
	\end{displaymath} 
	The reformulation \eqref{eq:OSP-FullInfo} is now a standard two-dimensional Markovian optimal stopping problem with full information. 
	Hence, we obtain the following result using standard theory (the proof can be found in the Appendix~\ref{sec:appendix}):
	
	\begin{prop}
		\label{prop:existence_first_entry}
		The first entry time 
		$\tau_*(x,\pi) := \inf\left\{ t \ge 0: (X^{x,\pi}_t,\Pi^{\pi}_t) \in D \right\}$ with 
		\begin{align}
			\label{eq:StoppingSetD}
			D := \{(\hat{x},\hat{\pi}) \in (0,\infty) \times [0,1] : V(\hat{x},\hat{\pi}) = g(\hat{x},\hat{\pi}) \}
		\end{align} 
		is optimal for the optimal stopping problem \eqref{eq:OSP-FullInfo}.
	\end{prop}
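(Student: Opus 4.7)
The plan is to invoke the standard existence theorem for optimal stopping of strong Markov processes with continuous trajectories (cf.\ Peskir and Shiryaev, \emph{Optimal Stopping and Free-Boundary Problems}, Chapter~I). Three ingredients need to be verified: (i) sufficient integrability of the discounted reward, (ii) the strong Markov property of $(X^{x,\pi},\Pi^\pi)$, and (iii) enough regularity of $V$ to guarantee that the stopping region $D$ is closed.

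For (i) I would use the bound $g(\hat x,\hat\pi)\le \hat x/(r-\mu_1)$, valid for all $(\hat x,\hat\pi)\in(0,\infty)\times[0,1]$ since $\mu_0<\mu_1<r$. Combined with the standard estimate $\mathbb{E}\bigl[\sup_{t\ge 0} e^{-rt} X^{x,\pi}_t\bigr]<\infty$, obtained by conditioning on $\theta^\pi$ and applying a Doob-type inequality to the two resulting geometric Brownian motions whose drifts lie strictly below $r$, this shows that the reward process is of class~(D); the same bound furnishes a dominating integrable random variable used in the regularity step. Ingredient~(ii) is then immediate: \eqref{eq:XPiDynamics} is a time-homogeneous SDE on $(0,\infty)\times[0,1]$ with locally Lipschitz coefficients of linear growth, and $\Pi^\pi$ almost surely never leaves $[0,1]$, so $(X^{x,\pi},\Pi^\pi)$ is a strong Markov diffusion with continuous paths.

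For (iii), I would show that $V$ is continuous on $(0,\infty)\times[0,1]$ by a standard perturbation argument. Given $\varepsilon>0$ and an $\varepsilon$-optimal stopping time $\tau_\varepsilon$ for the starting point $(x,\pi)$, one transfers $\tau_\varepsilon$ to nearby initial data and exploits the continuous dependence of the explicit solutions of \eqref{eq:XPiDynamics} on the initial condition (through the driving Brownian motion $W$ and the random variable $\theta^\pi$) together with the domination from~(i) to pass to the limit in expectation; both semicontinuity inequalities then follow. Since $g$ is continuous and $V\ge g$ everywhere, this forces $D=\{V=g\}$ to be closed, so $\tau_*(x,\pi)$ is a genuine stopping time. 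The standard Snell envelope characterization then gives that $e^{-rt}V(X^{x,\pi}_t,\Pi^\pi_t)$ is the smallest right-continuous supermartingale dominating the reward, and that the first entry time into $D$ is optimal.

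The main obstacle is (iii), and specifically the fact that the filtration $\mathcal{F}^{X^{x,\pi}}$ formally depends on the starting pair $(x,\pi)$, so an $\varepsilon$-optimal stopping time for one initial condition is not \emph{a priori} admissible for a perturbed one. The standard remedy is to realize $\tau_\varepsilon$ as a functional of the underlying Brownian motion $W$ (for the $x$-dependence, where the filtration is insensitive to the scalar initial value due to the multiplicative structure of $X^{x,\pi}$) and, for the $\pi$-dependence, to pass to the innovation representation $\bar W^\pi$, in which the dynamics of $(X^{x,\pi},\Pi^\pi)$ take the common form \eqref{eq:XPiDynamics}; this makes the transfer of stopping times across the family of problems indexed by $(x,\pi)$ routine.
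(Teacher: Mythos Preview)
Your approach is sound in outline and reaches the same conclusion, but the paper takes a shorter route. Step~(i) coincides with the paper's argument: it bounds $|g|\le X^{x,1}_t/(r-\mu_1)+I$ and computes $\mathbb{E}\bigl[\sup_{t\ge 0} e^{-rt}X^{x,1}_t\bigr]$ explicitly via a first-passage formula from Borodin--Salminen. The divergence is in~(iii). The paper does not prove any regularity of $V$ at this stage: instead it augments the state by time, considers the Markov process $(s+t,X^{x,\pi}_t,\Pi^\pi_t)$ with undiscounted continuous payoff $\tilde g(t,\hat x,\hat\pi)=e^{-rt}g(\hat x,\hat\pi)$, and invokes Shiryaev's existence theorem (\emph{Optimal Stopping Rules}, Ch.~3, Theorem~3(2)) directly. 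That theorem already delivers optimality of the first entry into $\tilde D=\{\tilde V=\tilde g\}$; the only remaining step is the observation $\tilde D=[0,\infty)\times D$, which is immediate from $\tilde V(t,\hat x,\hat\pi)=e^{-rt}V(\hat x,\hat\pi)$.

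Your route instead establishes continuity of $V$ by hand. This works, and in fact lower semicontinuity of $V$ already suffices for $D=\{V\le g\}$ to be closed, which would spare you the harder upper-semicontinuity half. The delicate point you flag is genuine: the innovation $\bar W^\pi$ itself depends on $\pi$, so ``transferring'' a stopping time is not literally possible on the original space. The clean implementation of your idea is to invoke weak (or pathwise) uniqueness for \eqref{eq:XPiDynamics} and realize all $(X^{x,\pi},\Pi^\pi)$ on a single canonical space driven by one Brownian motion, after which continuous dependence on the initial condition and Fatou-type arguments give lsc of $V$; your paragraph is close to this but conflates the two noises $W$ and $\bar W^\pi$. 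The paper's approach buys brevity and avoids this technicality entirely; yours would yield continuity of $V$ as a by-product, which the paper only establishes later, by different means, in Lemma~\ref{lem:ValueFunctionLipschitz}.
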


	Noting that the process $S_t^\pi:= \ln \left( X_t^{x, \pi}/x \right) = \left( \mu_{\theta^\pi}-\frac{1}{2}\sigma^2 \right) t + \sigma W_t$ is a Brownian motion with uncertain drift, we obtain the following simple representation of the current belief $\Pi^{\pi}_t$ using the Wonham filter \cite[Eq. (5)]{Wonham1964} (the proof can be found in Appendix~\ref{sec:appendix}):
	
	\begin{prop}
		\label{prop:PiExplicitSolution}
		We have $\Pi^\pi_t = f(t,x,\pi,X^{x,\pi}_t)$ for all $t \ge 0$, where $f: [0, \infty) \times (0, \infty) \times[0,1] \times (0,\infty) \rightarrow [0,1]$ is given by
		\begin{displaymath}
			(t,\hat{x},\hat{\pi},y) \mapsto f(t,\hat{x},\hat{\pi},y) := \frac{ \hat{\pi} \left( \frac{y}{\hat{x}} e^{\frac{1}{2}(\sigma^2-\mu_1-\mu_0)t} \right)^{\frac{\mu_1-\mu_0}{\sigma^2}} }{ 1-\hat{\pi} + \hat{\pi} \left( \frac{y}{\hat{x}} e^{\frac{1}{2}(\sigma^2-\mu_1-\mu_0)t} \right)^{\frac{\mu_1-\mu_0}{\sigma^2}} }.
		\end{displaymath}
	\end{prop}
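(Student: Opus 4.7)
The plan is to apply a static Bayes rule (a specialization of the Wonham filter, since $\theta^\pi$ is a constant random variable rather than a switching Markov chain) to the log-return process $S^\pi_t = \ln(X^{x,\pi}_t/x) = (\mu_{\theta^\pi} - \tfrac12\sigma^2)t + \sigma W_t$. First, I would observe that the map $y \mapsto \ln(y/x)$ is a homeomorphism between $(0,\infty)$ and $\mathbb{R}$, so the augmented filtration $\mathcal{F}^{X^{x,\pi}}$ coincides with the augmented filtration $\mathcal{F}^{S^\pi}$, and in particular $\Pi^\pi_t = \mathbb{P}(\theta^\pi = 1 \mid \mathcal{F}^{S^\pi}_t)$.

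Second, I would identify the conditional distribution of $S^\pi_t$: under the event $\{\theta^\pi = i\}$, the process $S^\pi$ is a Brownian motion with drift $a_i := \mu_i - \tfrac12\sigma^2$ and volatility $\sigma$, so $S^\pi_t$ is (conditionally) Gaussian with mean $a_i t$ and variance $\sigma^2 t$. Because $\theta^\pi$ is independent of $W$, a standard Bayes rule gives
\begin{equation*}
\Pi^\pi_t = \frac{\pi\,\varphi_1(S^\pi_t,t)}{(1-\pi)\,\varphi_0(S^\pi_t,t) + \pi\,\varphi_1(S^\pi_t,t)},
\end{equation*}
where $\varphi_i(s,t)$ denotes the $\mathcal{N}(a_i t, \sigma^2 t)$-density. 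Dividing numerator and denominator by $\varphi_0(S^\pi_t,t)$ reduces the task to computing the likelihood ratio $\varphi_1/\varphi_0$.

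Third, I would carry out the straightforward Gaussian computation: expanding $(s-a_1 t)^2 - (s-a_0 t)^2 = -2(a_1-a_0)s t + (a_1^2-a_0^2)t^2$ and using $a_1-a_0 = \mu_1-\mu_0$ together with $a_1+a_0 = \mu_1+\mu_0-\sigma^2$ yields
\begin{equation*}
\frac{\varphi_1(s,t)}{\varphi_0(s,t)} = \exp\!\left(\frac{\mu_1-\mu_0}{\sigma^2}\left(s + \tfrac12(\sigma^2 - \mu_1 - \mu_0)t\right)\right) = \left(e^{s}\, e^{\frac12(\sigma^2-\mu_1-\mu_0)t}\right)^{\frac{\mu_1-\mu_0}{\sigma^2}}.
\end{equation*}
Substituting $s = S^\pi_t = \ln(X^{x,\pi}_t/x)$, so that $e^s = X^{x,\pi}_t/x$, and plugging the resulting expression into the Bayes formula produces exactly $f(t,x,\pi,X^{x,\pi}_t)$.

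The only remaining item is the degenerate case $\pi \in \{0,1\}$, where the Bayes formula and the explicit $f$ both trivially return the constant value $\pi$, consistent with $\Pi^0 \equiv 0$ and $\Pi^1 \equiv 1$ noted earlier. There is no substantial obstacle here; the main subtlety is purely bookkeeping in the Gaussian exponents, which must be arranged carefully so that the log-likelihood ratio matches the particular form $\bigl(\tfrac{y}{\hat{x}}\,e^{\frac12(\sigma^2-\mu_1-\mu_0)t}\bigr)^{(\mu_1-\mu_0)/\sigma^2}$ appearing in the statement.
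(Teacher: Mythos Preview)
Your proposal is correct and follows essentially the same route as the paper: both compute the likelihood ratio for the two drift hypotheses, simplify the Gaussian exponents, and substitute $e^{S^\pi_t}=X^{x,\pi}_t/x$. One point to tighten: applying Bayes with the \emph{marginal} densities $\varphi_i(S^\pi_t,t)$ literally gives $\mathbb{P}(\theta^\pi=1\mid S^\pi_t)$ rather than $\mathbb{P}(\theta^\pi=1\mid\mathcal{F}^{S^\pi}_t)$, so you should remark that these coincide because for constant-drift Brownian motion the Girsanov path likelihood depends only on the endpoint (equivalently, $S^\pi_t$ is a sufficient statistic for the drift); the paper handles this by citing the Wonham filter formula directly, whose terms $E_i$ are precisely the path likelihoods, and then computes $E_1E_0^{-1}$---which agrees with your $\varphi_1/\varphi_0$.
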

	
	In light of the last result, we can rewrite the stopping time $\tau_*(x,\pi)$ from Proposition~\ref{prop:existence_first_entry} as
	\begin{align*}
		\tau_*(x, \pi) = \inf\left\{ t \ge 0 : \left(X^{x,\pi}_t, f(t,x,\pi,X^{x,\pi}_t) \right) \in D\right\}.
	\end{align*}
	This representation is the cornerstone of our forthcoming analysis. 
	Namely, we will focus on stopping times of the form
	\begin{equation}
		\label{eq:Form_Stopping_Times}
		\tau^{x,\pi}_{\hat{\pi}} := \inf\left\{ t \ge 0 : \left(X^{x,\pi}_t, f(t,x,\hat{\pi},X^{x,\pi}_t) \right) \in D\right\},
	\end{equation}
	where $\hat{\pi} \in [0,1]$. 
	As a first step we note that these stopping times have some useful properties: 
	For any $\hat{\pi} \in [0,1]$ the stopping time $\tau^{x,\pi}_{\hat{\pi}}$ is an $\mathcal{F}^{X^{x,\pi}}$-stopping time and we have $\tau^{x,\pi}_{\pi} = \tau_*(x,\pi)$. 
	Moreover, we can decompose these stopping times as
	\begin{equation}
		\label{eq:TauDecomposition}
		\tau^{x,\pi}_{\hat{\pi}} =  \mathbb{I}_{\{\theta^\pi = 0\}} \tau^{x,0}_{\hat{\pi}} + \mathbb{I}_{\{\theta^\pi = 1\}} \tau^{x,1}_{\hat{\pi}}.
	\end{equation}
	Given $\hat{x} \in (0,\infty)$, we have $X^{\hat{x},\pi}_t = \frac{\hat{x}}{x} X^{x,\pi}_t$ for all $t \ge 0$~a.s.\ and, hence, $\tau^{\hat{x},\pi}_{\hat{\pi}}$ is also an $\mathcal{F}^{X^{x,\pi}}$-stopping time.
	Moreover, for every $\tau \in \mathcal{T}^{\hat{x},\pi}$, we obtain
	\begin{equation}
		\label{eq:xShift}
		\mathbb{E} \left[ e^{-r\tau} g(X^{\hat{x},\pi}_\tau, \Pi^\pi_\tau) \right]
		= \frac{\hat{x}}{x} \mathbb{E} \left[ e^{-r\tau} g(X^{x,\pi}_\tau,\Pi^\pi_\tau) \right] + \left( \frac{\hat{x}}{x} -1 \right) \mathbb{E} \left[ e^{-r\tau} I \right].
	\end{equation}
	
	Relying on stopping times of the form \eqref{eq:Form_Stopping_Times} and the observations made so far, we obtain the following preliminary results, which will be central for the subsequent analysis:
	
	\begin{lemma}
		\label{lemma:PropertyOfStoppingSet}
		Let  $\hat{x} \in (x,\infty)$. Then the stopping set $D$ from \eqref{eq:StoppingSetD} satisfies
		\begin{displaymath}
			(x,\pi) \in D \Rightarrow (\hat{x}, \pi) \in D.
		\end{displaymath}
	\end{lemma}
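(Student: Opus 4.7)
The inclusion $V(\hat{x}, \pi) \ge g(\hat{x}, \pi)$ is immediate by taking $\tau = 0$ in \eqref{eq:OSP-FullInfo}, so the substance of the claim is to prove the reverse inequality $V(\hat{x}, \pi) \le g(\hat{x}, \pi)$. My plan is to control the expected payoff of an arbitrary $\tau \in \mathcal{T}^{\hat{x},\pi}$ through the linearity identity \eqref{eq:xShift}. Since $X^{\hat{x},\pi}_t = (\hat{x}/x)\, X^{x,\pi}_t$ for all $t \ge 0$ almost surely, the filtrations $\mathcal{F}^{X^{\hat{x},\pi}}$ and $\mathcal{F}^{X^{x,\pi}}$ coincide, so every $\tau \in \mathcal{T}^{\hat{x},\pi}$ also belongs to $\mathcal{T}^{x,\pi}$ and the expressions $\mathbb{E}[e^{-r\tau} g(X^{x,\pi}_\tau, \Pi^\pi_\tau)]$ and $\mathbb{E}[e^{-r\tau} I]$ are well-defined.

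For such a $\tau$, I intend to combine \eqref{eq:xShift} with the two bounds
\begin{displaymath}
\mathbb{E}\!\left[ e^{-r\tau} g(X^{x,\pi}_\tau, \Pi^\pi_\tau) \right] \le V(x,\pi) \quad \text{and} \quad \mathbb{E}\!\left[ e^{-r\tau} I \right] \le I,
\end{displaymath}
which follow from the definition of $V$ and from $r > 0$ together with $\tau \ge 0$, respectively. The crucial sign check is that $\hat{x} > x$ forces $\hat{x}/x - 1 > 0$, so both bounds enter with positive coefficients on the right-hand side of \eqref{eq:xShift} and are applied in the correct direction, producing
\begin{displaymath}
\mathbb{E}\!\left[ e^{-r\tau} g(X^{\hat{x},\pi}_\tau, \Pi^\pi_\tau) \right] \le \frac{\hat{x}}{x}\, V(x,\pi) + \left( \frac{\hat{x}}{x} - 1 \right) I.
\end{displaymath}

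Finally, I invoke the hypothesis $(x,\pi) \in D$ to replace $V(x,\pi)$ by $g(x,\pi) = x\bigl(\tfrac{1-\pi}{r-\mu_0} + \tfrac{\pi}{r-\mu_1}\bigr) - I$. A short algebraic simplification (the two $I$-contributions combine to $-I$, while the factor $x$ in $g(x,\pi)$ cancels with $\hat{x}/x$) shows that the right-hand side equals $g(\hat{x}, \pi)$. Taking the supremum over $\tau \in \mathcal{T}^{\hat{x},\pi}$ then yields $V(\hat{x},\pi) \le g(\hat{x},\pi)$. There is no substantial obstacle: the argument reduces to an algebraic manipulation of \eqref{eq:xShift}, and the only point that requires care is the sign of $\hat{x}/x - 1$, which is positive precisely because of the assumption $\hat{x} > x$ and is what aligns the two upper bounds in the required direction.
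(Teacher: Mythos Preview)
Your proof is correct and follows essentially the same approach as the paper: both apply the scaling identity \eqref{eq:xShift}, bound $\mathbb{E}[e^{-r\tau} g(X^{x,\pi}_\tau,\Pi^\pi_\tau)]$ by $V(x,\pi)$ and $\mathbb{E}[e^{-r\tau}I]$ by $I$, and simplify using $V(x,\pi)=g(x,\pi)$. The only cosmetic difference is that the paper applies \eqref{eq:xShift} directly to the optimal stopping time $\tau^{\hat{x},\pi}_\pi$ (so that $V(\hat{x},\pi)$ appears immediately on the left), whereas you work with an arbitrary $\tau\in\mathcal{T}^{\hat{x},\pi}$ and take the supremum at the end; the content is identical.
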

	\begin{proof}
		Assume that $(x,\pi) \in D$, i.e., we have $V(x,\pi) = g(x,\pi)$. 
		Using \eqref{eq:xShift}, we obtain
		\begin{align*}
			V(\hat{x}, \pi)
			&= \mathbb{E} \left[ e^{-r\tau^{\hat{x},\pi}_\pi} g(X^{\hat{x},\pi}_{\tau^{\hat{x},\pi}_\pi},\Pi^\pi_{\tau^{\hat{x},\pi}_\pi}) \right] 
			= \frac{\hat{x}}{x} \mathbb{E} \left[ e^{-r\tau^{\hat{x},\pi}_\pi} g(X^{x,\pi}_{\tau^{\hat{x},\pi}_\pi},\Pi^\pi_{\tau^{\hat{x},\pi}_\pi}) \right] + \left( \frac{\hat{x}}{x} -1 \right) \mathbb{E} \left[ e^{-r\tau^{\hat{x},\pi}_\pi} I \right] \\
			&\le  \frac{\hat{x}}{x} V(x,\pi) +  \left( \frac{\hat{x}}{x} -1\right) I 
			= \frac{\hat{x}}{x} g(x,\pi) +  \left( \frac{\hat{x}}{x} -1\right) I 
			= g(\hat{x}, \pi).
		\end{align*}
		Hence, we have $V(\hat{x}, \pi) = g(\hat{x}, \pi)$, which implies  $(\hat{x},\pi) \in D$.
	\end{proof}
	
	\begin{lemma}
		\label{lem:EssentialIdentity}
		For $(\hat{x},\hat{\pi}) \in (0,\infty) \times [0,1]$, we have
		\begin{displaymath}
			\mathbb{E} \left[ e^{-r\tau^{\hat{x},\pi}_{\hat{\pi}}} \left( \frac{X^{x,\pi}_{\tau^{\hat{x},\pi}_{\hat{\pi}}}}{r-\mu_{\theta^{\pi}}} - I \right) \right] = (1-\pi) \, \mathbb{E} \left[ e^{-r\tau^{\hat{x},0}_{\hat{\pi}}} \left( \frac{X^{x,0}_{\tau^{\hat{x},0}_{\hat{\pi}}}}{r-\mu_0} - I \right) \right] + \pi \, \mathbb{E} \left[ e^{-r\tau^{\hat{x},1}_{\hat{\pi}}} \left( \frac{X^{x,1}_{\tau^{\hat{x},1}_{\hat{\pi}}}}{r-\mu_1} - I \right) \right].
		\end{displaymath}
	\end{lemma}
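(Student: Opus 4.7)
The plan is to split the expectation on the partition $\{\theta^\pi = 0\} \cup \{\theta^\pi = 1\}$, then use the already-established decompositions \eqref{eq:XDecomposition} and \eqref{eq:TauDecomposition} to rewrite every $\pi$-labelled quantity in its $0$- and $1$-versions on the respective events. Finally, a standard independence argument separates the indicator from the remainder.

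Concretely, I would first write
\begin{align*}
\mathbb{E} \left[ e^{-r\tau^{\hat{x},\pi}_{\hat{\pi}}} \left( \frac{X^{x,\pi}_{\tau^{\hat{x},\pi}_{\hat{\pi}}}}{r-\mu_{\theta^{\pi}}} - I \right) \right]
&= \mathbb{E} \left[ \mathbb{I}_{\{\theta^\pi = 0\}} e^{-r\tau^{\hat{x},\pi}_{\hat{\pi}}} \left( \frac{X^{x,\pi}_{\tau^{\hat{x},\pi}_{\hat{\pi}}}}{r-\mu_0} - I \right) \right] \\
&\quad + \mathbb{E} \left[ \mathbb{I}_{\{\theta^\pi = 1\}} e^{-r\tau^{\hat{x},\pi}_{\hat{\pi}}} \left( \frac{X^{x,\pi}_{\tau^{\hat{x},\pi}_{\hat{\pi}}}}{r-\mu_1} - I \right) \right].
\end{align*}
On the event $\{\theta^\pi = i\}$, equation \eqref{eq:XDecomposition} yields $X^{x,\pi}_t = X^{x,i}_t$ for all $t\ge 0$ (and the same with $\hat{x}$ in place of $x$), which combined with the definition \eqref{eq:Form_Stopping_Times} of the stopping times gives $\tau^{\hat{x},\pi}_{\hat{\pi}} = \tau^{\hat{x},i}_{\hat{\pi}}$ on this event; this is also the content of \eqref{eq:TauDecomposition}. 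Substituting these identities into the two indicator terms turns the $\pi$-superscripts into $i$-superscripts throughout.

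The remaining step is to pull the indicators out. By construction of the probability space, the processes $X^{x,0}$, $X^{x,1}$, $X^{\hat{x},0}$, $X^{\hat{x},1}$ are deterministic functionals of the Brownian motion $W$, so the stopping times $\tau^{\hat{x},0}_{\hat{\pi}}$ and $\tau^{\hat{x},1}_{\hat{\pi}}$ as well as the random variables $e^{-r\tau^{\hat{x},i}_{\hat{\pi}}}(X^{x,i}_{\tau^{\hat{x},i}_{\hat{\pi}}}/(r-\mu_i) - I)$ are $\sigma(W)$-measurable. Since $\theta^\pi$ is independent of $W$, we can factorize
\[
\mathbb{E} \left[ \mathbb{I}_{\{\theta^\pi = i\}} e^{-r\tau^{\hat{x},i}_{\hat{\pi}}} \left( \frac{X^{x,i}_{\tau^{\hat{x},i}_{\hat{\pi}}}}{r-\mu_i} - I \right) \right] = \mathbb{P}(\theta^\pi = i) \, \mathbb{E} \left[ e^{-r\tau^{\hat{x},i}_{\hat{\pi}}} \left( \frac{X^{x,i}_{\tau^{\hat{x},i}_{\hat{\pi}}}}{r-\mu_i} - I \right) \right]
\]
for $i \in \{0,1\}$, and plugging in $\mathbb{P}(\theta^\pi=0)=1-\pi$ and $\mathbb{P}(\theta^\pi=1)=\pi$ yields the claim.

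No genuine obstacle is expected: the only subtlety is the verification that on $\{\theta^\pi = i\}$ the stopping time $\tau^{\hat{x},\pi}_{\hat{\pi}}$ really collapses to $\tau^{\hat{x},i}_{\hat{\pi}}$, but this is already recorded in \eqref{eq:TauDecomposition}. Everything else is a bookkeeping exercise in conditioning on $\theta^\pi$ and invoking independence.
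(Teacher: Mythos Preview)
Your proposal is correct and follows essentially the same route as the paper: both arguments invoke the decompositions \eqref{eq:XDecomposition} and \eqref{eq:TauDecomposition} and then condition on $\theta^\pi$ (you phrase this via indicator splitting plus independence of $\theta^\pi$ and $W$, the paper via the tower property $\mathbb{E}[\,\cdot\,]=\mathbb{E}[\mathbb{E}[\,\cdot\,|\theta^\pi]]$), before inserting $\mathbb{P}(\theta^\pi=0)=1-\pi$ and $\mathbb{P}(\theta^\pi=1)=\pi$.
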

	\begin{proof}
		Utilizing the decompositions \eqref{eq:XDecomposition} and \eqref{eq:TauDecomposition}, we obtain 
		\begin{align*}
			&\mathbb{E} \left[ e^{-r\tau^{\hat{x},\pi}_{\hat{\pi}}} \left( \frac{X^{x,\pi}_{\tau^{\hat{x},\pi}_{\hat{\pi}}}}{r-\mu_{\theta^{\pi}}} - I \right) \right]  
			=\mathbb{E} \left[  \mathbb{E} \left[ e^{-r\tau^{\hat{x},\pi}_{\hat{\pi}}} \left( \frac{X^{x,\pi}_{\tau^{\hat{x},\pi}_{\hat{\pi}}}}{r-\mu_{\theta^{\pi}}} - I \right)\middle| \theta^\pi \right] \right] \\
			&= \mathbb{P}(\theta^\pi = 0) \mathbb{E} \left[ e^{-r\tau^{\hat{x},0}_{\hat{\pi}}} \left( \frac{X^{x,0}_{\tau^{\hat{x},0}_{\hat{\pi}}}}{r-\mu_0} - I \right) \right] + \mathbb{P}(\theta^\pi = 1) \mathbb{E} \left[ e^{-r\tau^{\hat{x},1}_{\hat{\pi}}} \left( \frac{X^{x,1}_{\tau^{\hat{x},1}_{\hat{\pi}}}}{r-\mu_1} - I \right) \right]. 
		\end{align*}
		Noting that $\mathbb{P}(\theta^\pi=0) =1-\pi$ and $\mathbb{P}(\theta^\pi=1) = \pi$, we arrive at the desired representation.	
	\end{proof}

	\begin{corollary}
		\label{cor:UpperBoundValueFct}
		We have $V(x,\pi) \le (1-\pi) V(x,0) + \pi V(x,1)$.		
	\end{corollary}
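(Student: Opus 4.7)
The idea is to apply Lemma~\ref{lem:EssentialIdentity} to the optimal stopping time for $V(x,\pi)$ itself. Recall from the discussion following \eqref{eq:Form_Stopping_Times} that $\tau^{x,\pi}_\pi = \tau_*(x,\pi)$, which is optimal for \eqref{eq:OSP-FullInfo} by Proposition~\ref{prop:existence_first_entry}. Combined with \eqref{eq:Derivation_ValueFunctionWithoutIntegral} this yields
\begin{displaymath}
V(x,\pi) = \mathbb{E}\Bigl[e^{-r\tau^{x,\pi}_\pi}\Bigl(\tfrac{X^{x,\pi}_{\tau^{x,\pi}_\pi}}{r-\mu_{\theta^\pi}} - I\Bigr)\Bigr].
\end{displaymath}
Applying Lemma~\ref{lem:EssentialIdentity} with $\hat{x}=x$ and $\hat{\pi}=\pi$ then decomposes this quantity into two pieces weighted by $1-\pi$ and $\pi$, involving the stopping times $\tau^{x,0}_\pi$ and $\tau^{x,1}_\pi$.

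The key remaining observation is that these two expectations are in fact admissible candidate values for $V(x,0)$ and $V(x,1)$. Indeed, by \eqref{eq:Form_Stopping_Times}, $\tau^{x,0}_\pi$ is defined purely in terms of $X^{x,0}$ and the deterministic measurable function $f$, hence is an $\mathcal{F}^{X^{x,0}}$-stopping time, and therefore belongs to $\mathcal{T}^{x,0}$; analogously, $\tau^{x,1}_\pi \in \mathcal{T}^{x,1}$. Since $\theta^0 \equiv 0$ and $\theta^1 \equiv 1$ almost surely, the two expectations produced by Lemma~\ref{lem:EssentialIdentity} are exactly the payoff functionals in \eqref{eq:OSP-Start} for the problems $V(x,0)$ and $V(x,1)$ evaluated along $\tau^{x,0}_\pi$ and $\tau^{x,1}_\pi$, so each is bounded above by the corresponding supremum $V(x,0)$ and $V(x,1)$. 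Putting this upper bound into each summand of the decomposition yields the claim.

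I do not expect any real obstacle here: the heavy lifting has already been done in Lemma~\ref{lem:EssentialIdentity}, which is precisely tailored to convert an expectation under the mixed law of $\theta^\pi$ into a $\pi$-weighted average of expectations in the two pure regimes, while keeping the stopping times admissible for the pure problems. The corollary is thus a one-line consequence once one identifies each summand with a candidate value for $V(x,0)$ or $V(x,1)$.
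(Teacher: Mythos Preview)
Your proposal is correct and follows essentially the same approach as the paper: apply Lemma~\ref{lem:EssentialIdentity} with $\hat{x}=x$, $\hat{\pi}=\pi$ to the optimal stopping time $\tau^{x,\pi}_\pi$, then bound each of the two resulting expectations by $V(x,0)$ and $V(x,1)$ using that $\tau^{x,0}_\pi\in\mathcal{T}^{x,0}$ and $\tau^{x,1}_\pi\in\mathcal{T}^{x,1}$. The only minor remark is that the appropriate reference for the representation of $V(x,\pi)$ in terms of the stopping time is \eqref{eq:OSP-Start} rather than \eqref{eq:Derivation_ValueFunctionWithoutIntegral}, since the latter is stated only for deterministic times.
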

	\begin{proof}
		By Lemma~\ref{lem:EssentialIdentity}, we have
		\begin{align*}
			V(x,\pi) 
			&= (1-\pi) \mathbb{E} \left[ e^{-r \tau^{x,0}_\pi} \left( \frac{X^{x,0}_{\tau^{x,0}_\pi}}{r-\mu_0} - I \right) \right] + \pi \mathbb{E} \left[ e^{-r \tau^{x,1}_\pi} \left( \frac{X^{x,1}_{\tau^{x,1}_\pi}}{r-\mu_1} - I \right) \right] \\
			&\le (1-\pi) V(x,0) + \pi V(x,1). \qedhere
		\end{align*}
	\end{proof}
		
	With these preparations, we now show that the value function is increasing and locally Lipschitz continuous in both arguments.
	\begin{lemma}
		\label{lem:ValueFunctionIncreasing}
		The value function $V$ is increasing in both arguments.
	\end{lemma}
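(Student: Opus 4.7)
I would handle the two monotonicities separately. For monotonicity in $x$ the argument is direct: fix $\pi$ and $x_1 < x_2$; then $X^{x_2,\pi}_t = (x_2/x_1)X^{x_1,\pi}_t$ pointwise, so the two filtrations coincide and $\mathcal{T}^{x_1,\pi} = \mathcal{T}^{x_2,\pi}$, while Proposition~\ref{prop:PiExplicitSolution} shows that $\Pi^\pi_t$ depends on $X^{x,\pi}$ only through the ratio $X^{x,\pi}_t/x$ and is therefore independent of the initial level. Since $g$ is strictly increasing in its first argument, for any $\tau$ we have $g(X^{x_2,\pi}_\tau,\Pi^\pi_\tau) \ge g(X^{x_1,\pi}_\tau,\Pi^\pi_\tau)$ pointwise, and taking expectations and the supremum over $\tau$ yields $V(x_2,\pi) \ge V(x_1,\pi)$.

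For monotonicity in $\pi$ the filtrations $\mathcal{F}^{X^{x,\pi_1}}$ and $\mathcal{F}^{X^{x,\pi_2}}$ no longer agree, and I would route the comparison through Lemma~\ref{lem:EssentialIdentity}. Let $\pi_1 < \pi_2$ and set
\[ A_i(\hat{\pi}) := \mathbb{E}\!\left[e^{-r\tau^{x,i}_{\hat{\pi}}}\!\left(\frac{X^{x,i}_{\tau^{x,i}_{\hat{\pi}}}}{r-\mu_i} - I\right)\right], \qquad i \in \{0,1\}. \]
Optimality of $\tau^{x,\pi_1}_{\pi_1}$ (Proposition~\ref{prop:existence_first_entry}) and Lemma~\ref{lem:EssentialIdentity} give $V(x,\pi_1) = (1-\pi_1)A_0(\pi_1) + \pi_1 A_1(\pi_1)$. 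Using the translated candidate $\tau^{x,\pi_2}_{\pi_1} \in \mathcal{T}^{x,\pi_2}$ for $V(x,\pi_2)$ and applying Lemma~\ref{lem:EssentialIdentity} once more yields $V(x,\pi_2) \ge (1-\pi_2)A_0(\pi_1) + \pi_2 A_1(\pi_1)$. Subtracting,
\[ V(x,\pi_2) - V(x,\pi_1) \ge (\pi_2-\pi_1)\bigl(A_1(\pi_1) - A_0(\pi_1)\bigr), \]
so the proof reduces to the inequality $A_1(\hat{\pi}) \ge A_0(\hat{\pi})$ for every $\hat{\pi} \in [0,1]$.

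The hard part is this last inequality: it asserts that running the Bayesian-$\hat{\pi}$ stopping rule under drift $\mu_1$ yields at least as much in expectation as running it under drift $\mu_0$, but since $\tau^{x,0}_{\hat{\pi}}$ and $\tau^{x,1}_{\hat{\pi}}$ are the same path-functional evaluated on differently distributed processes, no naive pointwise comparison works. My plan is to couple $X^{x,0}$ and $X^{x,1}$ on a common Brownian motion via $X^{x,1}_t = X^{x,0}_t e^{(\mu_1-\mu_0)t}$; the pointwise bound $X^{x,1}_t/(r-\mu_1) - I \ge X^{x,0}_t/(r-\mu_0) - I$ for $t \ge 0$ first gives the auxiliary estimate $\mathbb{E}\bigl[e^{-r\tau^{x,0}_{\hat{\pi}}}\bigl(X^{x,1}_{\tau^{x,0}_{\hat{\pi}}}/(r-\mu_1) - I\bigr)\bigr] \ge A_0(\hat{\pi})$, and a Girsanov change of measure under which $X^{x,0}$ has the law of $X^{x,1}$ under $\mathbb{P}$ allows one to replace $\tau^{x,0}_{\hat{\pi}}$ by $\tau^{x,1}_{\hat{\pi}}$ in distribution, delivering $A_1(\hat{\pi}) \ge A_0(\hat{\pi})$ after tracking the Radon--Nikodym correction. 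An alternative, perhaps cleaner, route is to combine the coupling $\theta^{\pi_1} \le \theta^{\pi_2}$ on $\Omega = [0,1]\times C(\mathbb{R}_+)$ (which gives $X^{x,\pi_1} \le X^{x,\pi_2}$ pointwise and, because the Wonham filter $f$ in Proposition~\ref{prop:PiExplicitSolution} is increasing in both its belief and its price argument, also $\Pi^{\pi_1} \le \Pi^{\pi_2}$ pointwise) with the smallest-$r$-excessive-majorant characterization of $V$, so that the pointwise monotonicity of $g$ propagates through the Bellman iteration and is inherited by $V$ in the limit.
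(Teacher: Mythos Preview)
Your treatment of monotonicity in $x$ is correct and matches the paper's argument.

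For monotonicity in $\pi$, your reduction via Lemma~\ref{lem:EssentialIdentity} to the inequality $A_1(\hat\pi)\ge A_0(\hat\pi)$ is valid, but neither of your two proposed proofs of this inequality goes through as stated. In the Girsanov route, after the change of measure you obtain
\[
A_1(\hat\pi)=\mathbb{E}^{\mathbb P}\!\left[Z_{\tau^{x,0}_{\hat\pi}}\,e^{-r\tau^{x,0}_{\hat\pi}}\Bigl(\tfrac{X^{x,0}_{\tau^{x,0}_{\hat\pi}}}{r-\mu_1}-I\Bigr)\right],
\]
while your auxiliary estimate controls $\mathbb{E}^{\mathbb P}\bigl[e^{-r\tau^{x,0}_{\hat\pi}}(X^{x,0}_{\tau^{x,0}_{\hat\pi}}e^{(\mu_1-\mu_0)\tau^{x,0}_{\hat\pi}}/(r-\mu_1)-I)\bigr]$. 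The factors $Z_\tau$ and $e^{(\mu_1-\mu_0)\tau}$ are not comparable pointwise (indeed $\mathbb{E}[Z_\tau e^{-r\tau}]=\mathbb{E}^{\mathbb Q}[e^{-r\tau}]\ge \mathbb{E}^{\mathbb P}[e^{-r\tau}]$, so the $-I$ term goes the \emph{wrong} way), and ``tracking the Radon--Nikodym correction'' does not close the gap. In your alternative route, the coupling $\theta^{\pi_1}\le\theta^{\pi_2}$ on $\Omega=[0,1]\times C(\mathbb R_+)$ does give $X^{x,\pi_1}\le X^{x,\pi_2}$ and $\Pi^{\pi_1}\le\Pi^{\pi_2}$ pointwise, but the filtrations $\mathcal F^{X^{x,\pi_1}}$ and $\mathcal F^{X^{x,\pi_2}}$ still differ, so you cannot feed the optimal $\tau$ for one problem into the other, and a vague appeal to Bellman iteration does not repair this. (If you want a coupling argument, the right coupling is through the innovation Brownian motion $\bar W$ in the SDE~\eqref{eq:XPiDynamics}: driving both systems with the \emph{same} $\bar W$ gives ordered solutions adapted to the \emph{same} filtration.)

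The paper sidesteps $A_1\ge A_0$ entirely. Starting from the same identity $V(x,\pi)=(1-\pi)A_0(\pi)+\pi A_1(\pi)$ you obtained, it rewrites this not as a difference but as the convex combination
\[
V(x,\pi)=\tfrac{\pi}{\hat\pi}\bigl[(1-\hat\pi)A_0(\pi)+\hat\pi A_1(\pi)\bigr]+\bigl(1-\tfrac{\pi}{\hat\pi}\bigr)A_0(\pi).
\]
By Lemma~\ref{lem:EssentialIdentity} the bracket equals $\mathbb{E}\bigl[e^{-r\tau^{x,\hat\pi}_\pi}(X^{x,\hat\pi}_{\tau^{x,\hat\pi}_\pi}/(r-\mu_{\theta^{\hat\pi}})-I)\bigr]\le V(x,\hat\pi)$, while $A_0(\pi)\le V(x,0)$; combining these with the separately established bound $V(x,0)\le V(x,\hat\pi)$ yields $V(x,\pi)\le V(x,\hat\pi)$. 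Thus the hard comparison $A_1\ge A_0$ is replaced by the much easier ``boundary'' comparison $V(\cdot,0)\le V(\cdot,\hat\pi)$, and your argument is only this one algebraic rearrangement away from a complete proof.
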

	\begin{proof}
		Let $x,\hat{x} \in (0,\infty)$ with $x \leq \hat{x}$ and $\pi \in [0,1]$. 
		Since $X^{x,\pi}_t \leq X^{\hat{x},\pi}_t$ for all $t \ge 0$~a.s.\ and the payoff function $g$ is increasing in its first argument, we obtain
		\begin{displaymath}
			V(x,\pi) 
			= \mathbb{E} \left[ e^{-r\tau^{x,\pi}_\pi} g(X^{x,\pi}_{\tau^{x,\pi}_\pi}, \Pi^\pi_{\tau^{x,\pi}_\pi}) \right] 
			\le \mathbb{E} \left[ e^{-r\tau^{x,\pi}_\pi} g(X^{\hat{x},\pi}_{\tau^{x,\pi}_\pi}, \Pi^\pi_{\tau^{x,\pi}_\pi}) \right] 
			\le V(\hat{x},\pi).
		\end{displaymath} 
		
		Let $x \in (0,\infty)$ and $\pi, \hat{\pi} \in [0,1]$  with $\pi \le \hat{\pi}$. 
		Since $X^{x,0}_t \le X^{x,\hat{\pi}}_t$ and $\Pi_t^{\hat{\pi}} \ge 0$ for all $t \ge 0$~a.s.\ and the payoff function $g$ is increasing in its second argument, we obtain 
		\begin{displaymath}
			V(x,0) 
			= \mathbb{E} \left[ e^{-r\tau^{x,0}_0} g(X^{x,0}_{\tau^{x,0}_0}, 0) \right] 
			\le  \mathbb{E} \left[ e^{-r\tau^{x,0}_0} g(X^{x,\hat{\pi}}_{\tau^{x,0}_0}, \Pi^{\hat{\pi}}_{\tau^{x,0}_0}) \right] 
			\le V(x,\hat{\pi}).
		\end{displaymath}
		This and Lemma~\ref{lem:EssentialIdentity} imply
		\begin{align*}
			V(x,\pi) 
			&= \mathbb{E} \left[ e^{-r\tau^{x,\pi}_\pi} \left( \frac{X^{x,\pi}_{\tau^{x,\pi}_\pi}}{r-\mu_{\theta^\pi}} - I \right) \right] \\
			&= (1-\pi) \, \mathbb{E} \left[ e^{-r\tau^{x,0}_\pi} \left( \frac{X^{x,0}_{\tau^{x,0}_\pi}}{r-\mu_0} - I \right) \right] + \pi \, \mathbb{E} \left[ e^{-r{\tau^{x,1}_\pi}} \left( \frac{X^{x,1}_{\tau^{x,1}_\pi}}{r-\mu_1} - I \right) \right] \\
			&= \frac{\pi}{\hat{\pi}} \left( (1-\hat{\pi}) \, \mathbb{E} \left[ e^{-r{\tau^{x,0}_\pi}} \left( \frac{X^{x,0}_{\tau^{x,0}_\pi}}{r-\mu_0} - I \right) \right] + \hat{\pi} \, \mathbb{E} \left[ e^{-r{\tau^{x,1}_\pi}} \left( \frac{X^{x,1}_{\tau^{x,1}_\pi}}{r-\mu_1} - I \right) \right] \right) \\
			&\quad + \left( (1-\pi) - \frac{\pi}{\hat{\pi}} (1-\hat{\pi}) \right) \, \mathbb{E} \left[ e^{-r{\tau^{x,0}_\pi}} \left( \frac{X^{x,0}_{\tau^{x,0}_\pi}}{r-\mu_0} - I \right) \right] \\
			&= \frac{\pi}{\hat{\pi}} \mathbb{E} \left[ e^{-r{\tau^{x,\hat{\pi}}_\pi}} \left( \frac{X^{x,\hat{\pi}}_{\tau^{x,\hat{\pi}}_\pi}}{r-\mu_{\theta^{\hat{\pi}}}} - I \right) \right] + \left(1-\frac{\pi}{\hat{\pi}} \right) \mathbb{E} \left[ e^{-r{\tau^{x,0}_\pi}} \left( \frac{X^{x,0}_{\tau^{x,0}_\pi}}{r-\mu_0} - I \right) \right] \\
			&\le \frac{\pi}{\hat{\pi}} V(x,\hat{\pi}) + \left(1-\frac{\pi}{\hat{\pi}} \right) V(x,0)
			\le V(x, \hat{\pi}). \qedhere
		\end{align*}
	\end{proof}
	
	\begin{lemma}
		\label{lem:ValueFunctionLipschitz}
		The value function $V$ is locally Lipschitz continuous.
	\end{lemma}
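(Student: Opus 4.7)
The plan is to prove the local Lipschitz bound coordinate-wise and then combine the two one-dimensional estimates by the triangle inequality. For each coordinate, the strategy is to evaluate a carefully chosen suboptimal stopping time at one of the two endpoints so that the resulting expression can be directly compared with the value function at the other endpoint; the identities \eqref{eq:xShift} and Lemma~\ref{lem:EssentialIdentity} are tailor-made for this.

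For the $x$-direction, I fix $\pi \in [0,1]$ and $0 < x \le \hat{x}$. Monotonicity (Lemma~\ref{lem:ValueFunctionIncreasing}) immediately gives $V(\hat{x},\pi) - V(x,\pi) \ge 0$. For the reverse bound, I take the stopping time $\tau^{\hat{x},\pi}_{\pi}$ optimal for $V(\hat{x},\pi)$; by the remarks following \eqref{eq:TauDecomposition} it is simultaneously an $\mathcal{F}^{X^{x,\pi}}$-stopping time. Applying \eqref{eq:xShift} with the roles of $x$ and $\hat{x}$ swapped at this stopping time yields
\begin{displaymath}
V(x,\pi) \;\ge\; \tfrac{x}{\hat{x}}\, V(\hat{x},\pi) + \bigl(\tfrac{x}{\hat{x}}-1\bigr)\mathbb{E}\!\left[e^{-r\tau^{\hat{x},\pi}_\pi} I\right],
\end{displaymath}
so that rearranging and using $\mathbb{E}[e^{-r\tau}I] \le I$ gives $V(\hat{x},\pi)-V(x,\pi) \le \tfrac{\hat{x}-x}{x}(V(x,\pi)+I)$. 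This is linear in $\hat{x}-x$ with a constant that is bounded on any compact $x$-range bounded away from $0$.

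For the $\pi$-direction, I fix $x$ and $0 \le \pi \le \hat{\pi} \le 1$. Again monotonicity handles the lower bound. For the upper bound, I use the stopping time $\tau^{x,\pi}_{\hat{\pi}}$ introduced in \eqref{eq:Form_Stopping_Times}, which lies in $\mathcal{T}^{x,\pi}$ and is therefore admissible for $V(x,\pi)$. Writing $J_i := \mathbb{E}[e^{-r\tau^{x,i}_{\hat{\pi}}}(\tfrac{X^{x,i}_{\tau^{x,i}_{\hat{\pi}}}}{r-\mu_i}-I)]$, Lemma~\ref{lem:EssentialIdentity} applied to $\tau^{x,\pi}_{\hat{\pi}}$ gives the lower bound $V(x,\pi) \ge (1-\pi)J_0 + \pi J_1$, and the same lemma applied to the optimal stopping time $\tau^{x,\hat{\pi}}_{\hat{\pi}} = \tau_*(x,\hat{\pi})$ gives the exact equality $V(x,\hat{\pi}) = (1-\hat{\pi})J_0 + \hat{\pi} J_1$ with the \emph{same} $J_0, J_1$. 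Subtracting, the $J_i$ terms line up and one obtains $V(x,\hat{\pi}) - V(x,\pi) \le (\hat{\pi}-\pi)(J_1 - J_0)$. Since $J_1 \le V(x,1)$ (as $\tau^{x,1}_{\hat{\pi}}$ is $\mathcal{F}^{X^{x,1}}$-admissible) and $J_0 \ge -I$, this yields $V(x,\hat{\pi})-V(x,\pi) \le (\hat{\pi}-\pi)(V(x,1)+I)$, with the right-hand side bounded on compact $x$-ranges because $V(\,\cdot\,,1)$ is the value of the classical fully-observed problem and is hence finite and continuous.

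Combining the two estimates by the triangle inequality produces a Lipschitz constant $C_K$ on any compact $K \subset (0,\infty) \times [0,1]$; concretely, if $K \subseteq [a,b]\times[0,1]$, one may take $C_K = (V(b,1)+I)\max(1/a, 1)$. The main subtlety is the $\pi$-estimate: naively comparing the optimal stopping times for $V(x,\pi)$ and $V(x,\hat{\pi})$ via Lemma~\ref{lem:EssentialIdentity} produces four different $J_i$-terms that do not cancel. The decisive trick is to evaluate $V(x,\pi)$ at the non-optimal stopping time $\tau^{x,\pi}_{\hat{\pi}}$, because this particular choice shares its $\theta$-decomposition with the optimal stopping time of $V(x,\hat{\pi})$; this is precisely the role played by the family \eqref{eq:Form_Stopping_Times}, and is the point where the earlier preparations pay off.
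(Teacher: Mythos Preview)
Your proof is correct and follows essentially the same route as the paper: in both coordinates you plug the optimal stopping time of one endpoint into the other value as a suboptimal candidate, using \eqref{eq:xShift} for the $x$-direction and Lemma~\ref{lem:EssentialIdentity} for the $\pi$-direction, exactly as the paper does. The only difference is in the final bookkeeping of the $x$-estimate: the paper simplifies the remainder $\bigl(\tfrac{\hat{x}}{x}-1\bigr)\bigl(\mathbb{E}[e^{-r\tau}g(X^{x,\pi}_\tau,\Pi^\pi_\tau)]+\mathbb{E}[e^{-r\tau}I]\bigr)$ by recognizing it as $\tfrac{\hat{x}-x}{x}\mathbb{E}[e^{-r\tau}X^{x,\pi}_\tau(\tfrac{1-\Pi^\pi_\tau}{r-\mu_0}+\tfrac{\Pi^\pi_\tau}{r-\mu_1})]$ and then applying optional sampling to the supermartingale $e^{-rt}X^{x,1}_t$, which yields the \emph{global} Lipschitz constant $\tfrac{1}{r-\mu_1}$; you instead bound the remainder crudely by $\tfrac{\hat{x}-x}{x}(V(x,\pi)+I)$, obtaining only a local constant. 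Since the lemma only asserts local Lipschitz continuity, your simpler bound is perfectly adequate.
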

	\begin{proof}
		Let $x,\hat{x} \in (0,\infty)$ with $x \leq \hat{x}$ and $\pi \in [0,1]$. Equation \eqref{eq:xShift} yields
		\begin{displaymath}
			V(\hat{x},\pi) 
			= \mathbb{E} \left[ e^{-r\tau^{\hat{x},\pi}_\pi} g(X^{\hat{x},\pi}_{\tau^{\hat{x},\pi}_\pi}, \Pi^\pi_{\tau^{\hat{x},\pi}_\pi}) \right] 
			= \frac{\hat{x}}{x} \mathbb{E} \left[ e^{-r\tau^{\hat{x},\pi}_\pi} g(X^{x,\pi}_{\tau^{\hat{x},\pi}_\pi},\Pi^\pi_{\tau^{\hat{x},\pi}_\pi}) \right] + \left( \frac{\hat{x}}{x} -1 \right) \mathbb{E} \left[ e^{-r\tau^{\hat{x},\pi}_\pi} I \right].
		\end{displaymath}
		Moreover, we have
		\begin{displaymath}
			V(x,\pi) 
			\ge \mathbb{E} \left[ e^{-r\tau^{\hat{x},\pi}_\pi} g(X^{x,\pi}_{\tau^{\hat{x},\pi}_\pi}, \Pi^\pi_{\tau^{\hat{x},\pi}_\pi}) \right].
		\end{displaymath} 
		Since $V(\hat{x},\pi) \ge V(x,\pi)$ by Lemma \ref{lem:ValueFunctionIncreasing}, we have
		\begin{align*}
			|V(\hat{x},\pi) - V(x,\pi)|
			&\le \left( \frac{\hat{x}}{x} -1 \right) \left( \mathbb{E} \left[ e^{-r\tau^{\hat{x},\pi}_\pi} g(X^{x,\pi}_{\tau^{\hat{x},\pi}_\pi},\Pi^\pi_{\tau^{\hat{x},\pi}_\pi}) \right] + \mathbb{E} \left[ e^{-r\tau^{\hat{x},\pi}_\pi} I \right] \right) \\
			&= \left( \frac{\hat{x}}{x} -1 \right) \mathbb{E} \left[ e^{-r\tau^{\hat{x},\pi}_\pi} X^{x,\pi}_{\tau^{\hat{x},\pi}_\pi} \left( \frac{1-\Pi^\pi_{\tau^{\hat{x},\pi}_\pi}}{r-\mu_0}+\frac{\Pi^\pi_{\tau^{\hat{x},\pi}_\pi}}{r-\mu_1}  \right) \right] \\
			&\le \left( \hat{x} - x \right) \frac{1}{x} \mathbb{E} \left[ 	e^{-r\tau^{\hat{x},\pi}_\pi} \frac{X^{x,1}_{\tau^{\hat{x},\pi}_\pi}}{r-\mu_1} \right] \le \frac{1}{r-\mu_1} \left| \hat{x} - x \right|,
		\end{align*}
		where the last inequality follows from the optional sampling theorem \cite[p.19, Theorem 3.22]{KaratzasShreve1998} applied to the supermartingale $\left(e^{-rt} X^{x,1}_t\right)_{t \ge 0}$ with last element $\lim_{t\to \infty} e^{-rt} X^{x,1}_t = 0$ a.s.
				
	 	Let $x \in (0,\infty)$ and $\pi,\hat{\pi} \in [0,1]$ with $\pi \leq \hat{\pi}$. By Lemma \ref{lem:EssentialIdentity}, we have
	 	\begin{align*}
	 		V(x,\hat{\pi}) 
	 		&= \mathbb{E} \left[ e^{-r\tau^{x,\hat{\pi}}_{\hat{\pi}}} \left( \frac{X^{x,\hat{\pi}}_{\tau^{x,\hat{\pi}}_{\hat{\pi}}}}{r-\mu_{\theta^{\hat{\pi}}}} - I \right) \right] \\
	 		&= (1-\hat{\pi}) \mathbb{E} \left[ e^{-r\tau^{x,0}_{\hat{\pi}}} \left( \frac{X^{x,0}_{\tau^{x,0}_{\hat{\pi}}}}{r-\mu_0} - I \right) \right] + \hat{\pi} \mathbb{E} \left[ e^{-r\tau^{x,1}_{\hat{\pi}}} \left( \frac{X^{x,1}_{\tau^{x,1}_{\hat{\pi}}}}{r-\mu_1} - I \right) \right] \\
	 		&= (1-\pi) \mathbb{E} \left[ e^{-r\tau^{x,0}_{\hat{\pi}}} \left( \frac{X^{x,0}_{\tau^{x,0}_{\hat{\pi}}}}{r-\mu_0} - I \right) \right] + \pi \mathbb{E} \left[ e^{-r\tau^{x,1}_{\hat{\pi}}} \left( \frac{X^{x,1}_{\tau^{x,1}_{\hat{\pi}}}}{r-\mu_1} - I \right) \right] \\
	 		&\quad + (\hat{\pi} - \pi) \left( \mathbb{E} \left[ e^{-r\tau^{x,1}_{\hat{\pi}}} \left( \frac{X^{x,1}_{\tau^{x,1}_{\hat{\pi}}}}{r-\mu_1} - I \right) \right] - \mathbb{E} \left[ e^{-r\tau^{x,0}_{\hat{\pi}}} \left( \frac{X^{x,0}_{\tau^{x,0}_{\hat{\pi}}}}{r-\mu_0} - I \right) \right] \right) \\
	 		&\le \mathbb{E} \left[ e^{-r\tau^{x,\pi}_{\hat{\pi}}} \left( \frac{X^{x,\pi}_{\tau^{x,\pi}_{\hat{\pi}}}}{r-\mu_{\theta^\pi}} - I \right) \right] + (\hat{\pi} - \pi)  (V(x,1)+I) \\
	 		&\le V(x,\pi) + (\hat{\pi} - \pi)( V(x,1) +I).
	 	\end{align*}
	  	Since $V$ is increasing by Lemma \ref{lem:ValueFunctionIncreasing}, we obtain
		\begin{displaymath}
			|V(x,\hat{\pi}) - V(x,\pi)|
			\le |\hat{\pi} - \pi| (V(x,1)+I). \qedhere
		\end{displaymath}
	\end{proof}

%%%%%%%%%%%%%%%%%%%%%%%%%%%%%%%%%%%%%%%%%%%%%%%%%%%%%%%%%%%%%%%%%%%%%%%%%%%%%%%%%%%%%
\section{Characterization of the Boundary Function}
	\label{sec:Characterization}
	We note that the existence of a boundary function $b : [0,1] \to [0,\infty]$ given by
	\begin{displaymath}
		\pi \mapsto b(\pi) := \inf\{x : (x,\pi) \in D\},
	\end{displaymath} 
	that characterizes the stopping set $D$ from~\eqref{eq:StoppingSetD} as
	\begin{displaymath}
		D = \{(x,\pi) \in (0,\infty) \times [0,1] : x \geq b(\pi) \},
	\end{displaymath}	
	immediately follows from Lemma~\ref{lemma:PropertyOfStoppingSet}.
	Intuitively, the boundary function maps the prior beliefs onto the minimal profit level, for which instantaneous stopping is optimal. 
	In this section, we will prove that the boundary function is continuous, decreasing and lies between the stopping thresholds for the problems with known drift.
	Furthermore, we derive a non-trivial lower bound for the boundary function. 
	These results allow us to establish that the boundary function is the unique solution of a nonlinear integral equation.
	
	%%%%%%%%%%%%%%%%%%%%%%%%%%%%%%%%%%%%%%%%%%%%%%%%%%%%%%%%%%%%%%%%%%%%%%%%%%%%%%%%%%%%%
	\subsection{Properties of the Boundary Function}
	
	Let us start by recalling the results for the problem with known drift $\mu_i$ for $i \in \{0,1\}$ (see \cite[Sec. 6.1]{DixitPindyck1994}): 
	For these problems a threshold strategy is optimal, that is to stop whenever the profits exceed the threshold 
	\begin{displaymath}
		x^*_i := \frac{\beta_i}{\beta_i-1} (r-\mu_i) I,
	\end{displaymath} 
	where $\beta_i$ is the unique solution of 
	\begin{equation}
		\label{eq:BetaQuadraticEquation}
		\frac{1}{2} \sigma^2 \beta_i(\beta_i -1) + \mu_i \beta_i - r = 0
	\end{equation} 
	that satisfies $\beta_i >1$.
	Some easy computations (see Lemma~\ref{lem:beta_0_beta_1}) show that $\beta_0 > \beta_1$ and $x_0^\ast > x_1^\ast$.
	Since the belief process $\Pi^i$ is absorbing for $i \in \{0,1\}$, the problem for initial values $(x,i)$ reduces to the problem with known drift. 
	Hence, we immediately have that $b(0) = x_0^\ast> x_1 ^\ast = b(1)$.
	 	
	With these preparations, we can state the main result on the properties of the boundary function:
	\begin{theorem}
		\label{thm:PropertiesBoundaryFunction}
		The boundary function $b$ is decreasing, continuous, has the image $b([0,1]) = [x_1^*,x_0^*]$ and is bounded from below by the decreasing function $\underline{b}: [0,1] \to [x_1^*,x_0^*]$,
		\begin{equation}
			\label{eq:LowerBoundForBoundaryFunction}
			\pi \mapsto \underline{b}(\pi) := \frac{\beta_0 (1-\pi) + \beta_1 \pi}{ (\beta_0-1) \frac{1-\pi}{r-\mu_0} + (\beta_1-1) \frac{\pi}{r-\mu_1} } I.
		\end{equation}
	\end{theorem}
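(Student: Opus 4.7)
The plan is to prove the four claims in the order: upper and lower bounds on $b$, then monotonicity, then continuity. The central tools throughout are the class of stopping times \eqref{eq:Form_Stopping_Times} and the decomposition identity of Lemma \ref{lem:EssentialIdentity}.

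For the upper bound $b(\pi) \le x_0^*$, I would apply Corollary \ref{cor:UpperBoundValueFct} at $x = x_0^*$: the known-drift solutions give $V(x_0^*,0) = g(x_0^*,0)$ and $V(x_0^*,1) = g(x_0^*,1)$, and combined with linearity of $g$ in $\pi$, the corollary gives $V(x_0^*,\pi) \le g(x_0^*,\pi)$, hence $(x_0^*,\pi) \in D$. For the lower bound $b(\pi) \ge \underline{b}(\pi)$, I would test with the hitting time $\tau_c := \inf\{t \ge 0 : X^{x,\pi}_t \ge c\}$ for $c > x$. Lemma \ref{lem:EssentialIdentity} together with the standard Laplace transform $\mathbb{E}[e^{-r\tau^i_c}] = (x/c)^{\beta_i}$ yields
\[ W(c) := (1-\pi)(x/c)^{\beta_0}\bigl(\tfrac{c}{r-\mu_0}-I\bigr) + \pi(x/c)^{\beta_1}\bigl(\tfrac{c}{r-\mu_1}-I\bigr) \le V(x,\pi). \]
For $(x,\pi) \in D$, the equality $V(x,\pi) = g(x,\pi) = \lim_{c \downarrow x}W(c)$ forces the one-sided derivative condition $W'(x^+) \le 0$, which after simplification rearranges to $x \ge \underline{b}(\pi)$. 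Since $\underline{b}$ is a ratio of affine functions of $\pi$, it is monotone, and $\underline{b}(0) = x_0^* > x_1^* = \underline{b}(1)$ implies that it is decreasing, giving $b(\pi) \in [x_1^*, x_0^*]$.

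For monotonicity of $b$, I would prove that $V(x,\cdot)$ is convex on $[0,1]$ for each fixed $x$. Given $\pi = \alpha\pi_0 + (1-\alpha)\pi_1$ with $\pi_0 < \pi_1$ and $\alpha \in [0,1]$, define $A_\pi := \mathbb{E}[e^{-r\tau^{x,0}_\pi}(X^{x,0}_{\tau^{x,0}_\pi}/(r-\mu_0) - I)]$ and $B_\pi$ its analogue for $\theta = 1$. Lemma \ref{lem:EssentialIdentity} applied to the optimal stopping time $\tau^{x,\pi}_\pi$ gives $V(x,\pi) = (1-\pi)A_\pi + \pi B_\pi$, while applied to the sub-optimal $\tau^{x,\pi_j}_\pi$ it yields $V(x,\pi_j) \ge (1-\pi_j)A_\pi + \pi_j B_\pi$ for $j \in \{0,1\}$, with \emph{the same} pair $(A_\pi, B_\pi)$ appearing in all three identities. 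Taking the convex combination gives $\alpha V(x,\pi_0) + (1-\alpha) V(x,\pi_1) \ge (1-\pi)A_\pi + \pi B_\pi = V(x,\pi)$, establishing convexity. Since $g$ is linear in $\pi$, $V - g$ is convex and non-negative, so its zero set $\{\pi : (x,\pi) \in D\}$ is a convex subinterval of $[0,1]$. For $x \ge x_1^*$ this interval contains $\pi = 1$ (from the known-drift case), hence has the form $[\pi^*(x), 1]$, and combined with Lemma \ref{lemma:PropertyOfStoppingSet} we conclude that $b$ is decreasing.

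For continuity, the Lipschitz continuity of $V$ from Lemma \ref{lem:ValueFunctionLipschitz} implies that $D$ is closed in $(0,\infty) \times [0,1]$, and right-continuity of $b$ at every $\pi$ follows by a standard limit argument: any sequence $\pi_n \downarrow \pi$ with $b(\pi_n) \to \ell < b(\pi)$ would produce a point $(x,\pi) \in D$ with $x < b(\pi)$ by closedness of $D$, a contradiction. Left-continuity follows analogously, since any jump $\lim_{\pi' \uparrow \pi} b(\pi') > b(\pi)$ would place some $x$ in the continuation region at every $\pi' < \pi$ but in the stopping region at $\pi$, contradicting continuity of $V - g$. Surjectivity onto $[x_1^*, x_0^*]$ then follows from the intermediate value theorem together with the endpoint values $b(0) = x_0^*$ and $b(1) = x_1^*$. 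The main obstacle is the convexity step, which crucially exploits the fact that in \eqref{eq:Form_Stopping_Times} the ``boundary-prior'' is a free parameter independent of the actual prior, allowing the same pair $(A_\pi, B_\pi)$ to appear simultaneously in the Lemma \ref{lem:EssentialIdentity} decompositions of the optimal value $V(x,\pi)$ and of the sub-optimal lower bounds for $V(x,\pi_j)$.
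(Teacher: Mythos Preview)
Your treatment of the bounds and of monotonicity is correct, and the convexity argument for $V(x,\cdot)$ is an elegant alternative to the paper's direct computation in Lemma~\ref{lem:BoundaryFunctionDecreasing}: by running Lemma~\ref{lem:EssentialIdentity} with the same boundary prior $\hat\pi=\pi$ but three different outer priors $\pi_0,\pi,\pi_1$, you obtain convexity of $\pi\mapsto V(x,\pi)$, hence convexity of the non-negative function $(V-g)(x,\cdot)$, and conclude that its zero set is an interval of the form $[\pi^*(x),1]$. The paper instead shows the implication $(x,\pi)\in D\Rightarrow(x,\hat\pi)\in D$ for $\hat\pi>\pi$ by writing $V(x,\hat\pi)$ as a specific affine combination of sub-optimal values at $\pi$ and at $1$; your route is shorter and yields the additional information that $V$ is convex in the belief variable.

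The left-continuity argument, however, does not work. A jump $b(\pi^-)>b(\pi)$ would indeed place every $x\in(b(\pi),b(\pi^-))$ in $D$ at $\pi$ and in the continuation region for all $\pi'<\pi$, but this merely says $(V-g)(x,\pi')>0$ for $\pi'<\pi$ and $(V-g)(x,\pi)=0$, which is entirely compatible with continuity of $V-g$ (and even with the convexity you just established). Closedness of $D$ is equivalent to lower semicontinuity of $b$, and for a decreasing function this is exactly right-continuity; it gives you nothing on the left. The paper closes this gap in two non-trivial steps: Lemma~\ref{lem:BoundaryFunctionLocallyLipschitz} shows local Lipschitz continuity of $b$ on $(0,1)$ by exhibiting, for $\pi<\hat\pi$, an explicit $x>b(\hat\pi)$ with $(x,\pi)\in D$ and bounding $b(\pi)-b(\hat\pi)$ in terms of $\hat\pi-\pi$; and Lemma~\ref{lem:BoundaryFunctionLeftContinuousOn1} treats the endpoint $\pi=1$ separately by a contradiction argument using the strong Markov property and the explicit value $V(\cdot,1)$. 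Your proposal needs a substitute for these arguments.
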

	
	In the rest of this subsection we prove this theorem in several steps: 
	First, we prove that $\underline{b}$ is indeed a lower bound of the boundary function. 
	Focusing on stopping times of the form \eqref{eq:Form_Stopping_Times} and utilizing the properties established in Section~\ref{sec:Transformation}, we then establish that the boundary function is decreasing and locally Lipschitz continuous on $(0,1)$. 
	Finally, we establish the continuity in $0$ and $1$.
		
	\begin{lemma}
		\label{lem:BoundaryFunctionLowerBound}
		The boundary function $b$ is bounded from below by $\underline{b}$ from \eqref{eq:LowerBoundForBoundaryFunction}. 
	\end{lemma}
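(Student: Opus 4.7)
The plan is to argue at the level of the stopping set: for every $\pi \in [0,1]$ and every $x < \underline{b}(\pi)$, I will exhibit an $\mathcal{F}^{X^{x,\pi}}$-stopping time whose expected payoff strictly exceeds $g(x,\pi)$. This forces $V(x,\pi) > g(x,\pi)$, hence $(x,\pi) \notin D$, and the bound $b(\pi) \geq \underline{b}(\pi)$ then follows from the definition of $b$.

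The test strategies to use are threshold rules on the profit process: for $c > x$, let $\tau_c := \inf\{t \geq 0 : X^{x,\pi}_t \geq c\}$, which lies in $\mathcal{T}^{x,\pi}$. Conditional on $\theta^\pi = i$, the process $X^{x,\pi}$ is a geometric Brownian motion with drift $\mu_i$, and the classical Laplace transform for its first hitting time (valid because $r > \mu_i$) gives $\mathbb{E}[e^{-r\tau_c} \mid \theta^\pi = i] = (x/c)^{\beta_i}$. Conditioning on $\theta^\pi$, as in the proof of Lemma~\ref{lem:EssentialIdentity}, produces the lower bound
\begin{displaymath}
    V(x,\pi) \;\geq\; h(c) \;:=\; (1-\pi)\Bigl(\tfrac{x}{c}\Bigr)^{\beta_0}\!\left( \tfrac{c}{r-\mu_0} - I \right) + \pi \Bigl(\tfrac{x}{c}\Bigr)^{\beta_1}\!\left( \tfrac{c}{r-\mu_1} - I \right).
\end{displaymath}

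A short computation shows $h(x) = g(x,\pi)$ and
\begin{displaymath}
    h'(x) = \frac{1}{x}\left\{ \bigl[(1-\pi)\beta_0 + \pi\beta_1\bigr] I - \left[ (\beta_0-1)\tfrac{1-\pi}{r-\mu_0} + (\beta_1-1)\tfrac{\pi}{r-\mu_1} \right] x \right\},
\end{displaymath}
so by the very definition of $\underline{b}$ in \eqref{eq:LowerBoundForBoundaryFunction}, $h'(x) > 0$ precisely when $x < \underline{b}(\pi)$. In that case one can pick $c$ slightly above $x$ to obtain $V(x,\pi) \geq h(c) > h(x) = g(x,\pi)$, which is what we wanted. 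The endpoints $\pi \in \{0,1\}$ need no separate treatment since the problem reduces to the known-drift case and $\underline{b}(i) = x_i^*$. The computation is otherwise routine; the substance lies in identifying the right test family, which is tractable because, after conditioning on $\theta^\pi$, the hitting-time Laplace transform decouples cleanly across the two drift regimes.
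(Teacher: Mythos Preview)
Your proof is correct and follows essentially the same approach as the paper: both use threshold hitting times of the profit process, condition on $\theta^\pi$ to compute the expected discounted payoff explicitly, and differentiate at the starting level to show that immediate stopping is suboptimal whenever $x < \underline{b}(\pi)$. The only cosmetic difference is that the paper parameterizes the threshold multiplicatively via $\delta = c/x$ and argues by contrapositive (any $x \geq b(\pi)$ must satisfy the derived inequality), whereas you parameterize directly by $c$ and argue that $x < \underline{b}(\pi)$ forces $(x,\pi) \notin D$.
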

	\begin{proof}
		Let us fix $\pi \in [0,1]$. 
		We prove that $b(\pi) \ge \underline{b}(\pi)$ by showing that any $x \ge b(\pi)$ satisfies 
		\begin{equation}
			\label{eq:DeltaStrategyDifferentiatedPayoff}
			x \left[ (1-\beta_0) \frac{1-\pi}{r-\mu_0} + (1-\beta_1) \frac{\pi}{r-\mu_1} \right] + \left[ \beta_0 (1-\pi) + \beta_1 \pi \right] I \le 0.
		\end{equation} 
		Rearranging this term for $x=b(\pi)$ immediately yields the desired claim.
		To prove \eqref{eq:DeltaStrategyDifferentiatedPayoff}, we define for every $\delta \ge 1$ the stopping time
		\begin{displaymath}
			\gamma^{x,\pi}_\delta := \inf\{t \geq 0 : X^{x,\pi}_t \ge x \delta \}.
		\end{displaymath} 
		For $i \in \{0,1\}$, we use \cite[p.622, Eq. 2.0.1]{BorodinSalminen2002} to obtain
		\begin{displaymath}
			\mathbb{E} \left[ e^{-r\gamma^{x,i}_\delta} \left( \frac{X^{x,i}_{\gamma^{x,i}_\delta}}{r-\mu_i} - I \right) \right] = \left(\frac{x}{x\delta}\right)^{\beta_i} \left( \frac{x\delta}{r-\mu_i}-I\right) = \delta^{1-\beta_i} \frac{x}{r-\mu_i} - \delta^{-\beta_i} I.
		\end{displaymath}
		Since the stopping time $\gamma^{x,\pi}_\delta$ can be decomposed as $\gamma^{x,\pi}_\delta = \mathbb{I}_{\{\theta^\pi=0\}} \gamma^{x,0}_\delta + \mathbb{I}_{\{\theta^\pi=1\}} \gamma^{x,1}_\delta$, we have
		\begin{align*}
			V(x,\pi) 
			&\ge \mathbb{E} \left[ e^{-r\gamma^{x,\pi}_\delta} \left( \frac{X^{x,\pi}_{\gamma^{x,\pi}_\delta}}{r-\mu_{\theta^\pi}} - I \right) \right] \\
			&=  (1-\pi) \, \mathbb{E} \left[ e^{-r\gamma^{x,0}_\delta} \left( \frac{X^{x,0}_{\gamma^{x,0}_\delta}}{r-\mu_0} - I \right) \right] + \pi \, \mathbb{E} \left[ e^{-r\gamma^{x,1}_\delta} \left( \frac{X^{x,1}_{\gamma^{x,1}_\delta}}{r-\mu_1} - I \right) \right] \\
			&= (1-\pi) \left[ \delta^{1-\beta_0} \frac{x}{r-\mu_0}- \delta^{-\beta_0} I \right] + \pi \left[ \delta^{1-\beta_1} \frac{x}{r-\mu_1}- \delta^{-\beta_1} I \right] \\
			&= \underbrace{x \left[ \delta^{1-\beta_0} \frac{1-\pi}{r-\mu_0} + \delta^{1-\beta_1} \frac{\pi}{r-\mu_1} \right] - \left[ \delta^{-\beta_0} (1-\pi) + \delta^{-\beta_1} \pi \right] I}_{=: h(x, \pi, \delta)} .
		\end{align*}
		The derivative of $h$ with respect to $\delta$ evaluated at $\delta=1$ reads
		\begin{displaymath}
			\frac{\partial}{\partial \delta} h(x, \pi, 1) = x \left[ (1-\beta_0) \frac{1-\pi}{r-\mu_0} + (1-\beta_1) \frac{\pi}{r-\mu_1} \right] + \left[ \beta_0 (1-\pi) + \beta_1 \pi \right] I.
		\end{displaymath}
		Hence, if we assume that \eqref{eq:DeltaStrategyDifferentiatedPayoff} does not hold, then we have $\frac{\partial}{\partial \delta} h(x, \pi, 1)>0$. 
		This implies that there exists an $\delta > 1$ such that $h(x, \pi, \delta) > h(x, \pi, 1) = g(x,\pi)$, i.e., the payoff using the strategy $\gamma^{x,\pi}_\delta$ is strictly greater than the payoff for stopping immediately. 
		Thus, we have $(x,\pi) \notin D$, which contradicts $x \ge b(\pi)$. 
		Therefore, \eqref{eq:DeltaStrategyDifferentiatedPayoff} holds for all $x \ge b(\pi)$, which means that $\underline{b}$ is indeed a lower bound of $b$.
	\end{proof}

	\begin{lemma}
		\label{lem:BoundaryFunctionDecreasing}
		The boundary function $b$ is decreasing.
	\end{lemma}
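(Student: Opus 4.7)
The plan is to fix $\pi < \hat{\pi}$ in $[0,1]$ (the case $\pi = \hat{\pi}$ being trivial) and any $x \ge b(\pi)$, and to show that $(x,\hat{\pi}) \in D$; taking the infimum over such $x$ then yields $b(\hat{\pi}) \le b(\pi)$. The central observation is that Lemma~\ref{lem:EssentialIdentity} expresses both $V(x,\hat{\pi})$ (via the optimal stopping time $\tau^{x,\hat{\pi}}_{\hat{\pi}}$) and the payoff of the admissible stopping time $\tau^{x,\pi}_{\hat{\pi}}$ in the $(x,\pi)$-problem as convex combinations of the \emph{same} two quantities
\[
J_i := \mathbb{E}\!\left[e^{-r\tau^{x,i}_{\hat{\pi}}}\!\left(\tfrac{X^{x,i}_{\tau^{x,i}_{\hat{\pi}}}}{r-\mu_i} - I\right)\right], \quad i \in \{0,1\},
\]
namely $V(x,\hat{\pi}) = (1-\hat{\pi})J_0 + \hat{\pi} J_1$ and $(1-\pi) J_0 + \pi J_1 \le V(x,\pi) = g(x,\pi)$. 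Crucially, the stopping times $\tau^{x,i}_{\hat{\pi}}$ depend on the observed process only through the imagined prior $\hat{\pi}$, not on $\pi$.

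Next I rewrite $(1-\hat{\pi},\hat{\pi}) = (1-t)(1-\pi,\pi) + t\,(0,1)$ with $t := (\hat{\pi}-\pi)/(1-\pi) \in (0,1]$, which produces the identity
\[
V(x,\hat{\pi}) = (1-t)\bigl[(1-\pi)J_0 + \pi J_1\bigr] + t\, J_1 \le (1-t)\, g(x,\pi) + t\, V(x,1),
\]
where I additionally use $J_1 \le V(x,1)$ (since $\tau^{x,1}_{\hat{\pi}}$ is admissible in the full-information problem with drift $\mu_1$). If one can argue $V(x,1) = g(x,1)$, then by affineness of $g$ in its second argument the right-hand side equals $(1-t) g(x,\pi) + t\, g(x,1) = g(x,(1-t)\pi + t) = g(x,\hat{\pi})$, and combining with the automatic reverse inequality gives $V(x,\hat{\pi}) = g(x,\hat{\pi})$, i.e.\ $(x,\hat{\pi}) \in D$.

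The remaining step, which I expect to be the main (minor) technical point, is to verify $x \ge x_1^*$. Since $x \ge b(\pi) \ge \underline{b}(\pi)$ by Lemma~\ref{lem:BoundaryFunctionLowerBound}, it suffices to check $\underline{b}(\pi) \ge x_1^*$ on $[0,1]$. Plugging the formula~\eqref{eq:LowerBoundForBoundaryFunction} into this inequality, cross-multiplying and cancelling the common factor $(1-\pi)$ reduces it to $\beta_0(r-\mu_0)/(\beta_0-1) \ge \beta_1(r-\mu_1)/(\beta_1-1)$, i.e.\ $x_0^* \ge x_1^*$, which has already been noted in the paper. The argument as a whole hinges on the shared decomposition coming from Lemma~\ref{lem:EssentialIdentity}: because the same quantities $J_0,J_1$ appear on both sides, the stopping-set property at $\pi$ can be transferred to $\hat{\pi}$; without this coupling of the two problems, a direct comparison of $V(x,\pi)$ and $V(x,\hat{\pi})$ would be hard because the payoff $g$ depends on the prior and the two value functions are suprema over different filtrations.
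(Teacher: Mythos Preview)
Your proof is correct and follows essentially the same route as the paper: both decompose $V(x,\hat\pi)$ via Lemma~\ref{lem:EssentialIdentity} into $(1-\hat\pi)J_0+\hat\pi J_1$, rewrite this as $\frac{1-\hat\pi}{1-\pi}\bigl[(1-\pi)J_0+\pi J_1\bigr]+\bigl(\hat\pi-\frac{1-\hat\pi}{1-\pi}\pi\bigr)J_1$ (your parameter $t=\frac{\hat\pi-\pi}{1-\pi}$ is exactly $1-\frac{1-\hat\pi}{1-\pi}$), bound the two pieces by $V(x,\pi)=g(x,\pi)$ and $V(x,1)=g(x,1)$, and use that $g$ is affine in $\pi$. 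Your derivation of $x\ge x_1^*$ from $b(\pi)\ge\underline b(\pi)\ge x_1^*$ matches the paper's use of Lemma~\ref{lem:BoundaryFunctionLowerBound}; the paper additionally first records $b(\pi)\le b(0)$ via Corollary~\ref{cor:UpperBoundValueFct}, but this is not needed for the monotonicity argument itself, so your version is slightly more streamlined.
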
	
	\begin{proof}
		We start by showing that $b(0) \ge b(\pi) \ge b(1)$ for all $\pi \in [0,1]$: 
		Since $\underline{b}$ is a lower bound for $b$ by Lemma~\ref{lem:BoundaryFunctionLowerBound} and it is decreasing by Lemma~\ref{lem:LowerBoundDecreasing}, we immediately have $b(\pi ) \ge \underline{b}(\pi) \ge \underline{b}(1) = x_1^\ast = b(1)$.
		Moreover, since $b(0) > b(1)$, we have $V(b(0),1) = g(b(0),1)$. 
		With this and Corollary~\ref{cor:UpperBoundValueFct}, we obtain
		\begin{align*}
			V(b(0),\pi) 
			\le (1-\pi) V(b(0),0) + \pi V(b(0),1) 
			= (1-\pi) g(b(0),0) + \pi g(b(0),1) 
			= g(b(0),\pi).
		\end{align*}
		So, $V(b(0),\pi) = g(b(0), \pi)$, which, in turn, implies $(b(0),\pi) \in D$ and hence $b(\pi) \le b(0)$. 
		
		With these preparations, it suffices to show that for all $\pi,\hat{\pi} \in [0,1]$ with $\pi < \hat{\pi}$ and $x \in [b(1),b(0)]$ satisfying $(x,\pi) \in D$, it holds that $(x,\hat{\pi}) \in D$:
		As $(x,\pi) \in D$, we have $V(x,\pi) = g(x,\pi)$. 
		Since $x \ge b(1)$, we have $V(x,1) = g(x,1)$.
		With this and Lemma~\ref{lem:EssentialIdentity}, we obtain
		\begin{align*}
			V(x,\hat{\pi}) 
			&= \mathbb{E} \left[ e^{-r\tau^{x,\hat{\pi}}_{\hat{\pi}}} \left( \frac{ X^{x,\hat{\pi}}_{\tau^{x,\hat{\pi}}_{\hat{\pi}}} }{ r-\mu_{\theta^{\hat{\pi}}}} -I \right) \right] \\
			&= (1-\hat{\pi}) \, \mathbb{E} \left[ e^{-r\tau^{x,0}_{\hat{\pi}}} \left( \frac{X^{x,0}_{\tau^{x,0}_{\hat{\pi}}}}{r-\mu_0} -I \right) \right] + \hat{\pi} \, \mathbb{E} \left[ e^{-r\tau^{x,1}_{\hat{\pi}}} \left( \frac{X^{x,1}_{\tau^{x,1}_{\hat{\pi}}}}{r-\mu_1} -I \right) \right] \\
			&= \frac{1-\hat{\pi}}{1-\pi} (1-\pi) \, \mathbb{E} \left[ e^{-r\tau^{x,0}_{\hat{\pi}}} \left( \frac{X^{x,0}_{\tau^{x,0}_{\hat{\pi}}}}{r-\mu_0} -I \right) \right] \\
			&\quad + \frac{1-\hat{\pi}}{1-\pi} \pi \, \mathbb{E} \left[ e^{-r\tau^{x,1}_{\hat{\pi}}} \left( \frac{X^{x,1}_{\tau^{x,1}_{\hat{\pi}}}}{r-\mu_1} -I \right) \right] + \left( \hat{\pi} - \frac{1-\hat{\pi}}{1-\pi} \pi \right) \, \mathbb{E} \left[ e^{-r\tau^{x,1}_{\hat{\pi}}} \left( \frac{X^{x,1}_{\tau^{x,1}_{\hat{\pi}}}}{r-\mu_1} -I \right) \right] \\
			&= \frac{1-\hat{\pi}}{1-\pi} \, \mathbb{E} \left[ e^{-r\tau^{x,\pi}_{\hat{\pi}}} \left( \frac{X^{x,\pi}_{\tau^{x,\pi}_{\hat{\pi}}}}{r-\mu_{\theta^\pi}} -I \right) \right] + \left( \hat{\pi} - \frac{1-\hat{\pi}}{1-\pi} \pi \right) \, \mathbb{E} \left[ e^{-r\tau^{x,1}_{\hat{\pi}}} \left( \frac{X^{x,1}_{\tau^{x,1}_{\hat{\pi}}}}{r-\mu_1} -I \right) \right] \\
			&\le \frac{1-\hat{\pi}}{1-\pi} \, V(x,\pi) + \left( \hat{\pi} - \frac{1-\hat{\pi}}{1-\pi} \pi \right) \, V(x,1) 
			= \frac{1-\hat{\pi}}{1-\pi} \, g(x,\pi) + \left( \hat{\pi} - \frac{1-\hat{\pi}}{1-\pi} \pi \right) \, g(x,1) \\
			&= g(x,\hat{\pi}).
		\end{align*}
		Hence, we have $V(x,\hat{\pi}) = g(x,\hat{\pi})$, which, in turn, implies $(x, \hat{\pi}) \in D$.
	\end{proof}
	
	\begin{lemma}
		\label{lem:BoundaryFunctionLocallyLipschitz}
		The boundary function $b$ is locally Lipschitz continuous on $(0,1)$.
	\end{lemma}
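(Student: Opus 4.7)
The plan is to show that for any compact subinterval $[\pi_0,\pi_1]\subset(0,1)$ there exists $L>0$ such that $b(\pi)-b(\hat\pi)\le L(\hat\pi-\pi)$ for all $\pi<\hat\pi$ in $[\pi_0,\pi_1]$ (the other sign follows from monotonicity, Lemma~\ref{lem:BoundaryFunctionDecreasing}). I will proceed in two logical steps: (i) use the results of Section~\ref{sec:Transformation} to control $V-g$ at the auxiliary point $(b(\hat\pi),\pi)$ by a quantity of order $\hat\pi-\pi$, and (ii) convert this upper bound on the ``excess value'' into a bound on the horizontal shift $b(\pi)-b(\hat\pi)$ via a linear non-degeneracy estimate.

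For (i), monotonicity of $V$ in $\pi$ (Lemma~\ref{lem:ValueFunctionIncreasing}) together with $(b(\hat\pi),\hat\pi)\in D$ gives
\[
V(b(\hat\pi),\pi)\le V(b(\hat\pi),\hat\pi)=g(b(\hat\pi),\hat\pi)=g(b(\hat\pi),\pi)+b(\hat\pi)\,\kappa\,(\hat\pi-\pi),
\]
where $\kappa:=(\mu_1-\mu_0)/[(r-\mu_0)(r-\mu_1)]$, so that $V(b(\hat\pi),\pi)-g(b(\hat\pi),\pi)\le x_0^*\kappa(\hat\pi-\pi)$. For (ii) I aim to establish a constant $L_1>0$, uniform on $[\pi_0,\pi_1]$, with
\[
V(x,\pi)-g(x,\pi)\ \ge\ L_1\bigl(b(\pi)-x\bigr)\qquad\text{for }x\in[b(\hat\pi),b(\pi)].
\]
The idea is to construct, for $x<b(\pi)$, a suboptimal strategy tailored to $(x,\pi)$ via the stopping-time family~\eqref{eq:Form_Stopping_Times}, decompose the resulting expected payoff using Lemma~\ref{lem:EssentialIdentity}, and---following the computational template of Lemma~\ref{lem:BoundaryFunctionLowerBound} together with the hitting-time formulas for geometric Brownian motion from \cite{BorodinSalminen2002}---show that the gain over immediate stopping is bounded below by a positive multiple of $b(\pi)-x$. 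The non-degeneracy of the belief diffusion $\Pi^\pi_t(1-\Pi^\pi_t)$ on $[\pi_0,\pi_1]\subset(0,1)$ is crucial in making this gain strictly positive and uniform in $\pi$. Plugging $x=b(\hat\pi)$ into (ii) and combining with (i) then yields $L_1(b(\pi)-b(\hat\pi))\le x_0^*\kappa(\hat\pi-\pi)$, which is the required Lipschitz estimate.

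The main obstacle is step~(ii). A naive smooth-fit argument in the $x$-direction would only deliver a quadratic lower bound on $V-g$ (hence H\"older-$\tfrac12$ continuity of $b$), so the linear rate has to be extracted from the genuine two-dimensional structure. In particular, for $x\in[\underline b(\pi),b(\pi))$ the simple geometric strategy $\gamma^{x,\pi}_\delta$ employed in Lemma~\ref{lem:BoundaryFunctionLowerBound} has a nonpositive first-order derivative at $\delta=1$ and is therefore insufficient; a more delicate stopping-time construction, exploiting both the coupled dynamics of $(X^{x,\pi},\Pi^\pi)$ and the parametric family $\tau^{x,\pi}_{\hat\pi}$, is where I expect the technical work of the proof to concentrate.
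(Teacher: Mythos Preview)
Your step~(ii) is the genuine gap, and in fact the linear lower bound you hope to establish is \emph{false}. The value function satisfies the smooth-fit condition $\partial_x V(b(\pi),\pi)=\partial_x g(b(\pi),\pi)$ (this is Lemma~\ref{lem:ValueFunctionC^1}, proved later but an intrinsic property of $V$), so $V(x,\pi)-g(x,\pi)=O\bigl((b(\pi)-x)^2\bigr)$ as $x\uparrow b(\pi)$. Hence no uniform constant $L_1>0$ can satisfy $V(x,\pi)-g(x,\pi)\ge L_1(b(\pi)-x)$ for $x$ in a left neighbourhood of $b(\pi)$; and since for $\hat\pi$ close to $\pi$ the point $b(\hat\pi)$ lies in such a neighbourhood, the inequality fails precisely where you need it. No choice of stopping time, however delicate, can overcome this: $V$ is the supremum over all stopping times, and the quadratic vanishing is a feature of $V$ itself. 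You correctly diagnose that the one-dimensional smooth-fit heuristic only yields H\"older-$\tfrac12$, but the remedy is not a better lower bound on $V-g$ at fixed $\pi$---that avenue is closed.

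The paper's argument bypasses this entirely. Instead of estimating the excess value, it writes down an explicit candidate
\[
x=\frac{b(\hat\pi)}{\,1-\dfrac{\hat\pi-\pi}{\hat\pi(1-\pi)}\Bigl(1-\dfrac{b(\hat\pi)}{b(0)}\Bigr)\,}
\]
and proves directly that $(x,\pi)\in D$, i.e.\ $V(x,\pi)\le g(x,\pi)$. The computation uses the identity~\eqref{eq:xShift} together with Lemma~\ref{lem:EssentialIdentity} twice---once to pass from weights $(1-\pi,\pi)$ to $(1-\hat\pi,\hat\pi)$ and once to scale from $b(\hat\pi)$ to $b(0)$---exploiting that both $(b(\hat\pi),\hat\pi)$ and $(b(0),0)$ lie in $D$. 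The particular choice of $x$ is precisely what makes the residual terms cancel. Since $x$ exceeds $b(\hat\pi)$ by an amount of order $(\hat\pi-\pi)$, the bound $b(\pi)\le x$ delivers local Lipschitz continuity immediately.
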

	\begin{proof}
		Let $\pi, \hat{\pi} \in (0,1)$ with $\pi < \hat{\pi}$. 
		We consider
		\begin{displaymath}
			x := \frac{b(\hat{\pi})}{ 1 - \frac{\hat{\pi}-\pi}{\hat{\pi} (1-\pi)} \left( 1 - \frac{b(\hat{\pi})}{b(0)} \right)} > b(\hat{\pi})
		\end{displaymath}
		and notice that
		\begin{equation}
			\label{eq:UpperBoundForBoundaryFunctionEquation}
			(1-\pi) \left( \frac{x}{b(\hat{\pi})} -1 \right) - \frac{x}{b(\hat{\pi})} \left( 1-  \frac{\pi}{\hat{\pi}} \right) \left( 1- \frac{b(\hat{\pi}) }{b(0)}\right) = 0.
		\end{equation}
		We now show that $(x,\pi) \in D$: 
		For this we obtain using \eqref{eq:xShift} and Lemma~\ref{lem:EssentialIdentity} that 
		\begin{align*}
			V(x,\pi) 
			&= \mathbb{E} \left[ e^{-r\tau^{x,\pi}_\pi} \left( \frac{X^{x,\pi}_{\tau^{x,\pi}_\pi}}{r-\mu_{\theta^\pi}}-I\right) \right] \\
			&= \frac{x}{b(\hat{\pi})} \mathbb{E} \left[ e^{-r\tau^{x,\pi}_\pi} \left( \frac{X^{b(\hat{\pi}),\pi}_{\tau^{x,\pi}_\pi}}{r-\mu_{\theta^\pi}}-I\right) \right] + \left( \frac{x}{b(\hat{\pi})} -1 \right) \mathbb{E} \left[ e^{-r\tau^{x,\pi}_\pi} I \right] \\
			&= \frac{x}{b(\hat{\pi})} \left( (1-\pi) \mathbb{E} \left[ e^{-r\tau^{x,0}_\pi} \left( \frac{X^{b(\hat{\pi}),0}_{\tau^{x,0}_\pi}}{r-\mu_0}-I\right) \right] + \pi \mathbb{E} \left[ e^{-r\tau^{x,1}_\pi} \left( \frac{X^{b(\hat{\pi}),1}_{\tau^{x,1}_\pi}}{r-\mu_1}-I\right) \right] \right) \\
			&\quad + \left( \frac{x}{b(\hat{\pi})} -1 \right) \mathbb{E} \left[ e^{-r\tau^{x,\pi}_\pi} I \right] \\
			&= \frac{x}{b(\hat{\pi})} \Bigg( \frac{\pi}{\hat{\pi}}\hat{\pi} \mathbb{E} \left[ e^{-r\tau^{x,1}_\pi} \left( \frac{X^{b(\hat{\pi}),1}_{\tau^{x,1}_\pi}}{r-\mu_1}-I\right) \right]  + \frac{\pi}{\hat{\pi}}(1-\hat{\pi}) \mathbb{E} \left[ e^{-r\tau^{x,0}_\pi} \left( \frac{X^{b(\hat{\pi}),0}_{\tau^{x,0}_\pi}}{r-\mu_0}-I\right) \right] \\
			&\quad + \left( 1-\pi - \frac{\pi}{\hat{\pi}}(1-\hat{\pi}) \right) \mathbb{E} \left[ e^{-r\tau^{x,0}_\pi} \left( \frac{X^{b(\hat{\pi}),0}_{\tau^{x,0}_\pi}}{r-\mu_0}-I\right) \right] \Bigg) + \left( \frac{x}{b(\hat{\pi})} -1 \right) \mathbb{E} \left[ e^{-r\tau^{x,\pi}_\pi} I \right] \\
			&= \frac{x}{b(\hat{\pi})} \frac{\pi}{\hat{\pi}}  \mathbb{E} \left[ e^{-r\tau^{x,\hat{\pi}}_\pi} \left( \frac{X^{b(\hat{\pi}),\hat{\pi}}_{\tau^{x,\hat{\pi}}_\pi}}{r-\mu_{\theta^{\hat{\pi}}}}-I\right) \right] \\
			&\quad + \frac{x}{b(\hat{\pi})} \left( 1- \frac{\pi}{\hat{\pi}} \right) \mathbb{E} \left[ e^{-r\tau^{x,0}_\pi} \left( \frac{X^{b(\hat{\pi}),0}_{\tau^{x,0}_\pi}}{r-\mu_0}-I\right) \right]  + \left( \frac{x}{b(\hat{\pi})} -1 \right) \mathbb{E} \left[ e^{-r\tau^{x,\pi}_\pi} I \right].
		\end{align*}
		Using \eqref{eq:TauDecomposition}~and~\eqref{eq:xShift} for the first equality and \eqref{eq:UpperBoundForBoundaryFunctionEquation} for the second, we observe that
		\begin{align*}
			V(x,\pi) 
			&= \frac{x}{b(\hat{\pi})} \frac{\pi}{\hat{\pi}}  \mathbb{E} \left[ e^{-r\tau^{x,\hat{\pi}}_\pi} \left( \frac{X^{b(\hat{\pi}),\hat{\pi}}_{\tau^{x,\hat{\pi}}_\pi}}{r-\mu_{\theta^{\hat{\pi}}}}-I\right) \right] \\
			&\quad + \frac{x}{b(\hat{\pi})} \left( 1- \frac{\pi}{\hat{\pi}} \right) \left( \frac{b(\hat{\pi})}{b(0)} \mathbb{E} \left[ e^{-r\tau^{x,0}_\pi} \left( \frac{X^{b(0),0}_{\tau^{x,0}_\pi}}{r-\mu_0}-I\right) \right] + \left( \frac{b(\hat{\pi}) }{b(0)} -1\right) \mathbb{E} \left[ e^{-r\tau^{x,0}_\pi} I \right] \right) \\
			&\quad + \left( \frac{x}{b(\hat{\pi})} -1 \right) \left( (1-\pi) \mathbb{E} \left[ e^{-r\tau^{x,0}_\pi} I \right] + \pi \mathbb{E} \left[ e^{-r\tau^{x,1}_\pi} I \right] \right) \\
			&= \frac{x}{b(\hat{\pi})} \frac{\pi}{\hat{\pi}}  \mathbb{E} \left[ e^{-r\tau^{x,\hat{\pi}}_\pi} \left( \frac{X^{b(\hat{\pi}),\hat{\pi}}_{\tau^{x,\hat{\pi}}_\pi}}{r-\mu_{\theta^{\hat{\pi}}}}-I\right) \right] + \frac{x}{b(0)} \left( 1- \frac{\pi}{\hat{\pi}} \right) \mathbb{E} \left[ e^{-r\tau^{x,0}_\pi} \left( \frac{X^{b(0),0}_{\tau^{x,0}_\pi}}{r-\mu_0}-I\right) \right] \\
			&\quad + \left( \frac{x}{b(\hat{\pi})} -1 \right) \pi \mathbb{E} \left[ e^{-r\tau^{x,1}_\pi} I \right].
		\end{align*}
		Utilizing that $x > b(\hat{\pi})$, we get
		\begin{align*}
			V(x,\pi) 
			&\le \frac{x}{b(\hat{\pi})} \frac{\pi}{\hat{\pi}} V(b(\hat{\pi}),\hat{\pi}) + \frac{x}{b(0)} \left( 1- \frac{\pi}{\hat{\pi}} \right) V(b(0),0) + \left( \frac{x}{b(\hat{\pi})} -1 \right) \pi I \\
			&= \frac{x}{b(\hat{\pi})} \frac{\pi}{\hat{\pi}} g(b(\hat{\pi}),\hat{\pi}) + \frac{x}{b(0)} \left( 1- \frac{\pi}{\hat{\pi}} \right) g(b(0),0) + \left( \frac{x}{b(\hat{\pi})} -1 \right) \pi I \\
			&= x \left( \frac{1-\pi}{r-\mu_0} + \frac{\pi}{r-\mu_1} \right) - I \left( 1 + (1-\pi) \left(\frac{x}{b(\hat{\pi})}-1 \right) - \frac{x}{b(\hat{\pi})} \left(1- \frac{\pi}{\hat{\pi}} \right) \left( 1 -\frac{b(\hat{\pi})}{b(0)} \right) \right).
		\end{align*}
		By \eqref{eq:UpperBoundForBoundaryFunctionEquation} this means that $V(x,\pi) = g(x,\pi)$, which implies $(x, \pi) \in D$. 
		Hence, by definition of $b$, we have $b(\pi) \le x$, which, in turn, yields
		\begin{displaymath}
			b(\hat{\pi}) \ge b(\pi) \left[ 1 - \frac{\hat{\pi}-\pi}{\hat{\pi} (1-\pi)} \left( 1 - \frac{b(\hat{\pi})}{b(0)} \right) \right].
		\end{displaymath}
		Utilizing this inequality and $b(0) \ge b(\pi) \ge b(\hat{\pi}) \ge b(1)$, we obtain
		\begin{displaymath}
			0 
			\le b(\pi) - b(\hat{\pi}) 
			\le b(\pi) \frac{\hat{\pi}-\pi}{\hat{\pi} (1-\pi)} \left( 1 - \frac{b(\hat{\pi})}{b(0)} \right)
			\le |\hat{\pi}-\pi| \frac{b(0)-b(1)}{\hat{\pi} (1-\pi)}.
		\end{displaymath}
		Hence, $b$ is locally Lipschitz continuous on $(0,1)$.
	\end{proof}
	\begin{lemma}
		The boundary function $b$ is right-continuous on $0$.
	\end{lemma}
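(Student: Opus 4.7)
The aim is to show $\lim_{\pi \downarrow 0} b(\pi) = b(0) = x_0^\ast$. By Lemma~\ref{lem:BoundaryFunctionDecreasing}, $b$ is decreasing and bounded between $x_1^\ast$ and $x_0^\ast$, so the right limit $L := \lim_{\pi \downarrow 0} b(\pi)$ exists and automatically satisfies $L \le b(0) = x_0^\ast$. The task therefore reduces to proving the reverse inequality $L \ge x_0^\ast$.

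For this, my plan is to squeeze $b$ from below by $\underline{b}$ and exploit the fact that $\underline{b}$ is already continuous at $0$ with the correct limiting value. The function $\underline{b}$ defined in \eqref{eq:LowerBoundForBoundaryFunction} is a quotient of affine functions in $\pi$ whose denominator equals $(\beta_0-1)/(r-\mu_0) > 0$ at $\pi = 0$, so $\underline{b}$ is continuous there, and a direct evaluation yields $\underline{b}(0) = \beta_0 (r-\mu_0) I / (\beta_0-1) = x_0^\ast$. Combining this with the pointwise bound $b(\pi) \ge \underline{b}(\pi)$ from Lemma~\ref{lem:BoundaryFunctionLowerBound} and passing to the limit gives $L \ge \lim_{\pi \downarrow 0} \underline{b}(\pi) = x_0^\ast$, which closes the squeeze.

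There is essentially no real obstacle here: once Lemma~\ref{lem:BoundaryFunctionLowerBound} is available and its boundary value agrees with $b(0)$, the argument is a soft monotone-plus-lower-bound sandwich. The only point that deserves care is verifying the denominator is bounded away from zero near $\pi = 0$, which is immediate from $\beta_0 > 1$ and $r > \mu_0$. The symmetric statement that $b$ is left-continuous at $1$ should follow from the analogous observation that $\underline{b}(1) = x_1^\ast = b(1)$.
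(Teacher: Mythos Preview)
Your proof of the lemma itself is correct and essentially identical to the paper's: monotonicity gives the upper bound on the right limit, and the continuous lower bound $\underline{b}$ with $\underline{b}(0)=x_0^\ast$ gives the matching lower bound.

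However, your closing remark that left-continuity at $1$ ``should follow from the analogous observation'' is wrong, and it is worth understanding why. Since $b$ is decreasing, the one-sided limit satisfies $b(1-) \ge b(1)$, so the inequality you need is $b(1-) \le b(1)$. But the lower bound $b \ge \underline{b}$ only yields $b(1-) \ge \underline{b}(1-) = \underline{b}(1) = b(1)$, which is the direction you already have for free. The squeeze collapses because at $\pi=1$ the lower bound touches $b$ from the wrong side. This is precisely why the paper's proof of Lemma~\ref{lem:BoundaryFunctionLeftContinuousOn1} abandons the sandwich approach entirely and instead argues by contradiction, comparing the payoff of a carefully chosen hitting-time strategy against $g(b(1),\pi)$ for $\pi$ close to $1$.
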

	\begin{proof}
		By Lemma \ref{lem:BoundaryFunctionDecreasing}, the boundary function $b$ is decreasing. 
		Hence, $b(0+) := \lim_{\pi \downarrow 0} b(\pi)$ exists and $b(0+) \le b(0)$ holds. 
		Since $\underline{b}$ from \eqref{eq:LowerBoundForBoundaryFunction} is continuous and is a lower bound for $b$, we have $b(0+) \ge \lim_{\pi \downarrow 0} \underline{b}(\pi) = b(0)$.
	\end{proof}
	
	\begin{lemma}
		\label{lem:BoundaryFunctionLeftContinuousOn1}
		The boundary function $b$ is left-continuous at $1$.
	\end{lemma}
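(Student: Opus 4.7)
Since $b$ is decreasing (Lemma~\ref{lem:BoundaryFunctionDecreasing}), the limit $b(1-) := \lim_{\pi \uparrow 1} b(\pi)$ exists and satisfies $b(1-) \ge b(1) = x_1^*$. The plan is to establish the reverse inequality by contradiction: assume $b(1-) > x_1^*$ and fix any $x_0 \in (x_1^*, b(1-))$. Then $b(\pi) \ge b(1-) > x_0$ for all $\pi \in [0, 1)$, so $(x_0, \pi) \notin D$ and $\tau^{x_0, 1}_\pi > 0$ a.s. The goal is to show that, uniformly in $\pi$, the stopping time $\tau^{x_0, 1}_\pi$ is strictly suboptimal in the full-information $\mu_1$-problem by a $\pi$-independent positive amount; combined with Lemma~\ref{lem:EssentialIdentity} and the vanishing weight $1-\pi$, this forces $V(x_0, \pi) < g(x_0, \pi)$ eventually, a contradiction.

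The key observation is that $\pi < 1$ implies $f(t, x_0, \pi, y) < 1$ for every $t \ge 0$ and $y > 0$ (directly from the formula in Proposition~\ref{prop:PiExplicitSolution}), so the monotonicity of $b$ yields $b(f(t, x_0, \pi, X^{x_0, 1}_t)) \ge b(1-)$ a.s. for every finite $t$, whence
\begin{equation*}
    \tau^{x_0, 1}_\pi \ge \tau_{b(1-)} := \inf\bigl\{t \ge 0 : X^{x_0, 1}_t \ge b(1-)\bigr\} > 0 \quad \text{a.s.},
\end{equation*}
the strict positivity being a consequence of $x_0 < b(1-)$. Writing $V_1(\cdot) := V(\cdot, 1)$ for the value function of the full-information $\mu_1$-problem (so $V_1(x) = g(x, 1)$ on $[x_1^*, \infty)$), the process $e^{-rt} V_1(X^{x_0, 1}_t)$ is a non-negative supermartingale. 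Since $X^{x_0, 1}_{\tau^{x_0, 1}_\pi} \ge b(1-) > x_1^*$ on $\{\tau^{x_0, 1}_\pi < \infty\}$ (where $V_1$ and $g(\cdot, 1)$ agree), optional stopping applied to $\tau_{b(1-)} \le \tau^{x_0, 1}_\pi$ together with the classical identity $\mathbb{E}\bigl[e^{-r\tau_{b(1-)}} \mathbb{I}_{\{\tau_{b(1-)} < \infty\}}\bigr] = (x_0/b(1-))^{\beta_1}$ gives
\begin{equation*}
    \mathbb{E}\left[e^{-r\tau^{x_0, 1}_\pi}\left(\frac{X^{x_0, 1}_{\tau^{x_0, 1}_\pi}}{r-\mu_1} - I\right)\right] \le g(b(1-), 1)\left(\frac{x_0}{b(1-)}\right)^{\beta_1}.
\end{equation*}

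Setting $c := g(x_0, 1) - g(b(1-), 1)(x_0/b(1-))^{\beta_1}$, a direct computation using $x_1^* = \beta_1 (r-\mu_1) I/(\beta_1 - 1)$ shows that $K \mapsto g(K, 1)(x_0/K)^{\beta_1}$ has derivative $(x_0/K)^{\beta_1}\bigl[(1-\beta_1)/(r-\mu_1) + \beta_1 I/K\bigr]$, which vanishes at $K = x_1^*$ and is strictly negative for $K > x_1^*$; since $b(1-) > x_0 > x_1^*$, this yields $c > 0$, and manifestly $c$ does not depend on $\pi$. Combining with Lemma~\ref{lem:EssentialIdentity} (applied with $\hat x = x_0$ and $\hat \pi = \pi$), using the trivial bound $\mathbb{E}\bigl[e^{-r\tau^{x_0, 0}_\pi}(X^{x_0, 0}_{\tau^{x_0, 0}_\pi}/(r-\mu_0) - I)\bigr] \le V(x_0, 0)$, and subtracting $g(x_0, \pi) = (1-\pi) g(x_0, 0) + \pi g(x_0, 1)$ yields
\begin{equation*}
    V(x_0, \pi) - g(x_0, \pi) \le (1-\pi)\bigl(V(x_0, 0) - g(x_0, 0)\bigr) - \pi c,
\end{equation*}
which is strictly negative for $\pi$ sufficiently close to $1$, contradicting $V \ge g$.

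The main obstacle is producing the uniform-in-$\pi$ positive gap $c$; the essential ingredient is that the monotonicity of $b$ together with $f < 1$ for $\pi < 1$ forces $\tau^{x_0, 1}_\pi$ to dominate the $\pi$-independent hitting time $\tau_{b(1-)}$, which renders $\tau^{x_0, 1}_\pi$ strictly suboptimal in the full-information $\mu_1$-problem (for which $\tau = 0$ is the unique optimal rule on $[x_1^*, \infty)$) by a definite amount that survives the weighting by $\pi \in (0, 1)$.
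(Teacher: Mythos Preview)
Your proof is correct. Both your argument and the paper's hinge on the same key observation: since $\Pi^\pi_t \in (0,1)$ for $\pi \in (0,1)$, monotonicity of $b$ gives $b(\Pi^\pi_t) \ge b(1-)$, so the (candidate) optimal stopping time is bounded below by a $\pi$-independent hitting time of a level in $(x_1^\ast, b(1-)]$. Both then exploit the same calculus fact that $K \mapsto g(K,1)K^{-\beta_1}$ is strictly decreasing on $(x_1^\ast,\infty)$. The routes differ in how the bound is produced: the paper starts from $(b(1),\pi)$, applies the strong Markov property of the two-dimensional process $(X,\Pi)$ at the hitting time of $x=\delta b(1)$, and bounds $V(x,\Pi^\pi_\gamma)\le V(x,1)$; this yields $\delta^{-\beta_1}g(x,1)>g(b(1),\pi)$ for all $\pi\in(0,1)$, and the contradiction is closed via continuity of $g(b(1),\cdot)$ at $\pi=1$. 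You instead start at $(x_0,\pi)$, use the decomposition of Lemma~\ref{lem:EssentialIdentity} together with the supermartingale property of the one-dimensional Snell envelope $e^{-rt}V_1(X^{x_0,1}_t)$, and obtain a $\pi$-independent gap $c>0$ directly. Your route avoids the strong Markov property of the two-dimensional process and the final continuity step, at the (minor) cost of invoking optional stopping for the non-negative supermartingale $e^{-rt}V_1(X^{x_0,1}_t)$; the paper's route stays closer to the two-dimensional formulation used elsewhere.
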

	\begin{proof} 
		By Lemma~\ref{lem:BoundaryFunctionDecreasing} the boundary function $b$ is decreasing. 
		Hence, $b(1-) := \lim_{\pi \uparrow 1} b(\pi)$ exists and satisfies $b(1-) \ge b(1)$. 
		It remains to prove the reverse inequality: 
		By way of contradiction, we assume that there is an $x \in (0,\infty)$ such that $b(1)< x < b(1-)$. 
		We set $\delta := x / b(1)$ and note that $\delta>1$. 
		Let $\pi \in (0,1)$ be arbitrary. 
		Then we define the stopping time 
		\begin{displaymath}
			\gamma^{b(1),\pi}_\delta = \inf\{t \ge 0 : X^{b(1),\pi}_t \ge \delta b(1) \},
		\end{displaymath} 
		which admits the decomposition $\gamma^{b(1),\pi}_\delta = \mathbb{I}_{\{\theta^\pi = 0\}} \gamma^{b(1),0}_\delta + \mathbb{I}_{\{\theta^\pi = 1\}} \gamma^{b(1),1}_\delta$. 
		Since $\Pi^\pi_t \in (0,1)$ for all $t \ge 0$~a.s., we have that $b(\Pi_t^\pi) \ge b(1-) \ge \delta b(1)$ for all $t \ge 0$~a.s. 
		Hence, $\tau^{b(1),\pi}_\pi \ge  \gamma^{b(1),\pi}_\delta$~a.s. 
		This fact allows us to rewrite the  optimal stopping time for the stopping problem $V(x,\pi)$ as 
		\begin{displaymath}
			\tau_\ast (b(1), \pi) = \gamma_\delta^{b(1), \pi} + \tau_\ast \left(x,\Pi^\pi_{\gamma_\delta^{b(1), \pi}}\right) \circ \theta_{\gamma_\delta^{b(1),\pi}}.
		\end{displaymath}
	 	This and the strong Markov property (e.g., \cite[p.322, Theorem 4.20]{KaratzasShreve1998}) yields
		\begin{align*}
			g(b(1),\pi) &\le V(b(1), \pi) 
			= \mathbb{E} \left[ \mathbb{E} \left[ e^{-r \tau_\ast (b(1), \pi)} g\left( X_{\tau_\ast (b(1), \pi)}^{b(1),\pi}, \Pi^\pi_{\tau_\ast (b(1), \pi)} \right) \middle| \mathcal{F}_{\gamma_\delta^{b(1),\pi}}^{X^{b(1), \pi}, \Pi^\pi} \right] \right] \\
			&= \mathbb{E} \left[ e^{-r \gamma_\delta^{b(1), \pi}} 
			\mathbb{E} \left[ e^{-r \tau_\ast \left(x,\Pi^\pi_{\gamma_\delta^{b(1), \pi}}\right)}
			g \left( X_{\tau_\ast \left(x,\Pi^\pi_{\gamma_\delta^{b(1), \pi}}\right)}^{x, \Pi^\pi_{\gamma_\delta^{b(1), \pi}}}, 
			\Pi^{\Pi^\pi_{\gamma_\delta^{b(1), \pi}}}_{\tau_\ast \left(x,\Pi^\pi_{\gamma_\delta^{b(1), \pi}}\right)} 
			\right) 
			\right] \right] \\
			&= \mathbb{E} \left[  e^{-r \gamma_\delta^{b(1), \pi}} V\left(x, \Pi^\pi_{\gamma_\delta^{b(1), \pi}} \right) \right].
		\end{align*}
		By Lemma~\ref{lem:ValueFunctionIncreasing}, the value function $V$ is increasing in its second argument. 
		With this and \cite[p.622, Eq. 2.0.1]{BorodinSalminen2002}, we obtain 
		\begin{align*}
			g(b(1),\pi) 
			&\le V(x,1)  \mathbb{E} \left[  e^{-r \gamma_\delta^{b(1), \pi}} \right] 
			= V(x,1) \left( (1-\pi) \mathbb{E} \left[ e^{-r\gamma^{b(1),0}_\delta}  \right] + \pi  \mathbb{E} \left[ e^{-r\gamma^{b(1),1}_\delta}  \right] \right) \\
			&= V(x,1) \left( (1-\pi) \delta^{-\beta_0} + \pi \delta^{-\beta_1} \right).
		\end{align*}
		Since $x \ge b(1)$, we have $V(x,1)=g(x,1)$. 
		Using this and the fact that $\beta_0 > \beta_1$, we obtain 
		\begin{equation}
			\label{eq:BoundaryFunctionLeftContinuousIn1Contradiction}
				 \delta^{-\beta_1} g(x,1) - g(b(1),\pi) >0.
		\end{equation}
		Recall that $b(1) = \frac{\beta_1}{\beta_1-1} (r-\mu_1) I $ and $x= \delta b(1)$. 
		Hence, we obtain 
		\begin{displaymath}
			\delta^{-\beta_1} g(x,1) - g(b(1),1)
			= I \left( \frac{\beta_1}{\beta_1 -1} ( \delta^{1-\beta_1}-1) - 	\delta^{-\beta_1} + 1 \right).
		\end{displaymath}
		We observe that $h(\hat{\delta}):= \frac{\beta_1}{\beta_1 -1} ( \hat{\delta}^{1-\beta_1}-1) - \hat{\delta}^{-\beta_1} + 1$ satisfies $h(1)=0$ and  $h'(\hat{\delta}) = - \beta_1 \hat{\delta}^{-\beta_1} (1-\hat{\delta}^{-1})$.
		Since $h'(\hat{\delta})<0$ for all $\hat{\delta}>1$, we have $h(\hat{\delta})<0$ for all $\hat{\delta}>1$. 
		Hence, $\delta^{-\beta_1} g(x,1) - g(b(1),1)<0$. 
		By continuity of $g$, there is a $\pi \in (0,1)$ such that $\delta^{-\beta_1} g(x,1) - g(b(1),\pi)<0$, which contradicts \eqref{eq:BoundaryFunctionLeftContinuousIn1Contradiction}.
	\end{proof}
	
	%%%%%%%%%%%%%%%%%%%%%%%%%%%%%%%%%%%%%%%%%%%%%%%%%%%%%%%%%%%%%%%%%%%%%%%%%%%%%%%%%%%%%
	\subsection{Nonlinear Integral Equation for the Boundary Function}
	\label{sec:Integral_boundary}
	Our goal for the remainder of this section is to derive an integral equation, which is the basis of our numerical iteration scheme. 
	More precisely, we prove that the boundary function is the unique solution of an integral equation in the class of all functions that satisfy the conditions of Theorem~\ref{thm:PropertiesBoundaryFunction}. A similar approach has been successfully used in \cite{ChristensenCrocceMordeckiSalminen2019, ChristensenSalminen2018, DammannFerrari2022,DetempleKitapbayev2020} among others. 
	\begin{theorem}
		\label{thm:NonlinerIntegralEquation}
		The boundary function $b$ is the unique function $a \in \mathcal{M}$ with
		\begin{displaymath}
			\mathcal{M} := \{ a:[0,1] \to [x^*_1,x^*_0] \text{ continuous, decreasing with } \underline{b}(\pi) \le a(\pi) \text{ for all }\pi \in [0,1] \text{ and } a(1)=x^*_1 \}
		\end{displaymath} 
		such that
		\begin{equation}
			\label{eq:NonlinearIntegralEquation}
			\mathbb{E} \left[ \int_0^\infty e^{-rt} (X^{a(\pi), \pi}_t -rI) \mathbb{I}_{\{X^{a(\pi), \pi}_t \le a(\Pi^\pi_t)\}} \, dt \right] = 0
		\end{equation}
		for all $\pi \in [0,1]$.
	\end{theorem}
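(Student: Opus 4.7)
The plan is to establish the statement in two parts: first, that the boundary function $b$ satisfies \eqref{eq:NonlinearIntegralEquation}, and second, that it is the only element of $\mathcal{M}$ with this property.

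For the existence direction, the approach is a generator computation combined with an Itô/Dynkin expansion under smooth fit. A direct calculation from \eqref{eq:XPiDynamics} shows that the infinitesimal generator $\mathcal{L}$ of $(X^{x,\pi},\Pi^\pi)$ satisfies $(\mathcal{L}-r)g(\hat{x},\hat{\pi}) = -\hat{x}+rI$ on the whole state space (the affine structure in $\hat{x}$ means only two scalar identities need to be checked, and the cross- and diffusion-terms in $\Pi^\pi$ combine with the drift to cancel). Applying Itô's formula to $e^{-rt}g(X^{x,\pi}_t,\Pi^\pi_t)$ and letting $t\to\infty$, where $(e^{-rt}X^{x,\pi}_t)_{t\ge 0}$ is a supermartingale vanishing in the limit, yields $g(x,\pi)=\mathbb{E}\bigl[\int_0^\infty e^{-rt}(X^{x,\pi}_t-rI)\,dt\bigr]$. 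For the value function, $(\mathcal{L}-r)V=0$ on the continuation region by the HJB equation, while $V=g$ on $D$, so $(\mathcal{L}-r)V = (-\hat{x}+rI)\mathbb{I}_D$. Applying a generalized Itô formula of Itô--Tanaka type to $e^{-rt}V(X^{x,\pi}_t,\Pi^\pi_t)$ (justified by smooth fit, discussed next) and once again letting $t\to\infty$ produces $V(x,\pi)=\mathbb{E}\bigl[\int_0^\infty e^{-rt}(X^{x,\pi}_t-rI)\mathbb{I}_{\{X^{x,\pi}_t\ge b(\Pi^\pi_t)\}}\,dt\bigr]$. Subtracting the two representations and specializing to $x=b(\pi)$, where $V=g$, gives exactly \eqref{eq:NonlinearIntegralEquation}.

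The principal technical prerequisite is the smooth-fit condition $V_x(b(\pi)-,\pi)=g_x(b(\pi),\pi)$, which legitimizes the Itô step across the free boundary. The upper bound follows from $V\ge g$ with equality at $x=b(\pi)$: dividing $V(b(\pi),\pi)-V(b(\pi)-h,\pi)\le g(b(\pi),\pi)-g(b(\pi)-h,\pi)$ by $h$ and letting $h\downarrow 0$ gives $\limsup\le g_x(b(\pi),\pi)$. For the matching lower bound, I would apply \eqref{eq:xShift} with $\hat{x}=b(\pi)$, $x=b(\pi)-h$, and $\tau=\tau_\ast(b(\pi)-h,\pi)$. Combined with $\mathbb{E}[e^{-r\tau}g(X^{b(\pi),\pi}_\tau,\Pi^\pi_\tau)]\le V(b(\pi),\pi)$ and $V(b(\pi)-h,\pi)=\mathbb{E}[e^{-r\tau}g(X^{b(\pi)-h,\pi}_\tau,\Pi^\pi_\tau)]$, rearranging produces
\[
\frac{V(b(\pi),\pi)-V(b(\pi)-h,\pi)}{h}\ge \frac{1}{b(\pi)-h}\Bigl(V(b(\pi)-h,\pi)+\mathbb{E}[e^{-r\tau_\ast(b(\pi)-h,\pi)}]\,I\Bigr).
\]
Continuity of $V$ from Lemma~\ref{lem:ValueFunctionLipschitz} together with $\tau_\ast(b(\pi)-h,\pi)\to 0$ in probability as $h\downarrow 0$ (a consequence of right-continuity of $b$ from Theorem~\ref{thm:PropertiesBoundaryFunction} and the path continuity of $X^{x,\pi}$) then give $\liminf\ge g_x(b(\pi),\pi)$.

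For uniqueness, the plan is to follow the Markov-process framework of \cite{ChristensenCrocceMordeckiSalminen2019}. Given any $a\in\mathcal{M}$ solving \eqref{eq:NonlinearIntegralEquation}, define the candidate value
\[
V^a(x,\pi):=\mathbb{E}\Bigl[\int_0^\infty e^{-rt}(X^{x,\pi}_t-rI)\mathbb{I}_{\{X^{x,\pi}_t\ge a(\Pi^\pi_t)\}}\,dt\Bigr]
\]
and the first-entry time $\tau_a:=\inf\{t\ge 0:X^{x,\pi}_t\ge a(\Pi^\pi_t)\}$. The integral equation together with the identity $g(x,\pi)=\mathbb{E}\bigl[\int_0^\infty e^{-rt}(X^{x,\pi}_t-rI)\,dt\bigr]$ established above forces $V^a(a(\pi),\pi)=g(a(\pi),\pi)$ for every $\pi\in[0,1]$. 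Monotonicity of $a$ together with the functional representation $\Pi^\pi_t = f(t,x,\pi,X^{x,\pi}_t)$ from Proposition~\ref{prop:PiExplicitSolution} allows one to apply the strong Markov property at $\tau_a$ to extend the equality $V^a=g$ to the whole set $\{x\ge a(\pi)\}$ and to show that $V^a$ is $r$-excessive, yielding $V^a\ge V$; admissibility of $\tau_a$ for the original problem yields $V^a\le V$. Hence $V^a=V$, which identifies the stopping regions and forces $a=b$.

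I expect the smooth-fit step, together with the auxiliary convergence $\tau_\ast(b(\pi)-h,\pi)\to 0$, to be the main technical obstacle, because $D$ is not a priori known to have $C^1$ boundary before \eqref{eq:NonlinearIntegralEquation} is established. A subsidiary difficulty is adapting the result of \cite{ChristensenCrocceMordeckiSalminen2019} to the degenerate diffusion of $\Pi^\pi$; here the explicit coupling through $f$ from Proposition~\ref{prop:PiExplicitSolution} is precisely the ingredient that keeps the two-dimensional process Markovian and makes the adaptation feasible.
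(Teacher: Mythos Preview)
Your overall architecture matches the paper's proof closely: the existence direction is obtained exactly as you describe---compute $(\mathcal{L}-r)g=rI-\hat{x}$, derive the integral representation of $V$ via a Dynkin-type argument justified by smooth fit, subtract the analogous representation of $g$, and evaluate at $x=b(\pi)$ (this is the paper's Lemma~\ref{lem:ValueFunctionRepresentation}). The uniqueness part also rests on \cite{ChristensenCrocceMordeckiSalminen2019}, as you propose.

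There is, however, one genuine gap. You argue smooth fit only in the $x$-direction, but in two dimensions this does not suffice to legitimize the It\^{o} step across $\partial D$: the process $(X^{x,\pi},\Pi^\pi)$ diffuses in both coordinates and crosses the graph of $b$ obliquely, so any local-time contribution is governed by the full gradient of $V-g$ along the boundary, not just its $x$-partial. The paper therefore also proves $\partial_\pi V(b(\pi),\pi)=\partial_\pi g(b(\pi),\pi)$ (Lemma~\ref{lem:ValueFunctionC^1}), by an argument parallel to your $x$-argument but based on the decomposition in Lemma~\ref{lem:EssentialIdentity} rather than on \eqref{eq:xShift}. With full $C^1$ regularity in hand, together with the local Lipschitz continuity of $b$ from Theorem~\ref{thm:PropertiesBoundaryFunction}, the paper uses a mollification argument (\cite[Theorem~D.1]{Oksendal2003}) instead of an It\^{o}--Tanaka formula; your route would also work, but only once $\pi$-smooth fit is established.

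On uniqueness, the paper does not rerun the excessivity argument you sketch but invokes \cite[Theorem~2.3]{ChristensenCrocceMordeckiSalminen2019} directly, which reduces the task to two probabilistic conditions: roughly, that starting on one side of a candidate boundary the process spends positive Lebesgue time in the gap between two candidates with positive probability. This is where Proposition~\ref{prop:PiExplicitSolution} is actually used---not to ``keep the process Markovian'' (it already is), but to control $\Pi^\pi_t$ pathwise through $X^{x,\pi}_t$ and thereby exhibit explicit trajectories that enter the gap before exiting the relevant region. Your high-level plan is compatible with this, but the concrete verification is where the degeneracy of $\Pi^\pi$ matters, and it is more delicate than your last paragraph suggests.
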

	
	In the remainder of this subsection we prove this theorem relying on the free boundary approach to optimal stopping. First we prove that the value function is continuously differentiable, then we establish an integral representation of the value function. This allows us to prove the result.

	As usual, the infinitesimal generator $\mathcal{L}$ of the process $(X^{\hat{x},\hat{\pi}},\Pi^{\hat{\pi}})$  from \eqref{eq:XPiDynamics}  is given by
	\begin{align*}
		\mathcal{L} u(x,\pi)
		&=  \left(\mu_0 + \pi (\mu_1-\mu_0) \right) x \frac{\partial}{\partial x} u(x,\pi)
		+ \frac{1}{2}\sigma^2 x^2 \frac{\partial^2}{\partial x^2} u(x,\pi)
		+ x (\mu_1-\mu_0) \pi (1-\pi) \frac{\partial^2}{\partial x \partial \pi} u(x,\pi) \\
		&\quad + \frac{1}{2} \left(\frac{\mu_1-\mu_0}{\sigma}\right)^2 \pi^2 (1-\pi)^2 \frac{\partial^2}{\partial \pi^2} u(x,\pi)
	\end{align*} 
	defined for all $u \in C^2((0,\infty) \times (0,1))$ and $(x, \pi) \in (0, \infty) \times (0,1)$.
	Since the first entry time into the set $D$ is optimal, we obtain by 
	\cite[Ch. III, Sec. 7.1]{PeskirShiryaev2006} that the continuous value function $V$ is actually $C^2((0,\infty) \times (0,1) \setminus \partial D)$ and solves the free boundary problem
	\begin{align}
		\label{eq:FreeBoundaryProblem}
		\begin{split}
		(\mathcal{L}-r)V &= 0 \quad \text{on } (0,\infty) \times (0,1) \setminus D \\
		V &= g \quad \text{on } D.
		\end{split}
	\end{align}
	
	Moreover, we obtain that in our setting the value function $V$ also satisfies the smooth fit conditions:
	\begin{lemma}
		\label{lem:ValueFunctionC^1}
		The value function $V$ is $C^1((0,\infty) \times (0,1))$ and satisfies
		\begin{displaymath}
			\frac{\partial}{\partial x} V(b(\pi),\pi) = \frac{\partial}{\partial x} g(b(\pi),\pi) \quad \text{and} \quad	\frac{\partial}{\partial \pi} V(b(\pi),\pi) = \frac{\partial}{\partial \pi} g(b(\pi),\pi) \quad \text{for all } \pi \in (0,1).
		\end{displaymath}
	\end{lemma}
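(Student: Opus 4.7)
The plan is to combine the $C^2$ regularity of $V$ on $(0,\infty)\times(0,1)\setminus\partial D$ stated just before the lemma with a matching of one-sided partial derivatives across the free boundary $\partial D=\{(b(\pi),\pi):\pi\in(0,1)\}$. Since $V$ is continuous by Lemma~\ref{lem:ValueFunctionLipschitz}, establishing smooth fit in both arguments at every boundary point is exactly what is needed to conclude $V\in C^1((0,\infty)\times(0,1))$. The heart of the proof is therefore the two smooth-fit identities.

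For the $x$-derivative, fix $\pi\in(0,1)$. Since $V=g$ on $D$ and $(b(\pi)+h,\pi)\in D$, the right derivative is trivially $\partial_x g(b(\pi),\pi)$. The upper bound for the left derivative follows from $V\ge g$ together with $V(b(\pi),\pi)=g(b(\pi),\pi)$. For the matching lower bound I would apply $\tau_h:=\tau^{b(\pi)-h,\pi}_\pi$ as a suboptimal stopping time at $(b(\pi),\pi)$ and invoke the scaling identity~\eqref{eq:xShift} (with $\hat x=b(\pi)$, $x=b(\pi)-h$), obtaining
$$V(b(\pi),\pi)\ge\mathbb{E}\bigl[e^{-r\tau_h}g(X^{b(\pi),\pi}_{\tau_h},\Pi^\pi_{\tau_h})\bigr]=\tfrac{b(\pi)}{b(\pi)-h}V(b(\pi)-h,\pi)+\tfrac{h}{b(\pi)-h}\mathbb{E}[e^{-r\tau_h}I].$$
Rearranging yields
$$\frac{V(b(\pi),\pi)-V(b(\pi)-h,\pi)}{h}\ge\frac{V(b(\pi),\pi)+\mathbb{E}[e^{-r\tau_h}I]}{b(\pi)},$$
whose right-hand side tends to $(g(b(\pi),\pi)+I)/b(\pi)=\partial_x g(b(\pi),\pi)$ as soon as $\mathbb{E}[e^{-r\tau_h}I]\to I$.

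For the $\pi$-derivative, fix $\pi\in(0,1)$. Since $b$ is decreasing (Lemma~\ref{lem:BoundaryFunctionDecreasing}), $(b(\pi),\pi+h)\in D$ and $V=g$ there, so the right derivative is immediate. The upper bound of the left derivative is again $V\ge g$. For the matching lower bound, I would apply $\tau^{b(\pi),\pi}_{\pi-h}$ as a suboptimal rule at $(b(\pi),\pi)$ and abbreviate
$$A(h):=\mathbb{E}\!\left[e^{-r\tau^{b(\pi),0}_{\pi-h}}\!\left(\tfrac{X^{b(\pi),0}_{\tau^{b(\pi),0}_{\pi-h}}}{r-\mu_0}-I\right)\right],\quad C(h):=\mathbb{E}\!\left[e^{-r\tau^{b(\pi),1}_{\pi-h}}\!\left(\tfrac{X^{b(\pi),1}_{\tau^{b(\pi),1}_{\pi-h}}}{r-\mu_1}-I\right)\right].$$
Two applications of Lemma~\ref{lem:EssentialIdentity} give $V(b(\pi),\pi)\ge(1-\pi)A(h)+\pi C(h)$ and $V(b(\pi),\pi-h)=(1-(\pi-h))A(h)+(\pi-h)C(h)$; subtracting leaves $\bigl(V(b(\pi),\pi)-V(b(\pi),\pi-h)\bigr)/h\ge C(h)-A(h)$, and the right-hand side tends to $b(\pi)\bigl(\tfrac{1}{r-\mu_1}-\tfrac{1}{r-\mu_0}\bigr)=\partial_\pi g(b(\pi),\pi)$ provided $\tau^{b(\pi),i}_{\pi-h}\to 0$ in probability for $i\in\{0,1\}$.

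The main obstacle is precisely the vanishing of these perturbed stopping times as $h\downarrow 0$. In the $x$-case, $\tau_h$ is bounded above by the first passage time $\sigma_h$ of the GBM $X^{b(\pi)-h,\pi}$ to the level $b(\pi)$: at time $\sigma_h$ the filter $f(\cdot,b(\pi)-h,\pi,\cdot)$ (read off from Proposition~\ref{prop:PiExplicitSolution}) has shifted above $\pi$, so monotonicity of $b$ places the state in $D$; $\sigma_h\to 0$ in probability follows from a direct computation with the running maximum of the log-GBM. In the $\pi$-case the same kind of argument applies, except that the starting point $(b(\pi),\pi-h)$ lies at $x$-distance $b(\pi-h)-b(\pi)=O(h)$ from the boundary by the Lipschitz estimate of Lemma~\ref{lem:BoundaryFunctionLocallyLipschitz}, and a short-time analysis of the coupled dynamics~\eqref{eq:XPiDynamics} shows this vanishing gap is crossed in $o(1)$. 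Once the vanishing is established, the limits of $\mathbb{E}[e^{-r\tau_h}I]$, $A(h)$, and $C(h)$ follow from bounded convergence combined with the supermartingale bound $\mathbb{E}[e^{-r\tau}X^{b(\pi),i}_\tau]\le b(\pi)$ and Fatou's lemma.
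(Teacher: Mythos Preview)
Your approach coincides with the paper's: both use the scaling identity~\eqref{eq:xShift} for the $x$-smooth-fit and the decomposition Lemma~\ref{lem:EssentialIdentity} for the $\pi$-smooth-fit, sandwiching the one-sided difference quotients and reducing everything to the limits $\tau^{b(\pi)-h,\pi}_\pi\to 0$ and $\tau^{b(\pi),i}_{\pi-h}\to 0$. The paper in fact simply asserts these convergences without argument, so you go further by attempting to justify them.

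That attempted justification, however, has a gap. You claim $\tau_h\le\sigma_h$ because at the first passage of $X^{b(\pi)-h,\pi}$ to the level $b(\pi)$ the filter has shifted above $\pi$. But from Proposition~\ref{prop:PiExplicitSolution} one reads
\[
\Pi^\pi_{\sigma_h}\ge\pi \iff \frac{b(\pi)}{b(\pi)-h}\,\exp\!\Bigl(\tfrac12(\sigma^2-\mu_1-\mu_0)\,\sigma_h\Bigr)\ge 1,
\]
and when $\sigma^2<\mu_1+\mu_0$ this can fail on the event where $\sigma_h$ is large: the time-dependent factor in $f$ can drag the filter below $\pi$ even though $X$ has risen. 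So the pathwise bound $\tau_h\le\sigma_h$ is not valid in general, and the same issue recurs in your $\pi$-case. A repair is to combine the first-passage bound with a restriction to the high-probability event $\{\sigma_h\le t_h\}$ for a suitable $t_h\downarrow 0$ (chosen so that the exponential factor stays close to $1$), or, more cleanly, to argue that $(b(\pi),\pi)$ is a regular boundary point for $D$ with respect to the diffusion~\eqref{eq:XPiDynamics}: the boundary is Lipschitz by Lemma~\ref{lem:BoundaryFunctionLocallyLipschitz} and the diffusion is nondegenerate in the $X$-direction, so the first-entry time is continuous in the starting point and the desired convergences follow.
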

	\begin{proof}
		It suffices to prove that $V$ is continuously differentiable on the boundary $\partial D$. 
		Fix $\pi \in (0,1)$. 
		Then for $\varepsilon >0$ we obtain from \eqref{eq:xShift} that
		\begin{align*}
			V(b(\pi)-\varepsilon, \pi) 
			&= \frac{b(\pi)-\varepsilon}{b(\pi)} \mathbb{E} \left[ e^{-r \tau^{b(\pi)-\varepsilon,\pi}_\pi} g(X^{b(\pi),\pi}_{\tau^{b(\pi)-\varepsilon,\pi}_\pi}, \Pi^{\pi}_{\tau^{b(\pi)-\varepsilon,\pi}_\pi}) \right] + \left( \frac{b(\pi)-\varepsilon}{b(\pi)} - 1 \right) \mathbb{E} \left[ e^{-r\tau^{b(\pi)-\varepsilon,\pi}_\pi} \right] I \\
			&\le \frac{b(\pi)-\varepsilon}{b(\pi)} V(b(\pi),\pi) -\varepsilon \frac{I}{b(\pi)} \mathbb{E} \left[ e^{-r\tau^{b(\pi)-\varepsilon,\pi}_\pi} \right] \\
			&= \frac{b(\pi)-\varepsilon}{b(\pi)} g(b(\pi),\pi) -\varepsilon \frac{I}{b(\pi)} \mathbb{E} \left[ e^{-r\tau^{b(\pi)-\varepsilon,\pi}_\pi} \right] \\
			&= g(b(\pi)-\varepsilon,\pi) + \varepsilon \frac{I}{b(\pi)} \left(1-  \mathbb{E} \left[ e^{-r\tau^{b(\pi)-\varepsilon,\pi}_\pi} \right] \right).
		\end{align*}
		With this we obtain
		\begin{align*}
			g(b(\pi),\pi) - g(b(\pi)-\varepsilon, \pi) 
			&\ge V(b(\pi),\pi) - V(b(\pi)-\varepsilon, \pi) \\
			&\ge g(b(\pi),\pi) - g(b(\pi)-\varepsilon,\pi) - \varepsilon \frac{I}{b(\pi)} \left(1- \mathbb{E} \left[ e^{-r\tau^{b(\pi)-\varepsilon,\pi}_\pi} \right] \right).
		\end{align*}
		Dividing by $\varepsilon$ and taking the limit $\varepsilon \rightarrow 0$, we obtain using that $\mathbb{E} \left[ e^{-r\tau^{b(\pi)-\varepsilon,\pi}_\pi} \right] \to 1$ for $\varepsilon \to 0$ 
		\begin{align*}
			\lim_{\varepsilon \rightarrow 0} \frac{V(b(\pi), \pi) - V(b(\pi) -\varepsilon, \pi)}{\varepsilon} = \lim_{\varepsilon \rightarrow 0} \frac{g(b(\pi), \pi) - g(b(\pi) -\varepsilon, \pi)}{\varepsilon}.
		\end{align*}
		Similarly, we obtain for every $\varepsilon >0$ from Lemma~\ref{lem:EssentialIdentity} that
		\begin{align*}
			V(b(\pi), \pi-\varepsilon) 
			&= (1-\pi+\varepsilon) \mathbb{E} \left[ e^{-r\tau^{b(\pi),0}_{\pi-\varepsilon}} g(X^{b(\pi),0}_{\tau^{b(\pi),0}_{\pi - \varepsilon}}, 0) \right] + (\pi - \varepsilon) \mathbb{E} \left[ e^{-r\tau^{b(\pi),1}_{\pi-\varepsilon}} g(X^{b(\pi),1}_{\tau^{b(\pi),1}_{\pi - \varepsilon}}, 1) \right] \\
			&= \frac{\pi-\varepsilon}{\pi} (1-\pi) \mathbb{E} \left[ e^{-r\tau^{b(\pi),0}_{\pi-\varepsilon}} g(X^{b(\pi),0}_{\tau^{b(\pi),0}_{\pi - \varepsilon}}, 0) \right] 
			+ \frac{\pi-\varepsilon}{\pi} \pi \mathbb{E} \left[ e^{-r\tau^{b(\pi),1}_{\pi-\varepsilon}} g(X^{b(\pi),1}_{\tau^{b(\pi),1}_{\pi - \varepsilon}}, 1) \right] \\
			&\quad + \left( 1 - \pi + \varepsilon - \frac{\pi-\varepsilon}{\pi} (1-\pi) \right) \mathbb{E} \left[ e^{-r\tau^{b(\pi),0}_{\pi-\varepsilon}} g(X^{b(\pi),0}_{\tau^{b(\pi),0}_{\pi - \varepsilon}}, 0) \right] \\
			&\le \frac{\pi-\varepsilon}{\pi} V(b(\pi),\pi) + \frac{\varepsilon}{\pi} \mathbb{E} \left[ e^{-r\tau^{b(\pi),0}_{\pi-\varepsilon}} g(X^{b(\pi),0}_{\tau^{b(\pi),0}_{\pi - \varepsilon}}, 0) \right] \\
			&= \frac{\pi-\varepsilon}{\pi} g(b(\pi),\pi) + \frac{\varepsilon}{\pi} \mathbb{E} \left[ e^{-r\tau^{b(\pi),0}_{\pi-\varepsilon}} g(X^{b(\pi),0}_{\tau^{b(\pi),0}_{\pi - \varepsilon}}, 0) \right] \\
			&= g(b(\pi), \pi-\varepsilon) - \frac{\varepsilon}{\pi} \left( g(b(\pi),0) - \mathbb{E} \left[ e^{-r\tau^{b(\pi),0}_{\pi-\varepsilon}} g(X^{b(\pi),0}_{\tau^{b(\pi),0}_{\pi - \varepsilon}}, 0) \right] \right).
		\end{align*}
		With this we obtain
		\begin{align*}
			g(b(\pi),\pi) - g(b(\pi), \pi-\varepsilon) 
			&\ge V(b(\pi),\pi) - V(b(\pi), \pi-\varepsilon)  \\
			&\ge g(b(\pi),\pi) - g(b(\pi), \pi-\varepsilon) + \frac{\varepsilon}{\pi} \left( g(b(\pi),0) - \mathbb{E} \left[ e^{-r\tau^{b(\pi),0}_{\pi-\varepsilon}} g(X^{b(\pi),0}_{\tau^{b(\pi),0}_{\pi - \varepsilon}}, 0) \right] \right)
		\end{align*}
		Dividing by $\varepsilon$, taking the limit and using $\mathbb{E} \left[ e^{-r\tau^{b(\pi),0}_{\pi - \varepsilon}} g(X^{b(\pi),0}_{\tau^{b(\pi),0}_{\pi - \varepsilon}}, 0) \right] \to g(b(\pi),0)$ for $\varepsilon \to 0$, we have
		\begin{displaymath}
			\lim_{\varepsilon \rightarrow 0} \frac{V(b(\pi), \pi) - V(b(\pi) , \pi-\varepsilon)}{\varepsilon} = \lim_{\varepsilon \rightarrow 0} \frac{g(b(\pi), \pi) - g(b(\pi), \pi -\varepsilon)}{\varepsilon}.
		\end{displaymath} 
		Since $g$ is smooth, the value function is indeed $C^1((0,\infty) \times (0,1))$.
	\end{proof}
	
	\begin{lemma}
		\label{lem:ValueFunctionRepresentation}
		The value function admits the representation
		\begin{displaymath}
			V(x,\pi) = \mathbb{E} \left[ \int_0^\infty e^{-rt} (X^{x,\pi}_t - rI) \mathbb{I}_{\{X^{x,\pi}_t \ge b(\Pi^\pi_t)\}} \, dt \right].
		\end{displaymath}
	\end{lemma}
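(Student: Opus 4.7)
By Lemma~\ref{lem:ValueFunctionC^1} the value function $V$ is $C^1$ on $(0,\infty)\times(0,1)$, and by the free boundary problem~\eqref{eq:FreeBoundaryProblem} it is $C^2$ on each of $(0,\infty)\times(0,1)\setminus D$ and the interior of $D$. Since $\partial D$ is the graph of the continuous boundary $b$ from Theorem~\ref{thm:PropertiesBoundaryFunction}, the $C^1$-matching across $\partial D$ established in Lemma~\ref{lem:ValueFunctionC^1} is precisely the smooth-fit condition that makes the local-time contribution in the Peskir change-of-variables formula \cite{PeskirShiryaev2006} vanish. Hence It\^o's formula holds in integral form for $e^{-rt}V(X^{x,\pi}_t,\Pi^\pi_t)$, yielding
\begin{align*}
e^{-rt}V(X^{x,\pi}_t,\Pi^\pi_t) = V(x,\pi) + \int_0^t e^{-rs}(\mathcal{L}-r)V(X^{x,\pi}_s,\Pi^\pi_s)\,\de s + M_t,
\end{align*}
where $M$ is a continuous local martingale. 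The degenerate cases $\pi\in\{0,1\}$ reduce to the known-drift problem of \cite[Sec.~6.1]{DixitPindyck1994} and would be treated separately; in what follows I fix $\pi\in(0,1)$.

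Next I would identify the drift using the free boundary problem. On the continuation region one has $(\mathcal{L}-r)V=0$ by~\eqref{eq:FreeBoundaryProblem}. On the interior of $D$ we have $V=g$, and a short direct computation --- using $\partial_{xx}g=\partial_{\pi\pi}g=0$ and $\partial_{x\pi}g=(\mu_1-\mu_0)/[(r-\mu_0)(r-\mu_1)]$ --- yields $(\mathcal{L}-r)g(x,\pi) = -(x-rI)$. Substituting this in the displayed identity gives
\begin{align*}
e^{-rt}V(X^{x,\pi}_t,\Pi^\pi_t) = V(x,\pi) - \int_0^t e^{-rs}(X^{x,\pi}_s-rI)\mathbb{I}_{\{X^{x,\pi}_s\ge b(\Pi^\pi_s)\}}\,\de s + M_t.
\end{align*}
Localizing by $\rho_n:=\inf\{s\ge 0: X^{x,\pi}_s \notin (1/n,n)\}\wedge n$ to tame $M$, taking expectations, and then sending $n\to\infty$ by dominated convergence removes the local martingale term and yields the unlocalized identity.

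Finally I would send $t\to\infty$. By the monotonicity of $V$ in its second argument (Lemma~\ref{lem:ValueFunctionIncreasing}) together with $V(\cdot,1)=g(\cdot,1)$, we obtain the pointwise bound $0\le V(X^{x,\pi}_t,\Pi^\pi_t) \le X^{x,\pi}_t/(r-\mu_1)$, so the decomposition~\eqref{eq:XDecomposition} gives
\begin{align*}
\mathbb{E}\bigl[e^{-rt}V(X^{x,\pi}_t,\Pi^\pi_t)\bigr] \le \frac{1}{r-\mu_1}\bigl[(1-\pi)\,x\,e^{-(r-\mu_0)t} + \pi\,x\,e^{-(r-\mu_1)t}\bigr] \to 0 \quad \text{as } t\to\infty,
\end{align*}
since $r>\max\{\mu_0,\mu_1\}$. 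The same exponential bound dominates the integrand $e^{-rs}|X^{x,\pi}_s-rI|\mathbb{I}_{\{X^{x,\pi}_s\ge b(\Pi^\pi_s)\}}$ uniformly in $s\ge 0$, so dominated convergence on the integral side produces the claimed representation.

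The step I expect to be the main obstacle is the rigorous justification of the change-of-variables formula at the free boundary $\partial D$: because $V$ is only $C^1$ across $\partial D$, the classical It\^o formula does not apply directly and one must invoke Peskir's local time--space formula, verifying that the associated boundary local-time term vanishes. This is guaranteed by the smooth-fit identity of Lemma~\ref{lem:ValueFunctionC^1} together with the continuity and monotonicity of $b$ from Theorem~\ref{thm:PropertiesBoundaryFunction}; once this is in place, the remaining analytic work is routine bookkeeping under the standing assumption $r>\max\{\mu_0,\mu_1\}$.
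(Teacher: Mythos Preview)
Your proposal is correct in outline and reaches the same conclusion, but it differs from the paper's proof in the technical device used to justify It\^o's formula across $\partial D$. The paper does \emph{not} invoke Peskir's local-time-on-curves formula; instead it appeals to \cite[Theorem~D.1]{Oksendal2003}, producing a sequence of genuinely $C^2$ approximants $u_k$ of $V$, applies Dynkin's formula to each $u_k$ on an exhausting sequence of compact sets $K_n$, and then passes to the limit first in $k$ (using that the boundary $\partial D$ is Lebesgue-null for the process) and then in $n$. Your route via the change-of-variables formula with smooth fit is equally standard in the optimal stopping literature and arguably more direct once Lemma~\ref{lem:ValueFunctionC^1} is in hand; the mollification approach in the paper trades the local-time machinery for an elementary approximation argument, at the cost of having to verify the hypotheses of \cite[Theorem~D.1]{Oksendal2003} (Lipschitz boundary, local boundedness of second derivatives near $\partial D$). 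Both approaches rely on the same computation $(\mathcal{L}-r)g=rI-x$ and the same dominated-convergence bookkeeping.

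One small slip: you write ``together with $V(\cdot,1)=g(\cdot,1)$'' to obtain $V(y,\pi)\le y/(r-\mu_1)$, but $V(y,1)=g(y,1)$ only holds for $y\ge x_1^*$. The bound $V(y,1)\le y/(r-\mu_1)$ is nevertheless true for all $y>0$ (e.g.\ via the supermartingale property of $e^{-rt}X^{y,1}_t$, or as the paper does by shifting to $V(y+x_1^*,1)=g(y+x_1^*,1)$), so the conclusion survives; just tighten the justification there.
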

	\begin{proof}	
		By Lemma \ref{lem:BoundaryFunctionLocallyLipschitz}, the boundary $\partial D$ is a Lipschitz surface.
		By Lemma~\ref{lem:ValueFunctionC^1}, the value function is $C^1((0,\infty) \times (0,1))$. 
		For $\pi \in \{0,1\}$, we moreover get from \cite[Sec. 6.1]{DixitPindyck1994} that $V(x,\pi) \rightarrow 0$ when $x \to 0$. 
		Together with Corollary~\ref{cor:UpperBoundValueFct}, we then obtain for any $\pi \in [0,1]$ that
		\begin{displaymath}
			0 \le \lim_{x \to 0} V(x,\pi) \le \lim_{x \to 0} ((1-\pi) V(x,0) + \pi V(x,1)) = 0,
		\end{displaymath} 
		which implies that $V$ is $C([0,\infty) \times [0,1])$. 
		Since $V$ solves \eqref{eq:FreeBoundaryProblem}, we finally have that the second derivatives of $V$ are locally bounded near $\partial D$.  
		Hence, we are in the setting of \cite[Theorem D.1]{Oksendal2003} and obtain that there is a sequence $\left(u_k\right)_{k \in \mathbb{N}}$ of $C^2((0,\infty) \times (0,1)) \cap C([0,\infty) \times [0,1])$-functions approximating $V$ such that $\left(u_k\right)_{k \in \mathbb{N}}$ converges uniformly to $V$ on compact subsets of $[0,\infty) \times [0,1]$, $((\mathcal{L}-r)u_k)_{k \in \mathbb{N}}$ converges uniformly to $(\mathcal{L}-r)V$ on compact subsets of $(0,\infty) \times (0,1) \setminus \partial D$ and $((\mathcal{L}-r)u_k)_{k \in \mathbb{N}}$ is locally bounded on $(0,\infty) \times (0,1)$. 
		Now, we take an increasing sequence of compact subsets $\left(K_n\right)_{n \in \mathbb{N}}$ of $(0,\infty) \times (0,1)$ such that, for all $(x,\pi) \in K_n$, we have $\gamma_n^{x,\pi} := \inf \{t \ge 0: (X^{x,\pi}_t, \Pi^\pi_t) \notin K_n\} \to \infty$ a.s.\ for $n \to \infty$. 
		For all $k,n \in \mathbb{N}$, we have that $\gamma_n^{x,\pi}$ has finite expectation and thus Dynkin's formula (see e.g. \cite[Theorem 7.4.1]{Oksendal2003}) yields
		\begin{equation}
			\label{eq:DynkinFormulaAppliedOnu_k}
			u_k (x,\pi) = \mathbb{E}\left[ e^{-r\gamma^{x,\pi}_n} u_k(X^{x,\pi}_{\gamma^{x,\pi}_n}, \Pi^\pi_{\gamma^{x,\pi}_n}) \right] - \mathbb{E}\left[ \int_0^{\gamma^{x,\pi}_n} e^{-rt} \left( \mathcal{L}-r \right) u_k(X^{x,\pi}_t,\Pi^\pi_t) \, dt \right].
		\end{equation} 
		For all $(x, \pi) \in (0,\infty) \times (0,1)$ we have $(\mathcal{L}-r)g(x,\pi)=rI - x$, hence, we obtain
		\begin{equation}
			\label{eq:GeneratorValueFunction}
			(\mathcal{L}-r)V(x,\pi) = \mathbb{I}_{D} (x,\pi) (\mathcal{L}-r)g(x,\pi)  = \mathbb{I}_D(x,\pi) (rI -x),
		\end{equation}
		where we defined $(\mathcal{L}-r)V(x,\pi) := (\mathcal{L}-r)g(x,\pi)$ on $(0,\infty) \times (0,1) \cap \partial D$. 	
		Since every $(x,\pi) \in D$ satisfies $x \ge b(1) \ge rI$ by Lemma~\ref{lem:beta_0_beta_1}, we have		
		$\left| \left( r-\mathcal{L} \right) V(x,\pi) \right| \le x$ for all $(x, \pi) \in (0,\infty) \times (0,1)$. This implies
		\begin{equation}
			\label{eq:IntegrabilityCondition}
			\mathbb{E}\left[ \int_0^\infty e^{-rt} \left| \left( r-\mathcal{L} \right) V(X^{x,\pi}_t,\Pi^\pi_t) \right| \, dt \right]
			\le \mathbb{E}\left[ \int_0^\infty e^{-rt} X^{x,\pi}_t \, dt \right] = \mathbb{E}\left[  \frac{x}{r-\mu_{\theta^\pi}}  \right] \le \frac{x}{r-\mu_1} < \infty.
		\end{equation}
		For fixed $n \in \mathbb{N}$, we remember that $K_n$ is compact, that $((\mathcal{L}-r)u_k)_{k \in \mathbb{N}}$ converges uniformly to $(\mathcal{L}-r)V$ on compact subsets of $(0,\infty) \times (0,1) \setminus \partial D$ and that $((\mathcal{L}-r)u_k)_{k \in \mathbb{N}}$ is locally bounded on $\partial D$.
		Moreover, the integral of any bounded function restricted to $K_n \cap \partial D$ is zero, since $\mathbb{E} \left[ \int_0^\infty \mathbb{I}_{\{ (X^{x,\pi}_t,\Pi^\pi_t) \in \partial D \}} \, dt \right] = 0$, which follows since $b$ is a continuous decreasing function and $X^{x,\pi}_t$ has a continuous density. 
		Hence, by application of dominated convergence for $k \to \infty$ in \eqref{eq:DynkinFormulaAppliedOnu_k}, we obtain
		\begin{displaymath}
			V(x,\pi) = \mathbb{E}\left[ e^{-r\gamma^{x,\pi}_n} V(X^{x,\pi}_{\gamma^{x,\pi}_n}, \Pi^\pi_{\gamma^{x,\pi}_n}) \right] - \mathbb{E}\left[ \int_0^{\gamma^{x,\pi}_n} e^{-rt} \left( \mathcal{L}-r \right) V(X^{x,\pi}_t,\Pi^\pi_t)  \, dt \right].
		\end{displaymath}
		Since $V$ is increasing in both arguments by Lemma~\ref{lem:ValueFunctionIncreasing} and $\mu_1 <r$, we obtain for all $(x,\pi) \in (0,\infty) \times [0,1]$ that
		\begin{align*}
			\lim_{t \to \infty} \mathbb{E} \left[ e^{-rt} V(X^{x,\pi}_t, \Pi^{\pi}_t) \right] 
			&\le \lim_{t \to \infty} e^{-rt} \mathbb{E} \left[ V(X^{x,\pi}_t + x_1^\ast, 1) \right]
			= \lim_{t \to \infty} e^{-rt} \mathbb{E} \left[ g(X^{x,\pi}_t + x_1^\ast, 1) 	\right] \\
			&\le \tfrac{1}{r- \mu_1} \lim_{t \to \infty} e^{-rt}  (x_1^* + \mathbb{E} 	\left[X^{x,\pi}_t\right])
			\le \tfrac{1}{r- \mu_1} \lim_{t \to \infty} \left[ x_1^\ast e^{-rt} + x e^{(\mu_1-r - \frac{1}{2} \sigma^2)t} \right]\\
			&= 0.
		\end{align*} 
		Again, from dominated convergence for $n \to \infty$, we get
		\begin{displaymath}
			V(x,\pi) = \mathbb{E}\left[ \int_0^\infty e^{-rt} \left( r - \mathcal{L} \right) V(X^{x,\pi}_t,\Pi^\pi_t) \, dt \right].
		\end{displaymath} 
		Eventually, the claim follows with \eqref{eq:GeneratorValueFunction}.
	\end{proof}
	
	\begin{proof}[Proof of Theorem~\ref{thm:NonlinerIntegralEquation}]
		As in \eqref{eq:Derivation_ValueFunctionWithoutIntegral}, we obtain for every  $(x,\pi) \in (0,\infty) \times [0,1]$ that
		\begin{displaymath}
			g(x,\pi) =\mathbb{E}\left[ \int_0^\infty e^{-rt} ( X^{x,\pi}_t - rI ) \, dt \right].
		\end{displaymath}
		Then, by Lemma~\ref{lem:ValueFunctionRepresentation} for $(b(\pi),\pi)$, we get
		\begin{displaymath}
			0 = V(b(\pi),\pi) - g(b(\pi),\pi) = \mathbb{E} \left[ \int_0^\infty e^{-rt} (X^{b(\pi),\pi}_t - rI) \mathbb{I}_{\{X^{b(\pi),\pi}_t \le b(\Pi^\pi_t)\}} \, dt \right],
		\end{displaymath} 
		which shows that $b$ solves \eqref{eq:NonlinearIntegralEquation}. 
		It remains to prove the uniqueness:
		By \eqref{eq:IntegrabilityCondition}, we obtain using Fubini's theorem that
		\begin{displaymath}
			V(x,\pi) =  \int_0^\infty e^{-rt} \mathbb{E}\left[\left( r - \mathcal{L} \right) g(X^{x,\pi}_t,\Pi^\pi_t) \mathbb{I}_D(X^{x,\pi}_t,\Pi^\pi_t) \right] \, dt,
		\end{displaymath}
		which means that we are in the situation of \cite[Sec. 2.2]{ChristensenCrocceMordeckiSalminen2019}. 
		Now, for all $a \in \mathcal{M}$ and $(x,\pi) \in (0,\infty) \times (0,1)$, let us define the stopping times
		\begin{displaymath}
			\gamma^{x,\pi}_{_{^{<a}}} := \inf\{t \ge 0| X^{x,\pi}_t < a(\Pi^\pi_t) \} \quad \text{and} \quad \gamma^{x,\pi}_{_{^{\ge a}}} := \inf\{t \ge 0 | X^{x,\pi}_t \ge a(\Pi^\pi_t)\}.
		\end{displaymath}	
		The claim that $b$ is the unique solution of \eqref{eq:NonlinearIntegralEquation} in $\mathcal{M}$ follows from \cite[Theorem 2.3]{ChristensenCrocceMordeckiSalminen2019}, if we prove that for all $a, \hat{a} \in \mathcal{M}$ the following implications hold true:
		\begin{itemize}
			\item[(i)] If for all $\pi \in (0,1)$ and $x \ge a(\pi) \vee \hat{a}(\pi)$ we have $\mathbb{P}\left( \lambda( \{ t \le \gamma^{x,\pi}_{_{^{<a}}} | X^{x,\pi}_t < \hat{a}(\Pi^\pi_t)\} ) = 0 \right) = 1$, then $ \hat{a}(\pi) \le a(\pi)$ for all $\pi \in (0,1)$.
			
			\item[(ii)] If for all $\pi \in (0,1)$ and $x \in (0,a(\pi))$ we have $\mathbb{P}\left( \lambda( \{ t \le \gamma^{x,\pi}_{_{^{\ge a}}} | X^{x,\pi}_t \ge \hat{a}(\Pi^\pi_t)\} ) = 0 \right) = 1$, then $\hat{a}(\pi) \ge a(\pi)$ for all $\pi \in (0,1)$.
		\end{itemize}
		To prove (i), we assume that there are $a,\hat{a} \in \mathcal{M}$ and $\hat{\pi} \in (0,1)$ such that $\hat{a}(\hat{\pi}) > a(\hat{\pi})$. 
		We define $O := \{(x,\pi) \in (0,\infty) \times (0,1) | \hat{a}(\pi) > x > a(\pi) \}$ and note that it is open since $a$ and $\hat{a}$ are continuous. 
		For each $\omega$ such that there is an $s < \gamma^{\hat{a}(\hat{\pi}), \hat{\pi}}_{_{^{<a}}}$ with $(X_s^{\hat{a}(\hat{\pi}), \hat{\pi}}, \Pi_s^{\hat{\pi}}) \in O$, we have 
		\begin{displaymath}
			\{t\le \gamma^{\hat{a}(\hat{\pi}), \hat{\pi}}_{_{^{<a}}} | X_t^{\hat{a}(\hat{\pi}), \hat{\pi}} < \hat{a}(\Pi_t^{\hat{\pi}}) \} \supseteq \{t \ge 0 | ( X_t^{\hat{a}(\hat{\pi}), \hat{\pi}},\Pi_t^{\hat{\pi}})\in O\} \supseteq [s, s+\tau_{O^c}^{X_s^{\hat{a}(\hat{\pi}), \hat{\pi}}, \Pi_s^{\hat{\pi}}}),
		\end{displaymath} 
		where $\tau^{x,\pi}_{O^c} := \inf\{ t \ge 0| (X^{x,\pi}_t,\Pi^\pi_t) \in O^c \}$ for all $(x,\pi) \in (0,\infty) \times [0,1]$. 
		Since $O$ is open, we have $\lambda([s, s+\tau_{O^c}^{X_s^{\hat{a}(\hat{\pi}), \hat{\pi}}, \Pi_s^{\hat{\pi}}}))>0$. 
		Hence, it suffices to establish that
		\begin{displaymath}
			\mathbb{P}\left( \{ \exists t < \gamma^{\hat{a}(\hat{\pi}), \hat{\pi}}_{_{^{<a}}} | ( X^{\hat{a}(\hat{\pi}), \hat{\pi}}_t, \Pi^{\hat{\pi}}_t ) \in O \} \right) > 0. 
		\end{displaymath}
		For this, we note that $\max\{\pi \in [0,1]: a(\pi) = \hat{a}(\hat{\pi})\} < \hat{\pi}$ since $a, \hat{a} \in \mathcal{M}$ are decreasing. 
		Hence, $\varepsilon:= \frac{1}{2}(\hat{\pi}-\max\{\pi \in [0,1]: a(\pi) = \hat{a}(\hat{\pi})\}) > 0$. 
		We set $\delta>0$ such that $\hat{a}(\hat{\pi}) - \delta > a(\hat{\pi}-\varepsilon)$ and
		\begin{displaymath}
			\frac{\hat{a}(\hat{\pi}) - \delta }{\hat{a}(\hat{\pi})} \ge \left( \frac{ (\hat{\pi}-\varepsilon) (1-\hat{\pi})}{(1-\hat{\pi}+\varepsilon) \hat{\pi} } \right)^{\frac{\sigma^2}{\mu_1-\mu_0}}.
		\end{displaymath}
		Further, we define $\tau^{\hat{a}(\hat{\pi}),\hat{\pi}}_{\hat{a}(\hat{\pi})-\delta} := \inf\{t \ge 0 | X^{\hat{a}(\hat{\pi}),\hat{\pi}}_t \le \hat{a}(\hat{\pi})-\delta\}$ and
		\begin{align*}
			t^* := \begin{cases}
				\frac{2}{ ( \sigma^2 - \mu_1 - \mu_0) } \ln \left( \frac{\hat{a}(\hat{\pi})}{\hat{a}(\hat{\pi})-\delta} \left( \frac{(\hat{\pi}-\varepsilon)(1-\hat{\pi})}{(1-\hat{\pi}+\varepsilon)\hat{\pi}} \right)^\frac{\sigma^2}{\mu_1-\mu_0} \right), & \text{if } \sigma^2-\mu_1-\mu_0 < 0 \\
				\frac{2}{ ( \sigma^2 - \mu_1 - \mu_0) }  \ln \left( \frac{\hat{a}(\hat{\pi})}{\hat{a}(\hat{\pi})-\delta} \right), & \text{if } \sigma^2-\mu_1-\mu_0 > 0 \\
				\infty, & \text{if } \sigma^2-\mu_1-\mu_0 = 0.
			\end{cases}
		\end{align*}
		By \cite[p.622, Eq. 2.0.2]{BorodinSalminen2002}, we have that $\mathbb{P}(\tau^{\hat{a}(\hat{\pi}),\hat{\pi}}_{\hat{a}(\hat{\pi})-\delta} \le t^*) > 0$. 
		Hence, if we prove that
		\begin{equation}
			\label{eq:ImplicationForUniqueness}
			\tau^{\hat{a}(\hat{\pi}),\hat{\pi}}_{\hat{a}(\hat{\pi})-\delta} \le t^* 
			\quad \Rightarrow \quad  \tau^{\hat{a}(\hat{\pi}),\hat{\pi}}_{\hat{a}(\hat{\pi})-\delta} < \gamma^{\hat{a}(\hat{\pi}),\hat{\pi}}_{_{^{<a}}} 
			\text{ and } (X^{\hat{a}(\hat{\pi}),\hat{\pi}}_{\tau^{\hat{a}(\hat{\pi}),\hat{\pi}}_{\hat{a}(\hat{\pi})-\delta}}, \Pi^{\hat{\pi}}_{\tau^{\hat{a}(\hat{\pi}),\hat{\pi}}_{\hat{a}(\hat{\pi})-\delta}} ) \in O
		\end{equation}
		the claim follows. 
		So, we assume $\tau^{\hat{a}(\hat{\pi}),\hat{\pi}}_{\hat{a}(\hat{\pi})-\delta} \le t^*$. 
		For $t \in [0,\infty)$,  Proposition~\ref{prop:PiExplicitSolution} implies that
		\begin{displaymath}
			 \Pi^{\hat{\pi}}_t \ge \hat{\pi}-\varepsilon 
			 \quad \Leftrightarrow \quad
			 \frac{X^{\hat{a}(\hat{\pi}), \hat{\pi}}_t}{\hat{a}(\hat{\pi})} e^{\frac{1}{2}(\sigma^2-\mu_1-\mu_0)t} \ge \left( \frac{ (\hat{\pi}-\varepsilon) (1-\hat{\pi})}{(1-\hat{\pi}+\varepsilon) \hat{\pi} } \right)^{\frac{\sigma^2}{\mu_1-\mu_0}}.
		\end{displaymath}
		For all $t \le \tau^{\hat{a}(\hat{\pi}),\hat{\pi}}_{\hat{a}(\hat{\pi})-\delta}$, we have
		 $X^{\hat{a}(\hat{\pi}), \hat{\pi}}_t \ge \hat{a}(\hat{\pi}) - \delta$, which implies by choice of $t^\ast$ that $\Pi^{\hat{\pi}}_t \ge \hat{\pi}-\varepsilon$.
		Since $a$ is decreasing, we have
		\begin{displaymath}
			a(\Pi^{\hat{\pi}}_t) \le a(\hat{\pi} - \varepsilon) 
			< \hat{a}(\hat{\pi}) - \delta 
			\le X^{\hat{a}(\hat{\pi}),\hat{\pi}}_t \quad \text{for all } t \le \tau^{\hat{a}(\hat{\pi}),\hat{\pi}}_{\hat{a}(\hat{\pi})-\delta},
		\end{displaymath}
		hence $\tau^{\hat{a}(\hat{\pi}),\hat{\pi}}_{\hat{a}(\hat{\pi})-\delta} < \gamma^{\hat{a}(\hat{\pi}),\hat{\pi}}_{_{^{<a}}}$. 
		Moreover, for $t \in [0,\infty)$, Proposition~\ref{prop:PiExplicitSolution} implies that
		\begin{displaymath}
			\Pi_t^{\hat{\pi}} \le \hat{\pi} 
			\quad \Leftrightarrow \quad 
			\frac{X^{\hat{a}(\hat{\pi}), \hat{\pi}}_t}{\hat{a}(\hat{\pi})} e^{\frac{1}{2}(\sigma^2-\mu_1-\mu_0)t} \le 1.
		\end{displaymath} 
		Hence, by choice of $t^\ast$ we have  $\Pi^{\hat{\pi}}_{\tau^{\hat{a}(\hat{\pi}),\hat{\pi}}_{\hat{a}(\hat{\pi})-\delta}} \le \hat{\pi}$. 
		All in all, it holds $\hat{\pi} - \varepsilon \le \Pi^{\hat{\pi}}_{\tau^{\hat{a}(\hat{\pi}),\hat{\pi}}_{\hat{a}(\hat{\pi})-\delta}} \le \hat{\pi}$. 
		Moreover, we have $X^{\hat{a}(\hat{\pi}),\hat{\pi}}_{\tau^{\hat{a}(\hat{\pi}),\hat{\pi}}_{\hat{a}(\hat{\pi})-\delta}} = \hat{a}(\hat{\pi}) - \delta$.
		Since $a$ is decreasing and by definition of $\delta$, we obtain
		\begin{displaymath}
			a(\Pi^{\hat{\pi}}_{\tau^{\hat{a}(\hat{\pi}),\hat{\pi}}_{\hat{a}(\hat{\pi})-\delta}}) \le a(\hat{\pi}-\varepsilon) < \hat{a}(\hat{\pi}) - \delta < \hat{a}(\hat{\pi}) \le \hat{a}(\Pi^{\hat{\pi}}_{\tau^{\hat{a}(\hat{\pi}),\hat{\pi}}_{\hat{a}(\hat{\pi})-\delta}}).
		\end{displaymath} 
		Hence, we proved $(X^{\hat{a}(\hat{\pi}),\hat{\pi}}_{\tau^{\hat{a}(\hat{\pi}),\hat{\pi}}_{\hat{a}(\hat{\pi})-\delta}}, \Pi^{\hat{\pi}}_{\tau^{\hat{a}(\hat{\pi}),\hat{\pi}}_{\hat{a}(\hat{\pi})-\delta}} ) \in O$.
		
		To prove (ii), we assume that there are $a,\hat{a} \in \mathcal{M}$ and $(\hat{x},\hat{\pi}) \in (0,\infty) \times (0,1)$ with $a(\hat{\pi}) > \hat{x} > \hat{a}(\hat{\pi})$. 
		We define $O := \{(x,\pi) \in (0,\infty) \times (0,1) | a(\pi) > x > \hat{a}(\pi) \}$ and note that it is open since $a$ and $\hat{a}$ are continuous. 
		Then we have 
		\begin{displaymath}
			\{t\le \gamma^{\hat{x}, \hat{\pi}}_{_{^{\ge a}}} | X_t^{\hat{x}, \hat{\pi}} \ge \hat{a}(\Pi_t^{\hat{\pi}}) \} \supseteq [0, \tau_{O^c}^{\hat{x},\hat{\pi}}),
		\end{displaymath} 
		where $\tau^{x,\pi}_{O^c} := \inf\{ t \ge 0| (X^{x,\pi}_t,\Pi^\pi_t) \in O^c \}$ for all $(x,\pi) \in (0,\infty) \times [0,1]$. 
		Since $O$ is open, we have $\lambda( [0, \tau_{O^c}^{\hat{x},\hat{\pi}}) )>0$. Hence, 
		\[		
			\mathbb{P}\left( \lambda( \{ t \le \gamma^{\hat{x},\hat{\pi}}_{_{^{\ge a}}} | X^{\hat{x},\hat{\pi}}_t \ge \hat{a}(\Pi^{\hat{\pi}}_t)\} ) = 0 \right) = 0. \qedhere
		\]
	\end{proof}	
	
%%%%%%%%%%%%%%%%%%%%%%%%%%%%%%%%%%%%%%%%%%%%%%%%%%%%%%%%%%%%%%%%%%%%%%%%%%%%%%%%%%%%%
\section{A Numerical Scheme for the Boundary Function}
	\label{sec:Numerics}
	
	Relying on the integral equation~\eqref{eq:NonlinearIntegralEquation}, we derive a numerically tractable fixed-point iteration scheme to compute the boundary function.
	We apply this scheme to compute and analyze the boundary function. 
	Moreover, we investigate the dependency of the value of information on the initial profit level and initial belief. 
	
	We established in Section~\ref{sec:Integral_boundary} that the boundary function is the unique solution of the integral equation~\eqref{eq:NonlinearIntegralEquation}.
	Using $V(b(\pi),\pi) =g(b(\pi),\pi)$, this implies that 
	\begin{displaymath}
		V(b(\pi),\pi) = g(b(\pi),\pi) - \mathbb{E} \left[ \int_0^\infty e^{-rt} \left( X^{b(\pi),\pi}_t - rI \right) \mathbb{I}_{\{ X^{b(\pi),\pi}_t \le b(\Pi^\pi_t)\}} \, dt \right].
	\end{displaymath} 
	Utilizing an independent exponential distributed random variable $\xi$ with parameter $r$, we rewrite the integral equation as
		\begin{displaymath}
		V(b(\pi),\pi) = g(b(\pi),\pi) - \mathbb{E} \left[ \tfrac{1}{r} \left( X^{b(\pi),\pi}_\xi - rI \right) \mathbb{I}_{\{ X^{b(\pi),\pi}_\xi \le b(\Pi^\pi_\xi)\}} \right].
	\end{displaymath}
	Rescaling this equation with $1/\left( \tfrac{1-\pi}{r-\mu_0} + \tfrac{\pi}{r-\mu_1}\right)$ implies that $b$ is a fixed point of the operator $\Psi$ that maps any function $b$ to the function $\Psi b$ with 
	\begin{displaymath}
		\Psi b (\pi) = b(\pi) - \frac{1}{\frac{1-\pi}{r-\mu_0}+\frac{\pi}{r-\mu_1}} \mathbb{E} \left[ \tfrac{1}{r} \left( X^{b(\pi),\pi}_\xi - rI \right) \mathbb{I}_{\{ X^{b(\pi),\pi}_\xi \le b(\Pi^\pi_\xi)\}} \right], \quad \pi \in [0,1].
	\end{displaymath} 
	Hence, as in \cite{ ChristensenSalminen2018, DammannFerrari2022,DetempleKitapbayev2020}, it is natural to consider the fixed point iteration
	\begin{displaymath}
		b^{(n)} = \Psi(b^{(n-1)})
	\end{displaymath} 
	for a suitable chosen initial boundary $b^{(0)} \in \mathcal{M}$. 
	Here, the lower bound $\underline{b}$ introduced in Theorem~\ref{thm:PropertiesBoundaryFunction} is a natural candidate.
	Using this lower bound and approximating the expectation by using the Monte Carlo method, also in our setting this scheme turns out to converge fast to an approximate solution of \eqref{eq:NonlinearIntegralEquation}.
	
	Utilizing this iteration scheme, we compute the boundary functions for several parameter constellations. 
	Figure~\ref{fig:BoundaryFunction} depicts the boundary function $b$, the lower bound $\underline{b}$ and stopping region for four different parameter constellations. 
	We observe that the boundary functions all satisfy the properties described in Theorem~\ref{thm:PropertiesBoundaryFunction}, i.e., they are decreasing continuous functions with values in $[x_1^*,x_0^*]$  that lie above the lower bound.
	For some cases, we observe that the lower bound and the boundary are close to each other, for other cases there is a significant gap. 
	This behavior cannot be explained by varying one of the parameters or the signal-to-noise ratio and is left for future research. 
	
	Moreover, Figure~\ref{fig:BoundaryFunction} shows that the greater the signal-to-noise ratio, the more the boundary function and the lower bound are pushed to the upper right corner. 
	This behavior is robust and has been observed in all numerical experiments carried out by the authors. 
	Intuitively, this is explained as follows: 
	If the signal-to-noise ratio is large, then the belief process converges faster to $0$ or $1$, respectively, which means that the agent learns faster about the true state of the unknown random variable $\theta$. 
	Hence, waiting has larger benefits in these situations, which gives a larger continuation region.
		
	\begin{figure}[ht]
		\begin{minipage}[t]{0.495\textwidth}
			\includegraphics[width=0.9\textwidth]{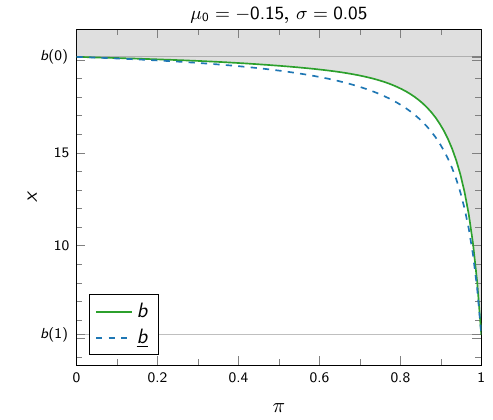}
		\end{minipage}
		\begin{minipage}[t]{0.495\textwidth}
			\includegraphics[width=0.9\textwidth]{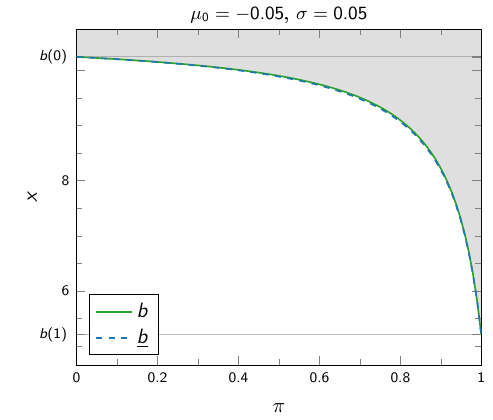}
		\end{minipage}
		\begin{minipage}[t]{0.495\textwidth}
			\includegraphics[width=0.9\textwidth]{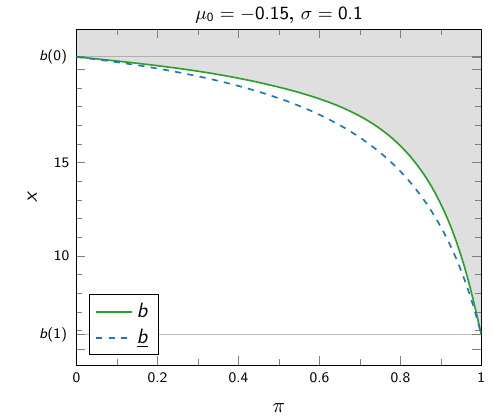}
		\end{minipage}
		\begin{minipage}[t]{0.495\textwidth}
			\includegraphics[width=0.9\textwidth]{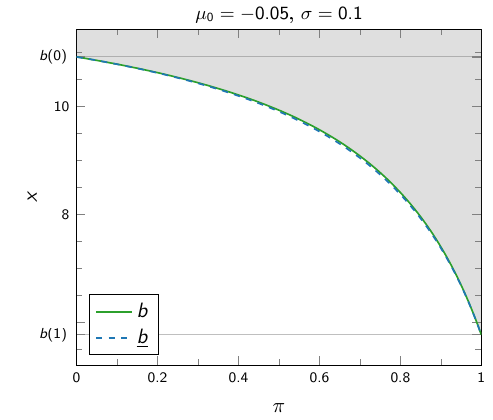}
		\end{minipage}
		\caption{Boundary function, lower bound and stopping set (gray) for different parameters; $r=0.05$, $\mu_1=0.03$ and $I=100$ fixed}
		\label{fig:BoundaryFunction}
	\end{figure}
	
	As usual, we are also interested in the value of information, i.e., the additional value generated by full knowledge of the model, in our case the knowledge about $\theta$. 
	In our setting, the value of information is given by
	\begin{displaymath}
		\Delta(x,\pi) := \overline{V}(x,\pi) - V(x,\pi),
	\end{displaymath} 
	where $\overline{V}(x,\pi) := (1-\pi) V(x,0) + \pi V(x,1)$ is the value of the problem with full information, that is the agent knows at time $0-$ the true value of $\theta$ and hence uses the optimal strategy for the corresponding problem with full information. By Corollary~\ref{cor:UpperBoundValueFct}, the value of information is always non-negative.
	It is immediate from our results that $V(x,\pi) = \overline{V}(x,\pi)$ for $\pi \in [0,1]$ and $x \ge b(0)$, which means that when the profit level exceeds $b(0)$, then there is no value of information. 
	The reason is that then the agents stop immediately in either case.
	Hence, it suffices to consider profit levels ranging from $0$ to $b(0)$ in our numerical experiments. 
	In all our numerical experiments we observe that the value of information is in general within or below the single-digit percentage range compared to the investment costs. 
	Moreover, we observe that a larger signal-to-noise ratio leads to a smaller value of information. 
	This is again intuitive, since a larger signal-to-noise ratio implies faster learning of the true state of $\theta$ and hence, more informed decisions by the uninformed agent.
	
	Figure~\ref{fig:ValueOfInfo} depicts the value of information $\Delta(\cdot,\pi)$ for fixed initial beliefs $\pi$ and for the same parameter constellations as in Figure~\ref{fig:BoundaryFunction}. 	
	We observe that the value of information $\Delta(\cdot,\pi)$ is close to zero near the origin, then it increases until reaching its peak within $(b(1),b(\pi))$, and decreases thereafter until it reaches $0$ at $b(0)$.
	The profit level associated with the peak is decreasing with $\pi$ and the largest peak is observed for a medium $\pi$.
	The reason for this behavior, which is again robust for all our numerical experiments, is that the value of information has two components: 
	One the hand, it might happen that the uniformed agent stops, whereas the informed agent would continue. 
	This happens for $x \in (b(\pi),b(0))$ and $\theta=0$. 
	On the other hand, it might happen that the uniformed agent continues, whereas the informed agent would stop. 
	This happens for $x \in (b(1),b(\pi))$ and $\theta=1$. 
	Since these losses are larger, the larger is the distance of $x$ and $b(0)$ or $b(1)$, respectively, and the larger is the probability for these cases to happen (which is $1-\pi$ or $\pi$, respectively), this explains the form of the value of information. \\

	\begin{figure}[ht]
		\begin{minipage}[t]{0.495\textwidth}
			\includegraphics[width=0.99\textwidth]{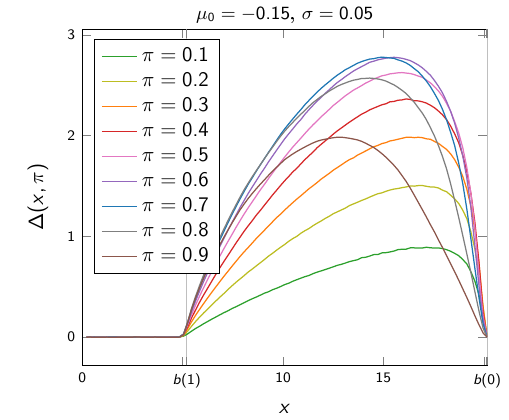}
		\end{minipage}
		\begin{minipage}[t]{0.495\textwidth}
			\includegraphics[width=0.99\textwidth]{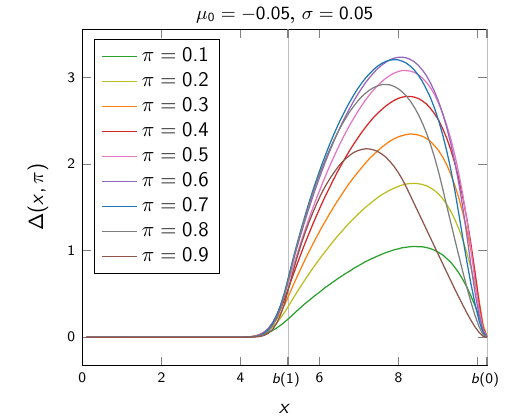}
		\end{minipage}
		\begin{minipage}[t]{0.495\textwidth}
			\includegraphics[width=0.99\textwidth]{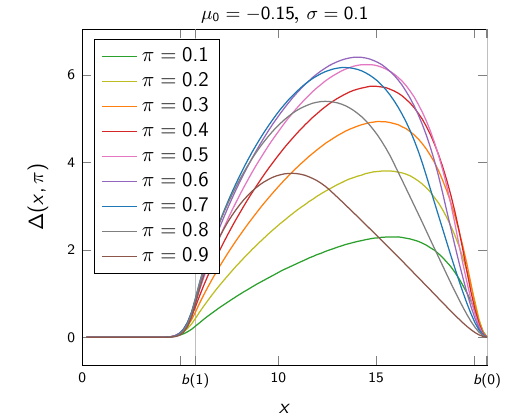}
		\end{minipage}
		\begin{minipage}[t]{0.495\textwidth}
			\includegraphics[width=0.99\textwidth]{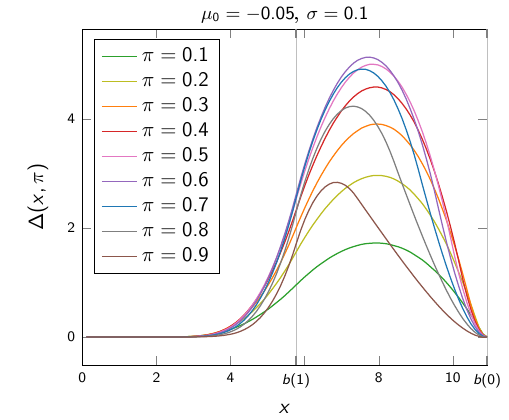}
		\end{minipage}
		\caption{Value of information $\Delta$ for different parameters; $r=0.05$, $\mu_1=0.03$ and $I=100$ fixed}
		\label{fig:ValueOfInfo}
	\end{figure}	
	
	\textbf{Acknowledgements}
	The authors thank the DFG for their support within RTG 2126 "Algorithmic Optimization".
	
	\textbf{Declarations of interest:} None.

	\bibliographystyle{plain}
	\bibliography{literature_investment}

\begin{thebibliography}{10}

\bibitem{BorodinSalminen2002}
Andrei~N. Borodin and Paavo Salminen.
\newblock {\em {Handbook of Brownian Motion - Facts and Formulae}}.
\newblock Birkhäuser, 2nd edition, 2002.

\bibitem{ChristensenCrocceMordeckiSalminen2019}
S{\"o}ren Christensen, Fabi{\'a}n Crocce, Ernesto Mordecki, and Paavo Salminen.
\newblock {On optimal stopping of multidimensional diffusions}.
\newblock {\em Stochastic Processes and their Applications}, 129(7):2561--2581,
  2019.

\bibitem{ChristensenSalminen2018}
S{\"o}ren Christensen and Paavo Salminen.
\newblock {Multidimensional investment problem}.
\newblock {\em Mathematics and Financial Economics}, 12:75--95, 2018.

\bibitem{DammannFerrari2022}
F.~Dammann and G.~Ferrari.
\newblock {On an irreversible investment problem with two-factor uncertainty}.
\newblock {\em Quantitative Finance}, 22(5):907--921, 2022.

\bibitem{DammannFerrari2023}
Felix Dammann and Giorgio Ferrari.
\newblock {Optimal execution with multiplicative price impact and incomplete
  information on the return}.
\newblock {\em Finance and Stochastics}, 27:713--768, 2023.

\bibitem{DeAngelis2019}
Tiziano De~Angelis.
\newblock {Optimal dividends with partial information and stopping of a
  degenerate reflecting diffusion}.
\newblock {\em Finance and Stochastics}, 24:71--123, 2020.

\bibitem{DeAngelisEkströmGlover2021}
Tiziano De~Angelis, Erik Ekstr{\"o}m, and Kristoffer Glover.
\newblock {Dynkin Games with Incomplete and Asymmetric Information}.
\newblock {\em Mathematics of Operations Research}, 47(1):560--586, 2022.

\bibitem{DeAngelisGensbittelVilleneuve2021}
Tiziano De~Angelis, Fabien Gensbittel, and Stephane Villeneuve.
\newblock {A Dynkin Game on Assets with Incomplete Information on the Return}.
\newblock {\em Mathematics of Operations Research}, 46(1):28--60, 2021.

\bibitem{DecampsMariottiVilleneuve2005}
Jean-Paul D{\'e}camps, Thomas Mariotti, and St{\'e}phane Villeneuve.
\newblock {Investment Timing Under Incomplete Information}.
\newblock {\em Mathematics of Operations Research}, 30(2):472--500, 2005.

\bibitem{DetempleKitapbayev2020}
Jerome Detemple and Yerkin Kitapbayev.
\newblock {The value of green energy under regulation uncertainty}.
\newblock {\em Energy Economics}, 89:104807, 2020.

\bibitem{DixitPindyck1994}
Avinash~K. Dixit and Robert~S. Pindyck.
\newblock {\em {Investment under Uncertainty}}.
\newblock Princeton University Press, 1994.

\bibitem{EkströmLu2011}
Erik Ekstr{\"o}m and Bing Lu.
\newblock {Optimal Selling of an Asset under Incomplete Information}.
\newblock {\em International Journal of Stochastic Analysis}, 2011:543590,
  2011.

\bibitem{EkströmVaicenavicius2016}
Erik Ekstr{\"o}m and Juozas Vaicenavicius.
\newblock {Optimal Liquidation of an Asset under Drift Uncertainty}.
\newblock {\em SIAM Journal on Financial Mathematics}, 7(1):357--381, 2016.

\bibitem{EkströmVannestal2019}
Erik Ekstr{\"o}m and Martin Vannest{\aa}l.
\newblock {American options and incomplete information}.
\newblock {\em International Journal of Theoretical and Applied Finance},
  22(6):1950035, 2019.

\bibitem{Gapeev2012}
Pavel~V. Gapeev.
\newblock {Pricing of perpetual American options in a model with partial
  information}.
\newblock {\em International Journal of Theoretical and Applied Finance},
  15(1):1250010, 2012.

\bibitem{KaratzasZhao2001}
I.~Karatzas and X.~Zhao.
\newblock {Bayesian Adaptive Portfolio Optimization}.
\newblock In E.~Jouini, J.~Cvitanic, and Marek Musiela, editors, {\em Handbooks
  in Mathematical Finance: Option Pricing, Interest Rates and Risk Management},
  page 632–669. Cambridge University Press, 2001.

\bibitem{KaratzasShreve1998}
Ionannis Karatzas and Steven~E. Shreve.
\newblock {\em {Brownian Motion and Stochastic Calculus}}.
\newblock Springer, 2nd edition, 1998.

\bibitem{Klein2009}
Manuel Klein.
\newblock {Comment on “Investment Timing Under Incomplete Information”}.
\newblock {\em Mathematics of Operations Research}, 34(1):249--254, 2009.

\bibitem{LiptserShiryaev1977}
R.S. Liptser and A.N. Shiryayev.
\newblock {\em {Statistics of Random Processes I - General Theory}}.
\newblock Springer, 1977.

\bibitem{McDonaldSiegel1986}
Robert McDonald and Daniel Siegel.
\newblock {The Value of Waiting to Invest}.
\newblock {\em The Quarterly Journal of Economics}, 101(4):707--727, 1986.

\bibitem{Oksendal2003}
Bernt {\O}ksendal.
\newblock {\em Stochastic Differential Equations - An Introduction with
  Applications}.
\newblock Springer, 6th edition, 2003.

\bibitem{PeskirShiryaev2006}
Goran Peskir and Albert Shiryaev.
\newblock {\em {Optimal Stopping and Free-Boundary Problems}}.
\newblock Birkhäuser, 2006.

\bibitem{Shiryaev1978}
A.N. Shiryayev.
\newblock {\em {Optimal Stopping Rules}}.
\newblock Springer, 1978.

\bibitem{Wonham1964}
W.M. Wonham.
\newblock {Some Applications of Stochastic Differential Equations to Optimal
  Nonlinear Filtering}.
\newblock {\em Journal of the Society for Industrial and Applied Mathematics
  Series A Control}, 2(3):347--369, 1964.

\end{thebibliography}
	
	\appendix
%%%%%%%%%%%%%%%%%%%%%%%%%%%%%%%%%%%%%%%%%%%%%%%%%%%%%%%%%%%%%%%%%%%%%%%%%%%%%%%%%%%%%
\section{Appendix}
	\label{sec:appendix}
	
	\begin{proof}[Proof of Proposition~\ref{prop:existence_first_entry}]
		We note that $0 \le X_t^{x,\pi} \le X_t^{x,1}$ for all $t \ge 0$~a.s.\ and, since $\mu_0 < \mu_1$, we have $\tfrac{1- \hat{\pi}}{r-\mu_0} + \tfrac{\hat{\pi}}{r-\mu_1} \le \tfrac{1}{r-\mu_1}$ for all $\hat{\pi} \in [0,1]$. 
		Hence,
		\begin{displaymath}
			\mathbb{E} \left[ \sup_{t \geq 0} \left| e^{-r(s+t)} g(X^{x,\pi}_{t},\Pi^\pi_{t}) \right| \right]
			\le \frac{e^{-rs}}{r-\mu_1} \mathbb{E} \left[ \sup_{t \geq 0}  e^{-rt} X^{x,1}_t  \right] + e^{-rs} I.
		\end{displaymath}
		This expectation is finite since for $\nu := \frac{\mu_1-r}{\sigma^2} - \frac{1}{2}$ (note that $2 \nu +1 < 0$) we obtain by \cite[p.622, Eq. 2.0.2(1)]{BorodinSalminen2002} that 
		\begin{align*}
			\mathbb{E} \left[ \sup_{t \geq 0}  e^{-rt} X^{x,1}_t  \right]
			&= \int_0^\infty \mathbb{P} \left(\sup_{t \geq 0} e^{-rt} X^{x,1}_t \ge y\right) \, dy 
			= \int_0^x 1 \, dy + \int_x^\infty \left(\frac{x}{y}\right)^{-2\nu} \, dy \\
			&= x + x^{-2\nu} \frac{1}{2\nu + 1} \left[ y^{2\nu+1} \right]_{y = x}^\infty 
			= x \left(1+ \frac{\sigma^2}{2(r-\mu_1)} \right) < \infty.
		\end{align*}
		Now, for $(s,x,\pi) \in [0, \infty) \times (0,\infty) \times[0,1]$ we consider the auxiliary optimal stopping problem 
		\begin{displaymath}
			\widetilde{V}(s,x,\pi) := \sup_{\tau \in \mathcal{T}^{x,\pi}} \mathbb{E} \left[ \tilde{g} \left(s+\tau, X^{x,\pi}_\tau, \Pi^\pi_\tau \right) \right]
		\end{displaymath} 
		with payoff function $\tilde{g}(t,\hat{x},\hat{\pi}) := e^{-rt} g(\hat{x},\hat{\pi})$. 
		Then, $(s+t, X^{x,\pi}_t,\Pi^\pi_t)_{t \ge 0}$ is an $\mathcal{F}^{X^{x,\pi}}$-adapted Markov process with continuous paths, satisfying $\lim_{t \to 0} \tilde{g}(s+t,X^{x,\pi}_t, \Pi^\pi_t) = \tilde{g}(s,x,\pi)$~a.s.
		Since we established before that $\mathbb{E} \left[ \sup_{t \geq 0} \left| \tilde{g}(s+t, X^{x,\pi}_{t},\Pi^\pi_{t}) \right| \right]$ is finite, we are in the setting of \cite[Ch. 3, Theorem 3(2)]{Shiryaev1978}.
		Hence, $\inf \{t \ge 0: (s+t,X_t^{x,\pi},\Pi^\pi_t) \in \widetilde{D}\}$ with
		\begin{displaymath}
			\widetilde{D} = \left\{ (t,\hat{x},\hat{\pi}) \in [0,\infty) \times (0,\infty) \times [0,1] : \widetilde{V}(t,\hat{x}, \hat{\pi}) = \widetilde{g}(t,\hat{x},\hat{\pi}) \right\}
		\end{displaymath} 
		is an optimal stopping time. 
		To prove the claim, it remains to show that $\widetilde{D} = [0, \infty) \times D$. 
		Let $(\hat{t},\hat{x}, \hat{\pi}) \in \widetilde{D}$. 
		Then we obtain for any $t \ge 0$ that
		\begin{align*}
			\widetilde{V}(t,\hat{x},\hat{\pi}) 
			&= \sup_{\tau \in \mathcal{T}^{\hat{x}, \hat{\pi}}} \mathbb{E} \left[ e^{-r(t+\tau)} g \left( X^{\hat{x},\hat{\pi}}_\tau, \Pi^{\hat{\pi}}_\tau \right) \right] 
			= e^{-r(t-\hat{t})} \sup_{\tau \in \mathcal{T}^{\hat{x},\hat{\pi}}} \mathbb{E} \left[ e^{-r(\hat{t} +\tau)} g \left( X^{\hat{x},\hat{\pi}}_\tau, \Pi^{\hat{\pi}}_\tau \right) \right] \\
			&= e^{-r(t-\hat{t})} \widetilde{V}(\hat{t},\hat{x},\hat{\pi}) 
			= e^{-r(t-\hat{t})} \widetilde{g}(\hat{t},\hat{x},\hat{\pi}) 
			= \widetilde{g}(t,\hat{x},\hat{\pi}),
		\end{align*} 
		which implies $(t, \hat{x}, \hat{\pi}) \in \tilde{D}$.
		Noting that $V(\hat{x},\hat{\pi}) = \widetilde{V}(0,\hat{x}, \hat{\pi})$ and $g(\hat{x},\hat{\pi}) = \widetilde{g}(0,\hat{x}, \hat{\pi})$, we obtain 
		\begin{displaymath}
			\widetilde{D} 
			= \left\{ (t,\hat{x},\hat{\pi}) \in  [0,\infty) \times (0,\infty) \times [0,1] : V(\hat{x}, \hat{\pi}) = g(\hat{x},\hat{\pi}) \right\}
			= [0,\infty) \times D. \qedhere
		\end{displaymath}
	\end{proof}

	\begin{proof}[Proof of Proposition \ref{prop:PiExplicitSolution}]
		Fix $t \ge 0$. For $i \in \{0,1\}$, we set 
		\begin{align*}
			E_i &:= \exp\left( \left(\mu_i - \frac{1}{2} \sigma^2 \right) \int_0^t \frac{1}{\sigma^2} \, dS^\pi_u - \frac{1}{2}\left(\mu_i - \frac{1}{2} \sigma^2 \right)^2 \int_0^t \frac{1}{\sigma^2} \, du \right) \\
			&= \exp \left( \frac{\mu_i-\frac{1}{2}\sigma^2}{\sigma^2} S^\pi_t - \frac{(\mu_i-\frac{1}{2}\sigma^2)^2}{2\sigma^2} t \right)
		\end{align*}
		and obtain by \cite[Eq. (5)]{Wonham1964} that
		\begin{displaymath}
			\Pi^\pi_t = \frac{ \pi E_1 } {(1-\pi) E_0 + \pi E_1} = \frac{\pi E_1 E_0^{-1}}{1-\pi + \pi E_1 E_0^{-1}}.
		\end{displaymath}
		Since
		\begin{displaymath}
			E_1E_0^{-1} 
			= \exp \left( \frac{\mu_1-\mu_0}{\sigma^2} S^\pi_t + \frac{\mu_1-\mu_0}{\sigma^2} \frac{1}{2} \left(\sigma^2-\mu_1-\mu_0\right) t \right) 
			=\left( \frac{X_t^{x,\pi}}{x} \exp \left( \frac{1}{2} (\sigma^2 - \mu_1 - \mu_0) t \right) \right)^{\frac{\mu_1-\mu_0}{\sigma^2}},
		\end{displaymath} 
		we obtain $\Pi_t^\pi = f(t,x, \pi, X_t^{x,\pi})$ as claimed.
	\end{proof}
	
	\begin{lemma}
		\label{lem:beta_0_beta_1}
		It holds that $\beta_0 > \beta_1$ and $x_0^\ast > x_1^\ast \ge rI$.
	\end{lemma}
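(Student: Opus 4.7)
The plan is to derive a clean closed form for $x_i^\ast$ in terms of $\beta_i$ directly from the quadratic equation~\eqref{eq:BetaQuadraticEquation}, from which both claims become essentially immediate.

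First, I would establish $\beta_0>\beta_1$ by a sign-comparison of the two quadratics. Writing $p_i(\beta):=\tfrac12\sigma^2\beta(\beta-1)+\mu_i\beta-r$, the roots of $p_i$ have product $-2r/\sigma^2<0$, so $p_i$ has one negative root and one root $\beta_i>1$ (the latter because $p_i(1)=\mu_i-r<0$ by our standing assumption). Since $\mu_0<\mu_1$,
\[
p_0(\beta_1)=p_1(\beta_1)+(\mu_0-\mu_1)\beta_1=(\mu_0-\mu_1)\beta_1<0 .
\]
As $p_0$ is negative strictly between its two roots and $\beta_1>1>0$ exceeds the negative root of $p_0$, this forces $\beta_1<\beta_0$.

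Next, I would rewrite $r-\mu_i$ using \eqref{eq:BetaQuadraticEquation}. Solving the quadratic for $\mu_i$ and simplifying,
\[
r-\mu_i=\frac{r\beta_i-r+\tfrac12\sigma^2\beta_i(\beta_i-1)}{\beta_i}=\frac{(\beta_i-1)(r+\tfrac12\sigma^2\beta_i)}{\beta_i},
\]
so that
\[
x_i^\ast=\frac{\beta_i}{\beta_i-1}(r-\mu_i)\,I=\bigl(r+\tfrac12\sigma^2\beta_i\bigr)\,I.
\]
This closed form is the main computational step; once it is in hand, the remaining inequalities are just consequences. From $\beta_0>\beta_1$ we immediately get $x_0^\ast>x_1^\ast$, and from $\beta_1>1>0$ we get $x_1^\ast=(r+\tfrac12\sigma^2\beta_1)I>rI$.

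There is no real obstacle here; the only mild subtlety is making sure the right branch of the quadratic is selected when comparing $\beta_0$ and $\beta_1$, which is handled by the sign of $p_0(\beta_1)$ together with the observation that both $\beta_i$ exceed $1$ while the other root of each $p_i$ is negative.
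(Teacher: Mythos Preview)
Your proof is correct and essentially matches the paper's approach: both derive the closed form $x_i^\ast=(r+\tfrac12\sigma^2\beta_i)I$ from \eqref{eq:BetaQuadraticEquation} and read off the inequalities from $\beta_0>\beta_1>1$. The only minor difference is in establishing $\beta_0>\beta_1$: the paper solves \eqref{eq:BetaQuadraticEquation} for $\mu_i$ and factors $\mu_1-\mu_0=\bigl(\tfrac{r}{\beta_0\beta_1}+\tfrac12\sigma^2\bigr)(\beta_0-\beta_1)$, whereas you use a sign comparison via $p_0(\beta_1)<0$; both are equally direct.
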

	
	\begin{proof}
		By equation \eqref{eq:BetaQuadraticEquation}, we have $\mu_i = \frac{r}{\beta_i} - \frac{1}{2} \sigma^2 (\beta_i -1 )$
		for $i \in \{0,1\}$, which implies
		\begin{displaymath}
			0 
			< \mu_1-\mu_0 
			= \left( \frac{r}{\beta_0 \beta_1} + \frac{1}{2} \sigma^2 \right) (\beta_0 - \beta_1).
		\end{displaymath}
		Since $\frac{r}{\beta_0 \beta_1} + \frac{1}{2} \sigma^2 > 0$, we obtain $\beta_0 > \beta_1$. 
		Moreover, we have
		\begin{displaymath}
			x^*_i = \frac{\beta_i}{\beta_i-1} (r-\mu_i) I
			= \frac{\beta_i}{\beta_i-1} \left(r- \frac{r}{\beta_i} + \frac{1}{2} \sigma^2 (\beta_i -1 ) \right) I
			= \left( r + \frac{1}{2}\sigma^2 \beta_i \right) I
		\end{displaymath}
		for $i \in \{0,1\}$. 
		Hence, $\beta_0 > \beta_1>1$ implies $x^*_0 > x^*_1\ge rI$.
	\end{proof}
	
	\begin{lemma}
		\label{lem:LowerBoundDecreasing}
		The function $\underline{b}$ from \eqref{eq:LowerBoundForBoundaryFunction} is decreasing with $\underline{b}([0,1]) = [x_1^*,x_0^*]$.
	\end{lemma}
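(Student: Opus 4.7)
The plan is to reduce the formula for $\underline{b}$ to a weighted harmonic mean of the two known thresholds $x_0^*$ and $x_1^*$, from which monotonicity and the image are immediate. The key algebraic simplification is the identity $(\beta_i - 1)/(r - \mu_i) = \beta_i I / x_i^*$, which is just a rearrangement of the defining formula $x_i^* = \frac{\beta_i}{\beta_i - 1}(r - \mu_i) I$ for the thresholds of the problems with known drift.

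Substituting this identity into the denominator of the definition of $\underline{b}$ and cancelling a factor of $I$, the formula reduces to
\begin{equation*}
\underline{b}(\pi) = \frac{\beta_0(1-\pi) + \beta_1 \pi}{\beta_0(1-\pi)/x_0^* + \beta_1 \pi / x_1^*},
\end{equation*}
so that
\begin{equation*}
\frac{1}{\underline{b}(\pi)} = \lambda(\pi)\frac{1}{x_0^*} + (1-\lambda(\pi))\frac{1}{x_1^*}, \qquad \lambda(\pi) := \frac{\beta_0(1-\pi)}{\beta_0(1-\pi) + \beta_1 \pi}.
\end{equation*}
The weight $\lambda$ strictly decreases from $\lambda(0) = 1$ to $\lambda(1) = 0$ (writing it as $1/(1 + \beta_1 \pi / [\beta_0(1-\pi)])$ shows this at a glance, since the inner ratio strictly increases from $0$ to $\infty$ on $[0,1)$). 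Because $x_0^* > x_1^*$ by Lemma~\ref{lem:beta_0_beta_1}, one has $1/x_0^* < 1/x_1^*$, so $1/\underline{b}$ is a strictly increasing continuous convex combination travelling from $1/x_0^*$ at $\pi = 0$ to $1/x_1^*$ at $\pi = 1$. Taking reciprocals, $\underline{b}$ is continuous and strictly decreasing on $[0,1]$ with endpoints $\underline{b}(0) = x_0^*$ and $\underline{b}(1) = x_1^*$, and the intermediate value theorem gives $\underline{b}([0,1]) = [x_1^*, x_0^*]$.

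There is no real obstacle beyond spotting the harmonic-mean rewriting; once the denominator of $\underline{b}$ is re-expressed via the reciprocals $1/x_i^*$, the entire statement reduces to monotonicity of a convex combination with a monotone weight.
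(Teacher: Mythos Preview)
Your proof is correct and takes a genuinely different route from the paper. The paper computes the normalized position
\[
\frac{\underline{b}(\pi)-x_1^*}{x_0^*-x_1^*}
\]
and, after some algebra, simplifies it to $\dfrac{1-\pi}{1-\pi+\pi\,c}$ with $c=\tfrac{\beta_1-1}{\beta_0-1}\tfrac{r-\mu_0}{r-\mu_1}$; it then invokes the quadratic equation \eqref{eq:BetaQuadraticEquation} to rewrite $\tfrac{r-\mu_i}{\beta_i-1}=\tfrac12\sigma^2+\tfrac{r}{\beta_i}$ and deduce $c<1$, from which monotonicity is read off. Your argument bypasses this entirely: the identity $(\beta_i-1)/(r-\mu_i)=\beta_i I/x_i^*$ turns $\underline{b}$ into a weighted harmonic mean of $x_0^*$ and $x_1^*$ with weight $\lambda(\pi)$, and monotonicity follows from the evident monotonicity of $\lambda$ together with $x_0^*>x_1^*$ from Lemma~\ref{lem:beta_0_beta_1}. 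Your approach is shorter and never touches \eqref{eq:BetaQuadraticEquation}; the paper's computation, on the other hand, yields an explicit affine-fractional formula for the position of $\underline{b}(\pi)$ between the two thresholds, which could be handy for later quantitative estimates.
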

	
	\begin{proof}
		We fix $\pi \in [0,1]$ and compute
		\begin{align*}
			\frac{\underline{b}(\pi)-b(1)}{b(0) - b(1)}
			&= \frac{ \left[ (1-\pi) \beta_0 + \pi \beta_1 \right] - \beta_1 \left[ (1-\pi)\frac{\beta_0-1}{\beta_1-1}\frac{r-\mu_1}{r-\mu_0} + \pi \right] }{ \left[ (1-\pi)\frac{\beta_0-1}{r-\mu_0} + \pi \frac{\beta_1-1}{r-\mu_1} \right] \left[ \frac{\beta_0}{\beta_0-1} (r-\mu_0) - \frac{\beta_1}{\beta_1-1} (r-\mu_1) \right]} \\ 
			&= \frac{ (1-\pi) \left[ \beta_0 - \beta_1 \frac{\beta_0-1}{\beta_1-1}\frac{r-\mu_1}{r-\mu_0} \right] }{ (1-\pi) \left[ \beta_0 - \beta_1 \frac{\beta_0-1}{\beta_1-1}\frac{r-\mu_1}{r-\mu_0} \right] + \pi \left[ \beta_0 \frac{\beta_1-1}{\beta_0-1}\frac{r-\mu_0}{r-\mu_1} - \beta_1 \right] } = \frac{ 1-\pi }{ 1-\pi + \pi   \frac{\beta_1-1}{\beta_0-1}\frac{r-\mu_0}{r-\mu_1} }.
		\end{align*}
		From \eqref{eq:BetaQuadraticEquation}, we have for $i \in \{0,1\}$ that
		$- \mu_i \beta_i = \frac{1}{2} \sigma^2 \beta_i (\beta_i-1) - r$ and hence
		\begin{displaymath}
			\frac{r-\mu_i}{\beta_i-1} 
			= \frac{r \beta_i -\mu_i \beta_i}{(\beta_i-1)\beta_i}
			= \frac{\frac{1}{2}  \sigma^2 \beta_i (\beta_i-1) + r (\beta_i-1)}{(\beta_i-1)\beta_i}
			= \frac{1}{2}\sigma^2 + \frac{r}{\beta_i}.
		\end{displaymath}
		Since $\beta_0 > \beta_1$, it holds $\frac{\beta_1-1}{\beta_0-1}\frac{r-\mu_0}{r-\mu_1} = \frac{\frac{1}{2}\sigma^2 + \frac{r}{\beta_0}}{\frac{1}{2}\sigma^2 + \frac{r}{\beta_1}} < 1$. 
		Thus, $\underline{b}$ is decreasing.
		Noting that $\underline{b}$ is continuous and that $\underline{b}(0) = x^*_0$ and $\underline{b}(1) = x^*_1$, we finally obtain $\underline{b}([0,1]) = [x^*_1,x^*_0]$.
	\end{proof}
	
\end{document}